\newtheorem{theorem}{Theorem}[section]
\newtheorem{lemma}[theorem]{Lemma}
\newtheorem{prop}[theorem]{Proposition}
\newtheorem{proposition}[theorem]{Proposition}
\newtheorem{corollary}[theorem]{Corollary}
\newtheorem{definition}[theorem]{Definition}
\newtheorem{notation}[theorem]{Notation}
\newtheorem{facts}[theorem]{Facts}
\newtheorem{fact}[theorem]{Fact}
\newtheorem{setup}[theorem]{Setup}
\newtheorem{question}[theorem]{Question}
\theoremstyle{definition}
\newtheorem{example}[theorem]{Example}
\newtheorem{remarks}[theorem]{Remarks}
\newtheorem{remark}[theorem]{Remark}
\newcommand{\holom}[3]{\ensuremath{#1:#2  \rightarrow #3}}
\def\PP{\mathbb{P}}
\def\CC{\mathbb{C}}
\def\ZZ{\mathbb{Z}}
\def\cA{{\mathcal A}}
\def\cF{{\mathcal F}}
\def\cM{{\mathcal M}}
\def\cU{{\mathcal U}}
\def\cV{{\mathcal V}}
\def\cW{{\mathcal W}}
\def\fg{{\mathfrak g}}
\def\fso{{\mathfrak{so}}}\def\fsl{{\mathfrak{sl}}}
\def\fg{{\mathfrak g}}
\DeclareMathOperator{\rs}{{rs}}
\DeclareMathOperator{\Aut}{Aut}
\DeclareMathOperator{\End}{End}
\DeclareMathOperator{\SL}{SL}
\DeclareMathOperator{\GL}{GL}
\DeclareMathOperator{\SO}{SO}
\DeclareMathOperator{\Spin}{Spin}
\DeclareMathOperator{\Clif}{Clif}
\DeclareMathOperator{\Pin}{Pin}
\DeclareMathOperator{\id}{id}
\DeclareMathOperator{\diag}{diag}
\DeclareMathOperator{\Nm}{Nm}
\DeclareMathOperator{\SC}{SC}
\def\and#1{\textcolor{red}{{\bf *** And:} #1 {\bf ***}}}
\newcommand{\Z}{\ensuremath{\mathbb{Z}}}
\newcommand{\C}{\ensuremath{\mathbb{C}}}
\newcommand\sO{{\mathscr O}}
\DeclareMathOperator{\MG}{\mathcal M_G}
\author{Vladimiro Benedetti}
\author{Andreas H\"oring}
\author{Jie Liu}
\address{Vladimiro Benedetti, Universit\'e C\^ote d'Azur, CNRS, LJAD, France}
\email{vladimiro.benedetti@univ-cotedazur.fr}
\address{Andreas H\"oring, Universit\'e C\^ote d'Azur, CNRS, LJAD, France}
\email{Andreas.Hoering@univ-cotedazur.fr}
\address{Jie Liu, Institute of Mathematics, Academy of Mathematics and Systems Science, Chinese Academy of Sciences, Beijing, 100190, China}
\email{jliu@amss.ac.cn}
\title[Hitchin morphism for intersections of two quadrics]{Intersection of two quadrics: modular interpretation and Hitchin morphism}
\date{June 5th, 2025}
\subjclass[2000]{14H60, 14D20, 14J45, 14M10}
\keywords{Lagrangian fibration, Hitchin morphism, Spin bundles}
\begin{document}

\begin{abstract}
The cotangent bundle $T^*X$ of a smooth intersection $X$ of two quadrics admits a Lagrangian fibration determined by the intrinsic geometry of $X$. We show that this fibration is actually the Hitchin morphism if we endow $X$ with a structure of moduli space of twisted $\Spin$-bundles. This generalises the classical result for threefolds, in which case it recovers the Hitchin fibration for the moduli space of rank two bundles with fixed determinant of odd degree on a curve of genus two.
\end{abstract}

\maketitle 


\section{Introduction}

A classical result of  P.~Newstead \cite[Theorem 1]{Newstead1968} says that a smooth complete intersection $X\subset \PP^5$ of two quadrics is isomorphic to a moduli space $\cU_C(2,L)$, where $C$ is a smooth projective curve of genus $2$ and $L$ is a line bundle of odd degree over $C$ (see also \cite{DesaleRamanan1976,Bhosle1984,LiZhang2024}). In particular, the Hitchin morphism then yields a Lagrangian fibration $T^*X\rightarrow \CC^3$. 

The same phenomenon persists for higher-dimensional smooth complete intersections of two quadrics, as established in \cite{BeauvilleEtesseHoeringLiuVoisin2024} without recourse to the Hitchin morphism, relying solely on the intrinsic geometry of $X$. Note that the algebra of symmetric tensors $H^0(X,S^{\bullet} T_X)$ of a projective manifold $X$ can be canonically identified to the ring of regular functions on $T^*X$. We recall:

 \begin{theorem}[\protect{\cite[Theorem 1.1]{BeauvilleEtesseHoeringLiuVoisin2024}}]
 	\label{thm.BEHLV}
    Let $X\subset \PP^{n+2}$ be a smooth complete intersection of two quadrics, $n\geq 2$.
     The natural morphism 
     \[
     \Phi_X\colon T^*X \longrightarrow (H^0(X,S^2 T_X))^*\cong \CC^{n}
     \]
     is a Lagrangian fibration.
 \end{theorem}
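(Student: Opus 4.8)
The strategy is to reduce the assertion to three facts about symmetric tensors on $X$ and then to conclude formally. The facts are: (i) $\dim H^0(X,S^2T_X)=n$; (ii) the $n$ resulting fibrewise‑quadratic functions on $T^*X$ are algebraically independent, so that $\Phi_X$ is dominant (in fact surjective) onto $\CC^n$; and (iii) $H^0(X,S^3T_X)=0$. Granting these, the conclusion is immediate: the Poisson bracket attached to the canonical symplectic form on $T^*X$ maps $S^aT_X\otimes S^bT_X$ into $S^{a+b-1}T_X$, so for any $\sigma,\tau\in H^0(X,S^2T_X)$ one has $\{\sigma,\tau\}\in H^0(X,S^3T_X)=0$; thus the $n$ components of $\Phi_X$ Poisson‑commute. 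Being moreover algebraically independent, a general fibre $F$ of $\Phi_X$ is smooth of dimension $2n-n=n$; at a general point of $F$ the Hamiltonian vector fields of these components are tangent to $F$ and span a subspace of $TF$ equal to the symplectic orthogonal of $TF$, so $F$ is coisotropic of dimension $\tfrac12\dim T^*X$, i.e.\ Lagrangian. Hence $\Phi_X$ is a Lagrangian fibration (no properness is asserted; the general fibre is a Lagrangian subvariety).

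For (ii) and the lower bound $\dim H^0(X,S^2T_X)\ge n$ I would argue by explicit construction. Write $X=Q_0\cap Q_1\subset\PP^{n+2}$; restricting the two quadrics yields the two components $q_0,q_1\in H^0(X,S^2\Omega^1_X\otimes\mathcal O_X(2))$ of the second fundamental form of $X$. Pointwise, the adjugate of the pencil $\lambda q_0+\mu q_1$ is a symmetric bivector, homogeneous of degree $n-1$ in $(\lambda,\mu)$, and the bookkeeping of twists, using $\det T_X=\omega_X^{-1}=\mathcal O_X(n-1)$, shows that the resulting $(\det T_X)^{-2}\otimes\mathcal O_X(2)^{\otimes(n-1)}$‑twist is trivial; hence $\operatorname{adj}(\lambda q_0+\mu q_1)$ is an honest global section of $S^2T_X$, polynomial of degree $n-1$ in $(\lambda,\mu)$, whose coefficients give $n$ sections $B_0,\dots,B_{n-1}\in H^0(X,S^2T_X)$. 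Up to a linear automorphism of the target, $\Phi_X$ then sends $(x,\xi)$ to the tuple of coefficients of the binary form $(\lambda:\mu)\mapsto\operatorname{adj}(\lambda q_0(x)+\mu q_1(x))(\xi,\xi)$. At a general $x\in X$ one may simultaneously diagonalise $q_0(x)=\mathrm{Id}$ and $q_1(x)=\diag(a_1,\dots,a_n)$ with the $a_i$ pairwise distinct, so that this binary form equals $\sum_i\xi_i^2\prod_{j\ne i}(\lambda+\mu a_j)$. Since the polynomials $\prod_{j\ne i}(\lambda+\mu a_j)$ are, up to scalars, the Lagrange basis of the $n$‑dimensional space of binary forms of degree $n-1$, letting $\xi$ vary shows at once that $B_0,\dots,B_{n-1}$ are linearly independent (so $\dim H^0(X,S^2T_X)\ge n$) and that $\Phi_X$ restricted to a single cotangent fibre $T^*_xX$ is already surjective onto $\CC^n$; in particular (ii) holds.

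It remains to establish the cohomological input: the reverse inequality $\dim H^0(X,S^2T_X)\le n$ together with $H^0(X,S^3T_X)=0$, and this is the main obstacle. In fact one can aim for the sharper statement that the full algebra of symmetric tensors $\bigoplus_{k\ge0}H^0(X,S^kT_X)$ is a polynomial ring on $n$ generators placed in degree $2$, which yields (i) and (iii) simultaneously and explains why the target of $\Phi_X$ is exactly $\CC^n$. I would obtain this from the Koszul resolution $0\to\mathcal O_{\PP}(-4)\to\mathcal O_{\PP}(-2)^{\oplus2}\to\mathcal O_{\PP}\to\mathcal O_X\to0$ twisted by $S^kT_{\PP^{n+2}}$ (whose cohomology is computed by Bott's formulas), combined with the symmetric powers of the tangent‑bundle sequence $0\to T_X\to T_{\PP}|_X\to\mathcal O_X(2)^{\oplus2}\to0$, which filter $S^kT_{\PP}|_X$ with graded pieces $S^{k-p}T_X\otimes\mathcal O_X(2p)$: an induction on $k$ that simultaneously tracks the auxiliary groups $H^\bullet(X,S^aT_X(b))$ then pins down the low‑degree pieces $H^0(X,S^2T_X)$ and $H^0(X,S^3T_X)$. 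Once this is in place, the formal argument of the first paragraph shows that $\Phi_X\colon T^*X\to\CC^n$ is a Lagrangian fibration.
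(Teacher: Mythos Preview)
This theorem is not proved in the present paper: it is quoted from \cite{BeauvilleEtesseHoeringLiuVoisin2024} as input, so there is no ``paper's own proof'' to compare against. What the paper does use from that source is the explicit basis $s_j=\sum_{k\ne j}\frac{(x_j\partial_k-x_k\partial_j)^2}{\lambda_k-\lambda_j}$ of $H^0(X,S^2T_X)$ (their Proposition~7.4) and the geometric description of the projectivised map $\varphi_X$ in terms of degenerate members of the restricted pencil (their Proposition~3.2).

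Your strategy is the right one and matches the architecture of the cited paper: reduce to (i) $\dim H^0(X,S^2T_X)=n$, (ii) independence/surjectivity, (iii) $H^0(X,S^3T_X)=0$, then conclude by the standard Poisson/Liouville argument. Your adjugate construction is a correct and elegant way to produce the $n$ sections, and your twist bookkeeping is right: with $q\in S^2\Omega^1_X\otimes\sO_X(2)$ one gets $\operatorname{adj}(q)\in S^2T_X\otimes\omega_X^2\otimes\sO_X(2n-2)$, and $\omega_X=\sO_X(-(n-1))$ cancels the twist. Your pointwise diagonalisation argument for (ii) is also fine. Note that your $B_i$ and the $s_j$ of \cite{BeauvilleEtesseHoeringLiuVoisin2024} are related by a linear change of basis, since both are coefficient systems of the same binary form of degree $n-1$ in $(\lambda,\mu)$ expanded in two different bases; so the two constructions are equivalent, and neither gains over the other in this respect.

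The genuine gap, which you correctly flag, is the cohomological input: the upper bound $\dim H^0(X,S^2T_X)\le n$ and the vanishing $H^0(X,S^3T_X)=0$. Your proposed induction via the Koszul resolution of $\sO_X$ and the symmetric powers of $0\to T_X\to T_{\PP}|_X\to\sO_X(2)^{\oplus 2}\to 0$ is in principle workable, but be aware that the filtration on $S^k T_{\PP}|_X$ only yields long exact sequences linking the graded pieces, and the relevant $H^1$-terms do not vanish for free; one has to track $H^q(X,S^aT_X(b))$ for several values of $(a,b,q)$ simultaneously, and the argument is substantially more delicate than a single Bott computation. This is exactly the content of the hard part of \cite{BeauvilleEtesseHoeringLiuVoisin2024}, so at this level of detail your sketch should be regarded as an outline rather than a proof.
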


If $X$ is a surface \cite{BiswasHollaKumar2010,Casagrande2015,KL22} or threefold \cite[Sect.2]{BeauvilleEtesseHoeringLiuVoisin2024} it has a well-known structure of moduli space\footnote{We recommend \cite{DPS24} and \cite{Hit24} for a thorough investigation involving the threefold in $\PP^5$.} proving a modular interpretation of the Lagrangian fibration .
Thus it is natural to ask:
\begin{question}
	\label{q.QBEHL}
	Does $X$ admit a structure of modular space of principal bundles
    for which $\Phi_X$ is the Hitchin morphism?
\end{question}

In this paper we use the structure of moduli space of twisted Spin-bundles
for the case of odd-dimensional complete intersections \cite{Ram1981}
to give a complete answer to Question \ref{q.QBEHL} in every dimension.

\subsection{General background}

Let $C$ be a smooth projective curve of genus $g\geq 2$. Let $\cU_C(r,L)$ be the moduli space of semistable bundles of rank $r$ with a fixed determinant $L$ of degree $d$ over $C$ such that $(r,d)=1$. Let $[F]\in \cU_C(r,L)$ be a stable vector bundle over $C$. Then the cotangent space $T^*_{\cU_C(r,L),[F]}$ can be naturally identified to $H^0(C,\End_0(F)\otimes K_C)$, which consists of traceless $K_C$-valued endomorphisms of $F$. Let 
\[
\cA\coloneqq H^0(C,K_C^2)\oplus \dots \oplus H^0(C,K_C^r)
\]
be the \emph{Hitchin base},
then  the \emph{Hitchin morphism} is defined as
\[
h_{\cU_C(r,L)}\colon T^*\cU_C(r,L)\rightarrow \cA \qquad
([F],\theta) \longmapsto (s_2(\theta),\dots,s_r(\theta)),
\]
where $\theta\in H^0(C,\End_0(F)\otimes K_C)$ and $s_j(\theta)$'s are the coefficients of the characteristic polynomial of $\theta$, i.e. $s_j(\theta) \coloneqq (-1)^j \operatorname{tr}(\wedge^j \theta)$. 
N.~Hitchin proved in \cite{Hit87} that $h_{\cU_C(r,L)}$ is actually a Lagrangian fibration for the natural symplectic form on the cotangent bundle.

While a complete intersection $X$ of two quadrics may not be contained
in a moduli space of $\mbox{GL}_n$-bundles, Ramanan \cite{Ram1981} observed that for $X$ of odd dimension there is an embedding
into a moduli space of twisted $\mbox{Spin}$-bundles. We will use this modular structure  to answer Question \ref{q.QBEHL} in odd dimension, in a second
step we show how to extend the construction in even dimension.

\subsection{Odd dimensional case} \label{intro-odd-dimensional}

 Firstly we consider the odd dimensional case, i.e. $n=2g-1$ for some $g\geq 2$. In suitable coordinates, we can assume that $X$ is given by the following two equations:
 \[
 q_1\coloneqq \sum_{j=0}^{2g+1} x_j^2 \quad \text{and}\quad q_2\coloneqq \sum_{j=0}^{2g+1} \lambda_j x_j^2.
 \]
We can associate to $X$ a hyperelliptic curve $\pi\colon C\rightarrow \PP^1$ of genus $g$, see Setup \ref{setup-odd}, and denote by $i$ the covering involution. Denote by $p_0,\dots,p_{2g+1}\in C$ the Weiertra\ss\ points and denote by $\Delta\subset \PP^1$ the branch locus with $r_j\coloneqq \pi(p_j)$. Let $h$ be the pull-back $\pi^*\sO_{\PP^1}(1)$ and let $\alpha\coloneq h^{g-1}\otimes \sO_{C}(p_{2g+1})$. 

Let $\cM$ be the moduli space of semistable twisted $\Spin_{2g}$-bundles over $C$ such that the associated orthogonal bundles are $i$-invariant with fixed type $\tau=(1^{2g+1},2g-1)$ (see Notation \ref{notation:MandU} for the precise definition of $\cM$). S.~Ramanan proved in \cite[Theorem 3]{Ram1981} (see also \S\,\ref{section:twoquadrics}) that there exists an isomorphism (determined by $\alpha$)
\begin{equation}
	\label{eq.Isom-X-M}
	X\stackrel{\cong}{\longrightarrow} \cM
\end{equation}
such that the natural involution $i\colon \cM\rightarrow \cM$, $[\cF]\mapsto[i^*\cF]$, corresponds to the involution of $X$ by changing the sign of the coordinate $x_{2g+1}$.

The moduli space $\cM$ admits a natural forgetful map to the moduli space
of $i$-invariant orthogonal bundles, a key step is to use this structure to describe the cotangent space:

 \begin{prop}[\protect{= Proposition \ref{prop.Cotangent-cM}}]
 	\label{prop:intro-cotangent-cM}
 	Let $[\cF]\in \cM$ be a point with the associated orthogonal vector bundle $(F,q)$. Then we have a canonical isomorphism
 	\[
 	T^*_{\cM,[\cF]} \cong H^0(C,\wedge^2 F\otimes K_C)^+.
 	\]
    where the superscript indicates the $+1$-eigenspace with respect to the $i$-action.  
 \end{prop}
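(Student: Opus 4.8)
The plan is to identify the tangent space to $\cM$ at $[\cF]$ with a first cohomology group by deformation theory, then dualize. Since $\cM$ is a moduli space of twisted $\Spin_{2g}$-bundles whose associated orthogonal bundles are $i$-invariant of fixed type $\tau$, its tangent space at $[\cF]$ is governed by infinitesimal deformations of $\cF$ preserving these constraints. The $\Spin$-bundle $\cF$ has adjoint Lie algebra bundle $\ad(\cF)\cong \mathfrak{so}(F,q)\cong \wedge^2 F$ (using the quadratic form $q$ to identify $\mathfrak{so}(F,q)$ with $\wedge^2 F$), so unconstrained infinitesimal deformations are classified by $H^1(C,\wedge^2 F)$. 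First I would argue that the $i$-invariance of the orthogonal bundle with \emph{fixed} discrete type $\tau$ imposes exactly that the deformations be $i$-equivariant — the type is a discrete invariant, hence locally constant under deformation — so the tangent space is the $+1$-eigenspace $H^1(C,\wedge^2 F)^+$ for the involution induced by $i$ on cohomology. (One should be careful that passing from $\Spin$ to $\SO$ is an isogeny, so it induces an isomorphism on the relevant $H^1$ with coefficients in the adjoint bundle; and that the moduli problem is twisted, but twisting does not change the infinitesimal structure.)

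Next I would apply Serre duality on the curve $C$: since $\wedge^2 F$ is self-dual via $q$ (the form $q$ on $F$ induces a perfect pairing on $\wedge^2 F$, as $\wedge^2 F \cong (\wedge^2 F)^*$ through $q$ together with $\det F$ trivial for a $\Spin$-bundle), we get
\[
H^1(C,\wedge^2 F)^* \cong H^0(C, (\wedge^2 F)^*\otimes K_C) \cong H^0(C,\wedge^2 F\otimes K_C).
\]
The involution $i$ acts compatibly on both sides (Serre duality is functorial with respect to the $i$-action, which covers the automorphism $i$ of $C$), and an eigenspace decomposition is interchanged under duality in the expected way, so
\[
\bigl(H^1(C,\wedge^2 F)^+\bigr)^* \cong H^0(C,\wedge^2 F\otimes K_C)^+.
\]
This yields the claimed canonical isomorphism $T^*_{\cM,[\cF]}\cong H^0(C,\wedge^2 F\otimes K_C)^+$. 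I would also note that the identification is canonical once one fixes the isomorphism $\mathfrak{so}(F,q)\cong\wedge^2 F$ coming from $q$, which is part of the data of a point of $\cM$.

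The main obstacle I anticipate is the first step: making precise that the tangent space to $\cM$ — as a moduli space of \emph{twisted} $\Spin$-bundles with an $i$-invariance condition and a fixed type — is exactly $H^1(C,\wedge^2 F)^+$, and not something larger or smaller. One must check that the type constraint $\tau=(1^{2g+1},2g-1)$ is genuinely open/closed (so it imposes no further infinitesimal conditions beyond $i$-equivariance), that the forgetful map to the moduli of $i$-invariant orthogonal bundles is étale or at least an isomorphism on tangent spaces (the $\Spin$-to-$\SO$ issue), and that the $i$-action on $\cM$ of \eqref{eq.Isom-X-M} is the one whose fixed-point/eigenspace data gives the $+$-part. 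A clean way to handle this is to use the forgetful map to $i$-invariant orthogonal bundles featured in the statement: deformations of an $i$-invariant pair $(F,q)$ as an $i$-invariant orthogonal bundle are $H^1(C,\wedge^2 F)^+$ directly (equivariant deformation theory), and the $\Spin$-structure plus type are rigid relative to this, so the tangent spaces agree. The rest is a routine, if slightly delicate, Serre-duality bookkeeping of eigenspaces.
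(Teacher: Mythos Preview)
Your approach is genuinely different from the paper's, and it is worth contrasting them.

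The paper does \emph{not} argue via equivariant deformation theory of $i$-invariant orthogonal bundles. Instead it leverages Ramanan's isomorphism $\cM\cong X$ directly: for $[\cF]$ corresponding to $[V]\in X$, Lemma~\ref{lem_trivial_factor} identifies $T_{X,[V]}$ with $H^0(C,N)\otimes V^*$ (where $N=\pi^*\widetilde N$ is the explicit bundle built from the pencil). The heart of the proof is Proposition~\ref{prop.perfect-pairing}, which constructs a concrete bilinear pairing
\[
\Psi_x\colon H^0(C,N)\times H^0(C,\wedge^2 F\otimes K_C)^+\longrightarrow H^0(C,E)^-=V,\qquad (\sigma,\theta)\longmapsto \theta\circ\sigma,
\]
and shows it is perfect by a rank argument on skew forms (a skew map killing the codimension-one image of global sections has rank $\leq 1$, hence rank $0$). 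This yields the isomorphism at every point of $\cM$, uniformly, because it rests only on $X$ being smooth and on the explicit bundle $N$.

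Your deformation-theoretic route is the ``expected'' one and is morally correct, but the gap you flag is real and not merely cosmetic. You need $T_{\cM,[\cF]}\cong H^1(C,\wedge^2 F)^+$ at \emph{every} point of $\cM$, including those where $(F,q)$ is not regularly stable (cf.\ Lemma~\ref{lem.stability-F}) and where $\nu$ might fail to be \'etale; at such points the identification of the tangent space to the coarse moduli space with the equivariant deformation space is not automatic. The paper sidesteps this entirely by working on the $X$-side. More importantly for the sequel, the paper's proof does not just produce \emph{an} isomorphism: it produces the specific pairing $\Psi_x$, and this concrete formula is exactly what is used later (via the map $L_\theta$ in \eqref{eq.L-theta}, Lemma~\ref{lem.rank-Higgs}, Proposition~\ref{prop.degen-theta}, and Proposition~\ref{prop.Commutativity}) to compare $h_\cM$ with $\Phi_X$. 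Your Serre-duality isomorphism, even once made rigorous, would still leave you the task of checking that it agrees with $\Psi_x$ before the rest of the argument can proceed.
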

 
The Hitchin morphism for $\cM$ is therefore defined as follows (cf. Remark \ref{remark-natural}):
 \[
 \begin{tikzcd}[row sep=tiny]
 	 h_{\cM}\colon T^*\cM  \arrow[r]
 	     & \cA_{\Spin_{2g}}\coloneqq \bigoplus_{j=1}^{g-1} H^0(C,K_C^{2j}) \oplus H^0(C,K_C^g) \\
 	([\cF],\theta)\arrow[r,mapsto]
 	     & (\operatorname{tr}(\wedge^2\theta),\dots,\operatorname{tr}(\wedge^{2g-2}\theta),\operatorname{Pf}(\theta))
 \end{tikzcd}
 \]
 where $\theta\in H^0(C,\wedge^2 F\otimes K_C)^+$ is viewed as an element in $T^*_{\cM,[\cF]}$ under the isomorphism in Proposition \ref{prop:intro-cotangent-cM} and $\operatorname{Pf}(\theta)$ is the Pfaffian of $\theta$.

 \begin{theorem}
 	\label{mainthm.Odd-dimension}
        Let $X\subset \PP^{2g+1}$ be a smooth complete intersection of two quadrics and let $\cM$ be the corresponding moduli space in \eqref{eq.Isom-X-M}. Then the following holds: 
 	\begin{enumerate}
 		\item The image of $h_{\cM}$ is equal to 
        \[
        H^0(C,K_C^2)^+ \cong H^0(\PP^1, K_{\PP^1}^2\otimes \sO_{\PP^1}(\Delta)) \cong \CC^{2g-1}.
        \]
 		
 		\item The morphism $h_{\cM}$ coincides with the map $\Phi_X$ from Theorem \ref{thm.BEHLV} under the isomorphism \eqref{eq.Isom-X-M}.
 	\end{enumerate}
 \end{theorem}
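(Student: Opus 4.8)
The plan is to prove both parts simultaneously by identifying the Hitchin base components with spaces of sections on $\PP^1$ and then matching $h_{\cM}$ termwise against $\Phi_X$. The crucial observation is that the $i$-invariance forces a drastic collapse of the Hitchin base: a section $\theta\in H^0(C,\wedge^2 F\otimes K_C)^+$ has characteristic coefficients $\operatorname{tr}(\wedge^{2j}\theta)\in H^0(C,K_C^{2j})$ which are themselves $i$-invariant, hence pulled back from $\PP^1$; but $H^0(C,K_C^{2j})^+\cong H^0(\PP^1,K_{\PP^1}^{2j}\otimes\sO_{\PP^1}(j\Delta_{\mathrm{red}}))$-type spaces must be analysed against the genus constraint. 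For $j\geq 2$ one expects these to vanish once the bundle $F$ is of the specific form dictated by Ramanan's construction, and the Pfaffian term $\operatorname{Pf}(\theta)\in H^0(C,K_C^g)^+$ should also vanish; the only surviving component is $j=1$, giving $H^0(C,K_C^2)^+\cong H^0(\PP^1,K_{\PP^1}^2\otimes\sO_{\PP^1}(\Delta))\cong H^0(\PP^1,\sO_{\PP^1}(2g-2))\cong\CC^{2g-1}$.

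First I would make Ramanan's isomorphism explicit enough to write down the universal orthogonal bundle $(F,q)$ over $C\times X$ (or at least over $C\times\{x\}$ for $x\in X$), so that $\wedge^2 F\otimes K_C$ and its $i$-eigenspaces become computable; the type $\tau=(1^{2g+1},2g-1)$ and the twist $\alpha=h^{g-1}\otimes\sO_C(p_{2g+1})$ pin down $F$ up to the data parametrised by $X$, and the orthogonal structure identifies $\wedge^2 F$ with the adjoint-type bundle $\fso(F)$. Then I would compute $\operatorname{tr}(\wedge^{2j}\theta)$ degree-by-degree: since these land in $H^0(C,K_C^{2j})$ and are $i$-anti-invariant or $i$-invariant depending on parity interplay with the action on $F$, a careful bookkeeping of eigenspaces shows $h^0$ of the relevant eigen-subsheaf on $\PP^1$ is zero for $2\leq j\leq g-1$, and similarly for the Pfaffian in degree $g$. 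This is essentially a Riemann–Roch / vanishing computation on $\PP^1$ once the twists are identified, combined with the symplectic-dimension count $\dim\cM=n=2g-1$ which must equal $\dim\mathrm{Im}(h_{\cM})$ for a Lagrangian fibration, cross-checking the answer.

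For part (2), having established that both $h_{\cM}$ and $\Phi_X$ are morphisms $T^*X\to\CC^{2g-1}$, I would show they agree by a local/infinitesimal comparison. The map $\Phi_X$ of Theorem \ref{thm.BEHLV} is built from the quadratic symmetric tensors $H^0(X,S^2 T_X)$, and the point is that the dual pairing $\theta\mapsto\operatorname{tr}(\wedge^2\theta)$ restricted to $H^0(C,\wedge^2 F\otimes K_C)^+$, viewed on $T^*X$ via Proposition \ref{prop:intro-cotangent-cM}, reproduces exactly the pencil-of-quadrics structure: the associated hyperelliptic curve $C$ and its Weierstraß points $r_j=\pi(p_j)$ encode the discriminant $\Delta$ of the pencil $\{q_1,q_2\}$, and $K_{\PP^1}^2\otimes\sO_{\PP^1}(\Delta)$ is canonically the space of quadratic forms on the pencil vanishing with the right order. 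Concretely I would exhibit the isomorphism $H^0(C,\wedge^2 F\otimes K_C)^+\cong T^*_{X,x}$ as inducing, on symmetric squares, the identification $S^2 T^*_{X,x}\supset(\text{image})\cong H^0(X,S^2 T_X)^*$ compatibly, so that the quadratic map $\theta\mapsto\operatorname{tr}(\wedge^2\theta)$ is literally the moment-type map defining $\Phi_X$. Since both sides are $\SL_2$-equivariant for the natural action on the pencil and on $C$, and agree generically (e.g. on the open locus where $\theta$ is a regular element, where the spectral-curve description of $h_{\cM}$ matches the fibre description of $\Phi_X$ from \cite{BeauvilleEtesseHoeringLiuVoisin2024}), they coincide.

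I expect the main obstacle to be part (1)'s vanishing statements, i.e.\ showing that all higher Hitchin coefficients $\operatorname{tr}(\wedge^{2j}\theta)$ for $j\geq 2$ and the Pfaffian die on the $i$-invariant subspace. This requires knowing the universal bundle $F$ precisely — in particular how the involution $i$ acts on its fibres and how this interacts with the $K_C$-twist — and then a non-trivial eigenspace computation; a naive dimension count gives the right final answer but does not by itself prove the individual terms vanish rather than merely conspiring. A secondary difficulty in part (2) is making the abstract cotangent identification of Proposition \ref{prop:intro-cotangent-cM} sufficiently canonical to compare with the concrete symmetric-tensor description of $\Phi_X$; the cleanest route is probably to verify equality on the open dense locus of regular $\theta$ via spectral curves (where both maps are transparently the ``characteristic polynomial'' of the Higgs-type field) and then invoke continuity, rather than attempting a global identity of bundle maps from the outset.
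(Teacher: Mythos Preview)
Your proposal has a genuine gap in part (1). You suggest that the higher Hitchin coefficients vanish because ``$h^0$ of the relevant eigen-subsheaf on $\PP^1$ is zero for $2\leq j\leq g-1$''. But the target spaces do \emph{not} vanish: for every $j\geq 1$ one has
\[
H^0(C,K_C^{2j})^+ \;\cong\; H^0\bigl(\PP^1,K_{\PP^1}^{2j}\otimes\sO_{\PP^1}(j\Delta)\bigr)\;\cong\; H^0\bigl(\PP^1,\sO_{\PP^1}(2j(g-1))\bigr),
\]
which has dimension $2j(g-1)+1>0$. So no amount of eigenspace bookkeeping on the \emph{target} will make the higher components disappear; the vanishing of $\operatorname{tr}(\wedge^{2j}\theta)$ for $j\geq 2$ is a genuine constraint on the Higgs fields $\theta$ themselves, not on the space where their invariants live. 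The paper's mechanism is completely different: it proves (Lemma~\ref{lem.rank-Higgs}) that \emph{pointwise} the rank of $\theta(y)$ is at most two for every $y\in C$, whence $\wedge^{3}\theta=0$ identically and all higher traces and the Pfaffian vanish automatically. This rank bound is extracted from a non-degenerate pairing
\[
\Psi_x\colon H^0(C,N)\times H^0(C,\wedge^2 F\otimes K_C)^+\longrightarrow H^0(C,E)^-\cong\CC
\]
(Proposition~\ref{prop.perfect-pairing}), which identifies $H^0(C,\wedge^2 F\otimes K_C)^+$ with the dual of $H^0(C,N)\cong T_{X,x}\otimes V$: for $\theta\neq 0$ the kernel of $\sigma\mapsto\theta\circ\sigma$ has dimension $2g-2$, and its evaluation at $y$ sits inside $\ker\theta(y)$, forcing $\operatorname{rk}\theta(y)\leq 2$. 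Your outline never produces this pairing, nor any substitute for the rank bound.

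For part (2), your sketch (``agree on regular $\theta$ via spectral curves, invoke continuity, use $\SL_2$-equivariance'') is not wrong in spirit but does not engage with the actual identification. The paper makes the cotangent isomorphism of Proposition~\ref{prop:intro-cotangent-cM} concrete precisely via the pairing $\Psi_x$ above, and then compares $\bar h_{\cM}$ with the projectivised map $\varphi_X$ using the geometric description of $\varphi_X$ from \cite{BeauvilleEtesseHoeringLiuVoisin2024}: $\varphi_X([H])$ records the $2g-2$ points $t\in\PP^1$ where $q_t|_H$ degenerates. Proposition~\ref{prop.degen-theta} shows that these are exactly the points where $h_{\cM}(\theta)$ vanishes (using again the rank-two constraint and Lemma~\ref{lem.Nilpotent-orth-decomp} on skew-symmetric maps), and this matches the two maps as divisors in $|\sO_{\PP^1}(2g-2)|$. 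Without the pairing $\Psi_x$ and the ensuing rank bound you have no bridge between the moduli-theoretic $h_{\cM}$ and the quadric-pencil description of $\Phi_X$, so the comparison remains formal.
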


 \subsection{Even dimensional case}

Next we consider the even dimensional case $n=2g-2$, i.e. a smooth complete intersection of two quadrics $Y\subset \PP^{2g}$. Again in suitable coordinates, we can assume that $Y$ defined by the following two equations:
\[
q_1\coloneqq \sum_{j=0}^{2g} x_i^2 \quad \text{and}\quad q_2\coloneqq \sum_{j=0}^{2g} \lambda_j x_j^2,
\]
Then $Y$ can be naturally embedded into an odd dimensional smooth complete intersection of two quadrics $X\subset \PP^{2g+1}$ as in the previous subsection such that $Y=X\cap \{x_{2g+1}=0\}$. Let $\cM$ be the moduli space corresponding to $X$ in \eqref{eq.Isom-X-M} equipped the natural involution $i\colon \cM\rightarrow \cM$.

\begin{prop}[\protect{= Proposition \ref{prop.Cotangent-cMi}}]
	The variety $Y$ is identified to the $i$-fixed locus $\cM^i$ under the isomorphism \eqref{eq.Isom-X-M}. Moreover, for any point $[\cF]\in \cM^i$ with $(F,q)$ the associated orthogonal bundle, we have a canonical isomorphism
	\[
	T^*_{\cM^i,[\cF]} \cong H^0(C,\wedge^2 F\otimes K_C\otimes \sO_C(-p_{2g+1}))^+.
	\]
\end{prop}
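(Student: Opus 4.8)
The plan is to treat the two assertions in turn.

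\emph{Identification of the fixed locus.} Under the isomorphism \eqref{eq.Isom-X-M} the involution $i\colon\cM\to\cM$ corresponds to the linear involution $\s$ of $X\subset\PP^{2g+1}$ that changes the sign of $x_{2g+1}$, so $\cM^i$ is identified with $\Fix(\s|_X)$. The fixed locus of $\s$ on $\PP^{2g+1}$ is the disjoint union of the hyperplane $\{x_{2g+1}=0\}\cong\PP^{2g}$ and the point $e=[0:\cdots:0:1]$; since $q_1(e)=1\neq 0$ the point $e$ does not lie on $X$, so $\Fix(\s|_X)=X\cap\{x_{2g+1}=0\}=Y$. Hence $\cM^i\cong Y$, realised as a smooth divisor in $\cM\cong X$ cut out by $x_{2g+1}\in H^0(X,\sO_X(1))$; in particular its conormal bundle in $\cM$ is $\sO_Y(-1)$.

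\emph{The cotangent space.} Fix $[\cF]\in\cM^i$ with associated orthogonal bundle $(F,q)$. Since $\cM^i$ is the fixed locus of an involution of the smooth variety $\cM$, at $[\cF]$ the codifferential of $i$ is an involution of $T^*_{\cM,[\cF]}$, its $(+1)$-eigenspace is $T^*_{\cM^i,[\cF]}$, and its $(-1)$-eigenspace is $N^*_{\cM^i/\cM,[\cF]}$; by the previous paragraph the latter is one-dimensional, so $T^*_{\cM^i,[\cF]}$ is the codimension-one subspace of codifferential-invariant vectors. I would then substitute the canonical identification $T^*_{\cM,[\cF]}\cong H^0(C,\wedge^2 F\otimes K_C)^+$ of Proposition \ref{prop:intro-cotangent-cM} and compute, by revisiting the construction underlying that proposition, how the codifferential of $i$ acts on $H^0(C,\wedge^2 F\otimes K_C)^+$. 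The extra datum at a point of $\cM^i$ is an isomorphism $i^*\cF\cong\cF$; feeding its restriction to $(F,q)$ into that description, in place of the $i$-linearization entering the description of the $+$-part, should show that the codifferential-invariant vectors are precisely the sections vanishing at the Weierstra\ss\ point $p_{2g+1}$, giving
\[
T^*_{\cM^i,[\cF]}\cong H^0\bigl(C,\wedge^2 F\otimes K_C\otimes\sO_C(-p_{2g+1})\bigr)^{+}.
\]
For this last step it is enough to check the inclusion of $H^0(C,\wedge^2 F\otimes K_C\otimes\sO_C(-p_{2g+1}))^{+}$ into the codifferential-invariant subspace together with the equality of dimensions (both equal $\dim Y=2g-2$; equivalently, the evaluation map at $p_{2g+1}$ on the $(2g-1)$-dimensional space $H^0(C,\wedge^2 F\otimes K_C)^+$ has one-dimensional image). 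All of this reduces to a local analysis at the ramification points of $\pi$.

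\emph{Main obstacle.} The crux is exactly that local computation: making precise how the codifferential of $[\cF]\mapsto[i^*\cF]$ acts on $H^0(C,\wedge^2 F\otimes K_C)^+$ and checking that its invariants are the sections killed at $p_{2g+1}$, not at some other ramification point, nor at several of them. I would carry this out in the local model of a twisted $\Spin_{2g}$-bundle near each $p_j$, where the isomorphism $i^*\cF\cong\cF$ has a normal form dictated by the type $\tau=(1^{2g+1},2g-1)$ and $i$ acts on the local uniformiser, hence on $K_C$, by $-1$; the lone entry $2g-1$ of $\tau$, sitting in the slot attached under Ramanan's construction to the coordinate $x_{2g+1}$, is what makes the evaluation at $p_{2g+1}$ behave oppositely to those at $p_0,\dots,p_{2g}$ and produces the twist by $\sO_C(-p_{2g+1})$. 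A more structural alternative, which avoids the explicit signs, is to identify the conormal sequence $0\to\sO_Y(-1)\to\Omega_{\cM}|_{\cM^i}\to\Omega_{\cM^i}\to 0$ with the $i$-invariant part of the relative evaluation-at-$p_{2g+1}$ sequence for the universal orthogonal bundle over $\cM^i$, and then to match the two outer terms directly through Proposition \ref{prop:intro-cotangent-cM}.
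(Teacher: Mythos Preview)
Your identification of the fixed locus is correct and matches the paper's argument almost verbatim.

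For the cotangent space, however, your route diverges from the paper's and leaves a genuine gap. The paper does \emph{not} attempt to compute the codifferential of the involution on $H^0(C,\wedge^2 F\otimes K_C)^+$. The reason is that the isomorphism $T^*_{\cM,[\cF]}\cong H^0(C,\wedge^2 F\otimes K_C)^+$ in Proposition~\ref{prop:intro-cotangent-cM} is not obtained from deformation theory of $\Spin$-bundles, but from the non-degenerate pairing $\Psi_x\colon H^0(C,N)\times H^0(C,\wedge^2 F\otimes K_C)^+\to H^0(C,E)^-$ built out of the explicit construction $[V]\mapsto (F,q)$. Under this identification it is not at all transparent what the codifferential of $[\cF]\mapsto[i^*\cF]$ becomes, so your proposed ``local analysis at ramification points'' has no obvious model to analyse; the type $\tau$ governs the action on the fibres of $F$, not the action on the \emph{cohomology} pairing $\Psi_x$.

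The paper instead exploits $\Psi_x$ directly. The normal line $N_{Y/X,y}\otimes V$ is $\bar\ell$, the image of the $(2g{+}1)$-th coordinate line, so $T^*_{\cM^i,[\cF]}$ is the annihilator $L^\perp$ of $H^0(C,L)$ under $\Psi_y$, where $L=\pi^*(\bar\ell\otimes\sO_{\PP^1})\subset N$. The saturation $\bar L$ of $\iota(L)$ in $E\otimes h^{-(g-1)}$ is identified with $\sO_C(p_{2g+1})$, and $\bar L_{p_{2g+1}}=(E\otimes h^{-(g-1)})^-_{p_{2g+1}}$. Now fix $0\neq\theta\in L^\perp$; the paper shows $\theta(p_{2g+1})=0$ by the following rank--parity trick: since $[V]\in\{x_{2g+1}=0\}$, the evaluation $H^0(C,E)^-\to E_{p_{2g+1}}^-$ vanishes, so $\iota(N_{p_{2g+1}})$ (of dimension $2g-1$) lies in $\ker\theta(p_{2g+1})$; by $\theta\in L^\perp$ also $\bar L_{p_{2g+1}}\subset\ker\theta(p_{2g+1})$, and this line is \emph{not} contained in $\iota(N_{p_{2g+1}})$ because $L_{p_{2g+1}}=\ker\iota_{p_{2g+1}}$; hence $\theta(p_{2g+1})$ has rank $\leq 1$ and so rank $0$ by skew-symmetry. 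Conversely, a separate lemma produces some $\theta$ with $\theta(p_{2g+1})\neq 0$, so $L^\perp=H^0(C,\wedge^2F\otimes K_C\otimes\sO_C(-p_{2g+1}))^+$ by dimension.

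Two further points where your sketch would need repair. First, your parenthetical ``equivalently, the evaluation map at $p_{2g+1}$ \ldots\ has one-dimensional image'' is not a formality: the target $(\wedge^2 F\otimes K_C)^+_{p_{2g+1}}$ has dimension $2g-1$, so surjectivity of evaluation onto a one-dimensional image is precisely the nontrivial lemma the paper proves (using an orthogonal splitting $F=F'\oplus\sO_C$ coming from $\bar L$, and showing $\pi_*(F'\otimes K_C)^+$ has a trivial summand). Second, your ``structural alternative'' of matching the conormal sequence with an evaluation sequence for the universal orthogonal bundle presupposes exactly the compatibility between $\Psi_x$ and evaluation at $p_{2g+1}$ that the paper's argument establishes; it is not an independent shortcut.
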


The Hitchin morphism $h_{\cM}$ can then be naturally restricted to $T^*\cM^i$, with the resulting morphism denoted by $h_{\cM^i}$, and we obtain the analogoue of Theorem \ref{mainthm.Odd-dimension} in the even dimensional case.

\begin{theorem}
	\label{mainthm.Evendim}
      Let $Y\subset \PP^{2g}$ be a smooth complete intersection of two quadrics and let $\cM^i$ be the corresponding moduli space.
	The map $h_{\cM^i}$ coincides with the map $\Phi_Y$ under the isomorphism $Y\cong \cM^i$ induced by \eqref{eq.Isom-X-M}.
\end{theorem}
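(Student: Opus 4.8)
The plan is to deduce Theorem~\ref{mainthm.Evendim} from the odd-dimensional Theorem~\ref{mainthm.Odd-dimension} by a functoriality argument attached to the closed embedding $Y\hookrightarrow X$. Here $Y=X\cap\{x_{2g+1}=0\}$ is the fixed locus of the involution $\sigma$ of $X$ changing the sign of $x_{2g+1}$, which corresponds under \eqref{eq.Isom-X-M} to $i\colon\cM\to\cM$, so $Y\cong\cM^i$. The starting point is that the fixed locus of an involution carries a canonically split conormal sequence: $T^*Y\cong (T^*X|_Y)^{+}$ and $T^*\cM^i\cong (T^*\cM|_{\cM^i})^{+}$, the $(+1)$-eigenbundles of $d\sigma$, resp. of the differential of the moduli involution. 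In particular $T^*Y\hookrightarrow T^*X|_Y$ and $T^*\cM^i\hookrightarrow T^*\cM|_{\cM^i}$ as subbundles, and by construction $h_{\cM^i}$ is the restriction of $h_{\cM}$ along the second inclusion. Hence it suffices to show that $\Phi_Y$ is, in the same sense, the restriction of $\Phi_X$, and that the two eigenbundle inclusions correspond under \eqref{eq.Isom-X-M} together with the cotangent descriptions of Propositions~\ref{prop.Cotangent-cM} and~\ref{prop.Cotangent-cMi}.

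For the first point, the splitting $T_X|_Y=T_Y\oplus N_{Y/X}$ yields a restriction map $\rho\colon H^0(X,S^2T_X)\to H^0(Y,S^2T_Y)$ (restrict a symmetric $2$-tensor to $Y$, then project onto the $S^2T_Y$-summand). A cotangent vector $\xi\in T^*_{Y,y}$, viewed inside $(T^*_{X,y})^{+}$, annihilates $N_{Y/X,y}$, so $\langle s|_y,\xi\otimes\xi\rangle=\langle\rho(s)|_y,\xi\otimes\xi\rangle$ for every $s\in H^0(X,S^2T_X)$; this says exactly $\rho^{*}\circ\Phi_Y=\Phi_X|_{T^*Y}$. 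Since $\Phi_X$ and $\Phi_Y$ are fibrations onto affine spaces of dimensions $\dim X=2g-1$ and $\dim Y=2g-2$ (Theorem~\ref{thm.BEHLV}), one checks, e.g. from the explicit description of symmetric tensors on complete intersections of two quadrics in \cite{BeauvilleEtesseHoeringLiuVoisin2024}, that $\rho$ is surjective, so $\rho^{*}$ is injective and $\Phi_Y$ is determined by $\Phi_X|_{T^*Y}$. On the moduli side, one must verify that under the identifications of Propositions~\ref{prop.Cotangent-cM} and~\ref{prop.Cotangent-cMi} the inclusion $T^*\cM^i\hookrightarrow T^*\cM|_{\cM^i}$ becomes $H^0(C,\wedge^2F\otimes K_C\otimes\sO_C(-p_{2g+1}))^{+}\hookrightarrow H^0(C,\wedge^2F\otimes K_C)^{+}$, and that $\cA_{\Spin_{2g}}$ restricts accordingly: for $\theta$ vanishing at $p_{2g+1}$, the invariants $\operatorname{tr}(\wedge^{2j}\theta)$ and $\operatorname{Pf}(\theta)$ vanish at $p_{2g+1}$, so by Theorem~\ref{mainthm.Odd-dimension}(1) the image of $h_{\cM^i}$ lands in $H^0(C,K_C^{2}\otimes\sO_C(-2p_{2g+1}))^{+}\cong H^0(\PP^1,K_{\PP^1}^{2}\otimes\sO_{\PP^1}(\Delta-r_{2g+1}))\cong\CC^{2g-2}$, matching $\Phi_Y$. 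Granting these, Theorem~\ref{mainthm.Odd-dimension}(2) gives $h_{\cM}=\Phi_X$, whence $h_{\cM^i}=h_{\cM}|_{T^*\cM^i}=\Phi_X|_{T^*Y}=\Phi_Y$ under $Y\cong\cM^i$.

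The main obstacle is the bookkeeping in the second point: one must carefully distinguish the hyperelliptic involution $i\colon C\to C$ (responsible for the superscript $+$ in the cotangent formulas) from the moduli involution $[\cF]\mapsto[i^*\cF]$ on $\cM$, and trace through Ramanan's isomorphism \eqref{eq.Isom-X-M} that the eigenspace splitting of $T^*\cM|_{\cM^i}$ for the moduli involution is carried to the splitting of $T^*X|_Y$ for $\sigma$. Equivalently, one needs that the subspace $H^0(C,\wedge^2F\otimes K_C(-p_{2g+1}))^{+}$ of Proposition~\ref{prop.Cotangent-cMi} is exactly the $(+1)$-eigenspace of the differential of $[\cF]\mapsto[i^*\cF]$ at a fixed point; this is a statement about the deformation theory of $i$-invariant orthogonal bundles at such a point, and is where the genuine work lies. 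Once this compatibility and the surjectivity of $\rho$ are established, the theorem is a formal consequence of Theorem~\ref{mainthm.Odd-dimension}.
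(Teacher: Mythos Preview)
Your proposal is correct and follows essentially the same route as the paper: deduce the even case from Theorem~\ref{mainthm.Odd-dimension} by restricting along the fixed-locus inclusion $Y\cong\cM^i\hookrightarrow X\cong\cM$, using the splitting $T^*X|_Y=T^*Y\oplus N^*_{Y/X}$ and the compatibility of the cotangent descriptions in Propositions~\ref{prop.Cotangent-cM} and~\ref{prop.Cotangent-cMi}. The two technical points you flag as ``where the genuine work lies''---surjectivity of the restriction $\rho$ and the identification of $H^0(C,\wedge^2F\otimes K_C(-p_{2g+1}))^+$ with the correct eigenspace---are precisely what the paper supplies via Fact~\ref{fact.Defining-annihilator} (citing \cite{BeauvilleEtesseHoeringLiuVoisin2024}) and the proof of Proposition~\ref{prop.Cotangent-cMi}, respectively; once those are in hand, your concluding chain $h_{\cM^i}=h_{\cM}|_{T^*\cM^i}=\Phi_X|_{T^*Y}=\Phi_Y$ is exactly the paper's diagram~\eqref{eq.Restriction-inv-locus}.
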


 \begin{remark}
     In a  recent work  Hitchin studies the integrable system defined by the map $\Phi_X\colon T^*X  \to \CC^n$ of Theorem \ref{thm.BEHLV}. He shows \cite[Prop.1]{HitLaum} that the locus of points 
     $$\cW\coloneq\{x\in X \mid \Phi_X^{-1}(0)\cap T_{x}^* X\not=0\}\subset X$$ is a  hypersurface in $X$. Using Theorems \ref{mainthm.Odd-dimension} and \ref{mainthm.Evendim}, we can reinterpret the locus $\cW$ as the \emph{wobbly} locus, i.e. the complement of the locus of \emph{very stable} bundles.
 \end{remark}

\subsection*{Acknowledgements} 

J.~Liu would like to thank Yanjie Li for bringing his attention to \cite{Ram1981}, which is the starting point of this paper.  
We thank H. Zelaci for discussions around his inspiring paper \cite{Zelaci2022}.

A.H. is partially supported by the projects ANR-23-CE40-0026 ``POK0'',
ANR-24-CE40-3526 ``GAG'',
and the France 2030 investment plan managed by the National Research Agency (ANR), as part of the Initiative of Excellence of Universit\'e C\^ote d'Azur under reference number ANR-15-IDEX-01.

J.L. is supported by the National Key Research and Development Program of China (No. 2021YFA1002300), the CAS Project for Young Scientists in Basic Research (No. YSBR-033), the NSFC grant (No. 12288201) and and the Youth Innovation Promotion Association CAS.

\section{Principal bundles and Hitchin morphism}

We work over the complex numbers, for general definitions we refer to \cite{Har77}. 
Varieties will always be supposed to be irreducible and reduced. All algebraic groups are affine. All the algebraic groups appearing this paper are with coefficients in $\C$,
and we will abbreviate $\GL_n \CC, \mbox{SO}_n \C, \ldots$ by $\GL_n, \mbox{SO}_n, \ldots$. 

Let $C$ be a smooth projective curve of genus $g\geq 2$, and let $G$ be a reductive algebraic group. 

\subsection{Moduli spaces of principal bundles}
\label{ss.Moduli-Hitchin-map}

We briefly recall the definition and basic properties of the moduli spaces of principal $G$-bundles over $C$ --- see \cite{Ramanathan1996,Ramanathan1996a}.

\begin{definition}
A $G$-principal bundle $\cF$ is a variety with a right $G$-action admitting 
a $G$-invariant fibration $p\colon \cF \to C$ which is locally trivial in the Zariski topology.
\end{definition}

Let $V$ be a be a representation of the algebraic group $G$, we define the variety
$$
F \coloneqq \cF_V \coloneqq\cF \times^G V
$$
as the quotient of $\cF \times V$ by the $G$-action $g \cdot (f,v)=(fg,g^{-1}v)$.
The $G$-action descends to $\cF_V$ and $\cF_V \rightarrow C$ defines a vector bundle on $C$.  As a special case we denote by $\cF_\fg \rightarrow C$ the vector bundle given by the adjoint action of $G$ on its Lie algebra $\fg$.

The same construction can be applied to any variety $V$ with $G$-action. Moreover,
given a $G$-equivariant morphism $f:V \to V'$ of $G$-varieties, one also obtains the corresponding morphism $\cF_f\colon \cF_V \to \cF_{V'}$.

\begin{example}{\rm
When $G$ is $\GL_n$ and $V$ is the standard representation of $\GL_n$, the above construction can be reversed. Indeed $\cF$ can be reconstructed from $\cF_V$ as its frame bundle. This gives a bijection between $\GL_n$-principal bundles and vector bundles of rank $n$.}
\end{example}

We refer the reader to \cite[Definition 2.13]{Ramanathan1996} for the notion of (semi-)stability for general principal $G$-bundles. In particular, $\cF$ is semistable if and only if $\cF_{\fg}$ is semistable as a vector bundle (\cite[Corollary 3.18]{Ramanathan1996}). Moreover, $\cF$ is called \emph{regularly stable} if $\cF$ is stable and $\Aut(\cF)$ is equal to the center $Z(G)$ of $G$.

\begin{notation} \label{notation:MG}
Let $G$ be a connected reductive algebraic group.
We denote by 
$$
\MG \coloneqq  \mathcal M_{G,C}
$$ the (coarse) moduli space of (equivalence classes of) semistable $G$-principal bundles
as defined in \cite{Ramanathan1996}.

For simplicity of notation we will denote by $[\cF] \in \MG$ the point corresponding
to a semistable $G$-principal bundle $\cF \rightarrow C$.
\end{notation}

By \cite[Theorem 5.9]{Ramanathan1996a}, the connected components $\cM_G^{\gamma}$ of $\cM_G$ are irreducible normal projective varieties parametrized by an element $\gamma\in \pi_1(G)$ with
\[
\dim \cM_G^{\gamma} = (g-1) \dim G + \dim Z(G).
\]

The open subset $\cM_{G}^{\rs}$ of $\cM_G$ corresponding to regularly stable $G$-bundles is smooth, and its complement in $\cM_G$ is of codimension $\geq 2$ except when $C$ is of genus $2$ and $G$ maps onto $\operatorname{PGL}_2$  \cite[Theorem II.6]{Faltings1993}. 

\subsection{Hitchin morphism}

Fix an isomorphism  $\fg\cong \fg^\vee$ as $G$-representations
(e.g. if $G$ is semisimple). Let $\cF$ be a regularly stable $G$-bundle. Then $\cM_G$ is smooth at $[\cF]$ and the tangent space identifies to $H^1(C,\cF_\fg)$. 
Using Serre duality and the fact that $\fg \cong \fg^\vee$  one obtains
\begin{equation}
\label{tangentMG}
T_{\MG,[\cF]} = 
H^1(C,\cF_\fg) =
H^0(C,\cF_\fg \otimes K_C)^*.
\end{equation}

\begin{example}
The Lie algebra of $\GL_n$ is $\mathfrak{gl}_n= \End(V)$, where $V$ is again the standard representation of $\GL_n$. As a $\GL_n$-representation, $\End(V)$ decomposes in the direct sum of $\End_0(V)$ (traceless matrices) and $\CC \id$ (trivial representation). By functoriality we get $\cF_{\fg}=\End(F)=\End_0(F) \oplus \sO_C$. 

The geometric meaning of this decomposition is as follows:
Given a stable vector bundle $F \rightarrow C$, let $\det(F)$ be its determinant. Let us denote by $\cU_C(n,d)$ the moduli space of semistable vector bundles of rank $n$ and determinant of degree $d$, so $\cU_C(n,d) \subset \mathcal M_{\GL_n}$ is an irreducible normal projective variety. Then we have a morphism
$$
\cU_C(n,d) \longrightarrow J^d(C), \qquad F \ \longmapsto \ \det(F)
$$
where $J^d(C)$ is the Jacobian of degree $d$ line bundles on $C$.
We denote by $\cU_C(n, L)$ the fibre over the line bundle $[L] \in J^d(C)$.
The fibre over the trivial bundle $[\sO_C] \in J^0(F)$ is isomorphic to 
$\mathcal M_{\SL_n}$, and the differential at $[F]$ is the projection 
$$
T_{\cU_C(n,d),[F]} = H^1(C,\End(F)) \to H^1(C,\sO_C) =T_{J^d(C),[\det(F)]}.
$$ 
Thus we deduce that 
$$
T_{\cM_{\SL_n},[F]} =H^1(C,\End_0(F)).
$$ 
Notice that $\End_0(V) = \fsl_n$ is the Lie algebra of $\SL_n $.
\end{example}

As we have mentioned, the construction $V \mapsto \cF_V$ is functorial. Let us fix a homogeneous $G$-invariant polynomial $f \in \CC[\fg^*]$ of degree $\nu$, i.e. a homogeneous $G$-equivariant map $f:\fg \to \CC$. 
Applying functoriality to $f \otimes \id_{K_C}$
we get thus a  map 
$$
\cF_f: \cF_\fg \otimes K_C \to \sO_C \otimes K_C^\nu=K_C^\nu,
$$
and a  morphism at the level of sections: $H^0(C,\cF_\fg \otimes K_C) \to H^0(C,K_C^\nu)$. 

When $G$ is semisimple, by a well-known theorem by Chevalley, the algebra of $G$-invariants in $\CC[\fg^*]$ is a polynomial algebra $\CC[f_1,\cdots,f_r]$, where $r$ is the rank of $G$ and $f_i$ are homogeneous of degree $\nu_i$ for $1 \leq i \leq r$. 

\begin{definition}[\protect{\cite[\S\,4]{Hit87}}] \label{defn:hitchin-general} 
Assume that $G$ is semisimple. Given a $G$-principal bundle $\cF$  on $C$, we set
$$
h_\cF \coloneqq  \bigoplus_{i=1}^r \cF_{f_i}\colon  \ H^0(C,\cF_\fg \otimes K_C) \longrightarrow \bigoplus_{i=1}^r H^0(C,K_C^{\nu_i}) \eqqcolon \cA_G
$$ 
Using the identification \eqref{tangentMG} we obtain a map
$$
h_G\colon T^* \cM_G^{\rs} \longrightarrow  \cA_{G}
$$
called the Hitchin morphism, and $\cA_G$ is called the Hitchin base.
\end{definition}

\begin{remark} \label{remark:killing}
We will denote by $h_i$ the restriction of the Hitchin morphism coming from the $i$-th invariant $f_i$. Recall that for any Lie algebra there exists a \emph{natural} degree $2$ invariant form $f_1$, which is the Killing form (\cite{FH91}). We thus get a morphism
$$
h_1: T^* \cM_G^{\rs} \longrightarrow H^0(C,K_C^{2}).
$$
Notice that, since the Killing form $f_1$ has degree $2$, the map $h_1$ also has degree $2$.
\end{remark}

\begin{example}
	\label{ex.Charac-poly-SO}
	For classical groups, a basis for the invariant polynomials can be described by the coefficients of the characteristic polynomial (\cite[\S\,5]{Hit87})). In particular, for $G=\SO_{2n}$, the distinct eigenvalues of a matrix $A\in \mathfrak{so}_{2n}$ occur in pairs $\pm \zeta_i$, and thus the characteristic polynomial of $A$ is of the form
	\[
	\det(x\operatorname{Id}-A) = x^{2n} + a_1 x^{2n-2} + \dots + a_{n-1} x^2 + a_n.
	\]
	In this case the coefficient $a_n$ is the square of a polynomial $\operatorname{Pf}$, the \emph{Pfaffian}, of degree $n$. Then a basis for the invariant polynomials on the Lie algebra $\mathfrak{so}_{2n}$ is 
	\[
	a_1,a_2,\dots,a_{n-1},\operatorname{Pf}.
	\]
\end{example}

\section{Moduli spaces of Spin and orthogonal bundles}

\subsection{Spin, Clifford and orthogonal groups}
\label{subsectionclifford}

Fix $m\geq 4$ an even positive integer. Let $M$ be a complex vector space with the standard quadratic form $q_m$. 
We collect some basic facts about Spin, Clifford and orthogonal groups. The \emph{orthogonal group} is defined as the group 
$$
\operatorname{O}_m \coloneqq \{A\in \GL_m\mid A^t q_m A=q_m\};
$$
it has two connected components,  with $\SO_m$ being the connected component of the identity.

\begin{definition} \label{definitioncliffordalgebra }
The Clifford algebra 
$$
\Clif_m \coloneqq  \Clif(m, q_m)
$$
is the quotient of the tensor algebra $\bigoplus M^{\otimes \bullet}$
by elements of the form $v\otimes v -q_m(v)$.
\end{definition}

\begin{facts} \cite[Chap. 2]{Mei13}
\begin{itemize}
\item The underlying vector space of $\Clif_m$ is the exterior algebra $\wedge^\bullet M$.
\item There is an involution
$$
\Pi \colon \Clif_m \rightarrow \Clif_m
$$
called the \emph{parity involution}, and we denote by $\Clif_m^+$ the $+1$-eigenspace of even elements.
\end{itemize}
\end{facts}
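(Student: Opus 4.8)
The statements in question are classical and are cited from \cite{Mei13}; nonetheless, here is the route I would take to prove them, which I expect to be entirely standard with no deep obstacle.

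For the first assertion I would run the usual filtration/symbol-map argument. Equip the tensor algebra $\bigoplus_k M^{\otimes k}$ with its ascending filtration by total tensor degree and push it down to the quotient $\Clif_m$. The leading term of the defining relation $v\otimes v - q_m(v)$ is $v\otimes v$, since $q_m(v)$ lies in filtration degree $0$; hence the associated graded algebra $\operatorname{gr}\Clif_m$ is a quotient of the tensor algebra by the relations $v\otimes v$, and there is a surjective graded algebra homomorphism $\wedge^\bullet M \twoheadrightarrow \operatorname{gr}\Clif_m$. To see this is an isomorphism I would exhibit the \emph{Clifford action} on $\wedge^\bullet M$: for $v\in M$ set $\rho(v)\coloneqq \varepsilon_v + \iota_v$, where $\varepsilon_v$ is left exterior multiplication by $v$ and $\iota_v$ is contraction against $v$ via the polarisation of $q_m$ (normalised so that $\varepsilon_v\iota_v+\iota_v\varepsilon_v = q_m(v)\operatorname{id}$). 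A one-line computation gives $\rho(v)^2 = q_m(v)\operatorname{id}$, so by the universal property $\rho$ extends to an algebra homomorphism $\Clif_m\to\End(\wedge^\bullet M)$.

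Now pick an orthogonal basis $e_1,\dots,e_m$ of $M$; the ordered monomials $e_{i_1}\cdots e_{i_k}$ with $i_1<\cdots<i_k$ span $\Clif_m$, and evaluating $\rho(e_{i_1}\cdots e_{i_k})(1)$ recovers, up to a nonzero scalar, the vector $e_{i_1}\wedge\cdots\wedge e_{i_k}$, so these monomials are linearly independent in $\Clif_m$. Therefore $\dim_\CC\Clif_m = 2^m = \dim_\CC\wedge^\bullet M$, the surjection above is an isomorphism, and unwinding the filtration yields the asserted vector-space identification $\Clif_m\cong\wedge^\bullet M$, whose inverse is the antisymmetrisation map $v_1\wedge\cdots\wedge v_k\mapsto \frac{1}{k!}\sum_{\sigma\in S_k}\operatorname{sgn}(\sigma)\,v_{\sigma(1)}\cdots v_{\sigma(k)}$. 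For the parity involution I would observe that $-\operatorname{id}_M$ extends uniquely to an algebra automorphism of the tensor algebra, and since $(-v)\otimes(-v)-q_m(-v)=v\otimes v-q_m(v)$ this automorphism preserves the defining ideal; hence it descends to an algebra automorphism $\Pi$ of $\Clif_m$ restricting to $-\operatorname{id}$ on $M$, so $\Pi^2=\operatorname{id}$. Its $+1$-eigenspace $\Clif_m^+$ is a subalgebra, and under the identification above it is precisely $\bigoplus_j \wedge^{2j}M$, the span of products of an even number of vectors.

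The only point with any content is the lower bound $\dim_\CC\Clif_m\ge 2^m$, equivalently the faithfulness of the Clifford action $\rho$ on $\wedge^\bullet M$ (or, equivalently, injectivity of antisymmetrisation); everything else is formal use of the universal property and of the tensor-degree filtration. A minor verification, also routine, is that the antisymmetrisation map is independent of the chosen orthogonal basis and is genuinely inverse to the symbol map at the level of associated graded pieces.
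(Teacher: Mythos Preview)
Your argument is correct and entirely standard: the filtration/symbol-map route together with the Clifford action on $\wedge^\bullet M$ (or equivalently the antisymmetrisation map) is the textbook proof of the vector-space isomorphism, and the construction of $\Pi$ from $-\operatorname{id}_M$ via the universal property is the usual one.

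There is nothing to compare against, however: the paper does not prove this statement. It is recorded as a \texttt{facts} environment with a bare citation to \cite[Chap.~2]{Mei13}, and no argument is given. Your write-up therefore supplies what the paper deliberately outsources to the reference.
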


\begin{definition} \label{definitioncliffordgroup}
Then Clifford group is defined as
$$
\operatorname{C}_m\coloneqq \{g\in \textup{Clif}_m^\times \mid \Pi(g) M g^{-1}\subset M\} 
$$
and 
$$
\SC_m \coloneqq \operatorname{C}_m\cap \Clif_m^+.
$$ 
\end{definition}

By \cite[Sec. 3.1.1]{Mei13} There is a surjective algebra homomorphism, called the \emph{spinor norm}:
\begin{equation}
\label{normmorphism}
\Nm \colon \operatorname{C}_m \longrightarrow  \CC^*.
\end{equation}

\begin{definition} \label{definitionspin}
The Pin group $\Pin_m$ is the kernel of the spinor norm morphism \eqref{normmorphism}.
The Spin group is defined as
$$
\Spin_m \coloneqq  \Pin_m \cap \SC_m,
$$
so we have an exact sequence
\begin{equation}
\label{ex_seq_norm_spin}
    1\longrightarrow \Spin_m \longrightarrow \SC_m \stackrel{\Nm}{\longrightarrow} \CC^* \longrightarrow 1.
\end{equation}
\end{definition}

\begin{facts} \cite[Sec. 3.1.2]{Mei13}
 There is also a group homomorphism $\SC_m \rightarrow \SO_m$ whose kernel is $\CC^*$ and is contained in the center of $\SC_m$. Then one has the following commutative diagram of short exact sequences:
 \begin{equation}
 	\label{exact_seq_spin_SO}
 	\begin{tikzcd}[row sep=large,column sep=large]
 		1 \arrow[r]
 		    & \CC^* \arrow[r]
 		        & \SC_m \arrow[r]
 		            & \SO_m \arrow[r]
 		                & 1 \\
 		1 \arrow[r]
 		    & \Z_2 \arrow[u] \arrow[r]
 		        & \Spin_m \arrow[u] \arrow[r]
 		            & \SO_m \arrow[r] \arrow[u, equal]
 		               & 1
 	\end{tikzcd}
 \end{equation}
\end{facts}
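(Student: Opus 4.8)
To prove this, the plan is to realise the stated homomorphism $\SC_m\to\SO_m$ as the (twisted) adjoint representation, which on the even part of the Clifford algebra is just conjugation, and then to cut it down to $\Spin_m=\ker\Nm$. First I would define, for $g\in\SC_m\subset\Clif_m^+$, the linear map $\rho(g)\colon M\to M$, $v\mapsto gvg^{-1}$. Since $\Pi(g)=g$ for $g$ even, the defining condition of $\operatorname{C}_m$ reads exactly $gMg^{-1}\subset M$, so $\rho(g)$ is a well-defined automorphism of $M$, and $g\mapsto\rho(g)$ is visibly a group homomorphism. To see that it lands in $\operatorname{O}_m$, I use the relation $v^2=q_m(v)$ in $\Clif_m$: then $(gvg^{-1})^2=g\,q_m(v)\,g^{-1}=q_m(v)$, so $\rho(g)$ preserves $q_m$.

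Next I would establish the top row $1\to\CC^*\to\SC_m\xrightarrow{\rho}\SO_m\to1$. A short computation in $\Clif_m$ (using $vw+wv=2b(v,w)$, where $b$ is the polarisation of $q_m$, together with $v^{-1}=v/q_m(v)$) shows that for an anisotropic vector $v\in M$ one has $vuv^{-1}=-r_v(u)$, with $r_v$ the orthogonal reflection in the hyperplane $v^\perp$; hence for a product $g=v_1\cdots v_{2k}$ of an even number of anisotropic vectors, $\rho(g)=r_{v_1}\circ\cdots\circ r_{v_{2k}}\in\SO_m$. By the structure theorem for the even Clifford group over $\CC$, every element of $\SC_m$ is a scalar multiple of such a product, and $\rho$ is trivial on scalars, so $\rho(\SC_m)\subseteq\SO_m$; conversely, Cartan--Dieudonn\'e expresses every element of $\SO_m$ as a product of an even number of reflections, so $\rho$ is onto $\SO_m$. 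For the kernel: $g\in\ker\rho$ means $g$ commutes with all of $M$, hence with the algebra $\Clif_m$ that $M$ generates; since $m$ is even, $Z(\Clif_m)=\CC\cdot 1$, so $\ker\rho=\CC^*$, which is central in $\SC_m$ (indeed central in the whole algebra).

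Finally I would restrict $\rho$ to $\Spin_m=\Pin_m\cap\SC_m=\ker(\Nm|_{\SC_m})$. Since $\Nm(\lambda\cdot 1)=\lambda^2$ for $\lambda\in\CC^*$, the subgroup $\CC^*\cap\Spin_m$ equals $\{\lambda:\lambda^2=1\}=\{\pm1\}\cong\Z_2$, so $\ker(\rho|_{\Spin_m})=\Z_2$. Moreover, for any $g\in\SC_m$ one may pick $\lambda\in\CC^*$ with $\lambda^2=\Nm(g)$ (possible over $\CC$), and then $\lambda^{-1}g\in\Spin_m$; hence $\SC_m=\CC^*\cdot\Spin_m$, and since $\rho$ kills $\CC^*$ this gives $\rho(\Spin_m)=\rho(\SC_m)=\SO_m$. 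Thus the bottom row $1\to\Z_2\to\Spin_m\xrightarrow{\rho}\SO_m\to1$ is exact. The vertical maps of the diagram are the inclusions $\{\pm1\}\hookrightarrow\CC^*$ and $\Spin_m\hookrightarrow\SC_m$ together with $\id_{\SO_m}$, and both squares commute because each of the two composites in each square is literally a restriction of a single map.

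The only inputs that are not pure diagram-chasing are the standard structural facts about $\Clif_m$ used above: that $Z(\Clif_m)=\CC\cdot1$ when $m$ is even (needed to identify $\ker\rho$), that the even Clifford group over $\CC$ consists of scalars times products of an even number of anisotropic vectors (needed for surjectivity of $\rho$ onto $\SO_m$ and to see the image is not all of $\operatorname{O}_m$), and the value $\Nm(\lambda\cdot1)=\lambda^2$. These are exactly the content recalled in \cite[Sec.~3.1.2]{Mei13}, so I expect this to be where the substance lies; once they are granted, the construction of $\rho$ and the verification of the diagram are routine.
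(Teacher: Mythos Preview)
Your argument is correct and is exactly the standard proof one finds in the cited reference. Note that the paper itself does not give a proof of this statement: it is recorded as a \emph{Fact} with the citation \cite[Sec.~3.1.2]{Mei13}, so there is nothing to compare beyond observing that you have reproduced the classical construction of the twisted adjoint representation and the resulting exact sequences.
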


\subsection{Stability}

Let $C$ be a smooth projective curve of genus $g\geq 2$.
The notion of $G$-bundles over $C$ can be translated into the usual vector bundle with additional structure. In our situation, we recall:

\begin{itemize}
	\item  An \emph{orthogonal bundle} is either a principal $\operatorname{O}_m$-bundle, or equivalently a pair $(F,q)$, where $F$ is a vector bundle of rank $m$ and $q\colon S^2 F\rightarrow \sO_C$ is a non-degenerate quadratic form. 
	
	\item A \emph{special (or oriented) orthogonal bundle} is either a principal $\SO_m$-bundle or equivalent a triple $(F,q,\omega)$, where $(F,q)$ is an orthogonal vector bundle and $\omega$ is a section of $\det(E)$ such that $\widetilde{q}(\omega)=1$ with respect to the induced quadratic form $\widetilde{q}\colon S^2 \det(E)\rightarrow \sO_C$.
\end{itemize}

Within these notations, the (semi-)stability translates in a very convenient way \cite[Definition 4.1 and Remark 4.3]{Ram1981}:

\begin{definition}
	\label{defn.semi-stability-orthogonal-bundles}
	Let $\cF$ be an orthogonal bundle $(F,q)$ or a special orthogonal bundle $(F,q,\omega)$. Then $\cF$ is semistable (resp. stable) if every proper non-zero isotropic subbundle of $F$ has degree $\leq 0$ (resp. $<0$).
\end{definition}

By\cite[Proposition 4.2]{Ram1981}, $\cF$ is semistable if and only if $F$ is semistable as a vector bundle. We refer the reader to \cite[Proposition 4.5]{Ram1981} and \cite[4.2]{Serman2012} for a detailed discussion on the stability and regular stability.

\subsection{Clifford, Spin and orthogonal moduli spaces}
\label{ss.CSO-Moduli}

Following \cite{Oxbury1998} and \cite{Serman2012} we gather
some basic facts of moduli spaces of Clifford, Spin and (special) orthogonal bundles.

Note that $\SC_m$ is a connected reductive group \cite[Corollary 2.2]{Oxbury1998}. Restricting the spinor norm morphism \eqref{normmorphism} to $\SC_m$, we obtain an induced  map
\begin{equation}
	\label{norm-moduli}
	\cM_{\SC_m}  \longrightarrow H^1(C,\sO_C^*) \simeq \mbox{Pic}(C), \qquad  [\cF] \ \mapsto \ [\Nm(\cF)]
\end{equation}
which for simplicity of notation we will also denote by $\Nm$. By \cite[Proposition 2.4]{Oxbury1998} and \cite[Thm.5.9]{Ramanathan1996a}, the connected components of $\cM_{\SC_m}$ are parametrised by $\deg(\Nm(\cF))$.  Note that \eqref{exact_seq_spin_SO} defines an action $$
\mbox{Pic}(C) \times \cM_{\SC_m} \longrightarrow \cM_{\SC_m}, \qquad (L,\cF) \mapsto L . \cF
$$
By \cite[(2.3)]{Ram1981} we have for every $L \in \mbox{Pic}(C)$ and any $\SC_m$-bundle $\cF$ that
\begin{equation}
	\label{formula-ramanan}
	\Nm(L . \cF) \simeq L^{2} \otimes \Nm(\cF)
\end{equation}
Using that $J(C)$ is divisible, we see that the fibres of \eqref{norm-moduli} belong to two isomorphism classes distinguished by the degree of $\Nm(\cF)$ \cite[Proposition 2.5]{Oxbury1998} and they are irreducible normal projective varieties. We define
\begin{equation}
    \label{notation-Mspin}
    \cM_{\Spin_m}^+\coloneqq \cM_{\SC_m, [\sO_C]} \simeq \cM_{\Spin_m} \quad \text{and}\quad \cM_{\Spin_m}^- \coloneqq \cM_{\Spin_m,[L]},
\end{equation}
where $\deg(L)$ is odd. Similar to $\cU_C(n,\sO_C(p))$, we can view $\cM_{\Spin_m}^-$ as the moduli space of \emph{twisted $\Spin_m$-bundles}. By \eqref{formula-ramanan}, the spinor norm is invariant by the action of $2$-torsion points
of $\mbox{Pic}(C)$, so $J_2(C)=H^1(C,\Z_2)$
acts on $\cM^{\pm}_{\Spin_m}$. 

Since $\SO_{m}$ is a connected semisimple algebraic group with $\pi_1(\SO_m)\cong \Z_2$, the moduli space $\cM_{\SO_m}$ has two irreducible components distinguished by the \emph{second Stiefel--Whitney class}
$$
w_2 \colon  H^1(C,\SO_m) \longrightarrow H^2(C,\Z_2).
$$
We will denote by $\cM_{\SO_m}^\pm$ the component of $\cM_{\SO_m}$ consisting of $\SO_m$-bundles $\cF'$ with 
\[
w_2(\cF')=(1\mp 1)/2.
\] 
It follows from \cite[(2.10)]{Oxbury1998} that we have Galois covers induced by the natural $J_2(C)$-action:
$$
\mu^{\pm}\colon \cM_{\Spin_m}^{\pm} \stackrel{J_2(C)}{\longrightarrow} \cM_{\SO_m}^{\pm}.
$$

There also exists a moduli space $\cM_{\operatorname{O}_m}$ for semistable $\operatorname{O}_m$-bundles \cite[1.10]{Serman2012}, which has several connected components determined by the 
first and second Stiefel--Whitney classes $w_i(\cF'')\in H^2(X,\ZZ_2)$, $i=1,2$, where $\cF''=(F,q)$ is an $\operatorname{O}_m$-bundle. 
Note that $w_1(\cF'')$ is nothing but the determinant $\det(F)$ \cite[2.4]{Serman2012}. Moreover, we have the natural forgetful map
\begin{equation}
	\label{forgetSO}
	\cM_{\SO_m} \longrightarrow \cM_{\operatorname{O}_m}, \qquad (F, q, \omega) \ \mapsto \ (F, q)
\end{equation}
Denote by $\cM_{\operatorname{O}_m}^{\pm}$ the images of $\cM_{\SO_m}^{\pm}$. They are irreducible components of $\cM_{\operatorname{O}_m}$. By \cite[2.8 and Proposition 2.9]{Serman2012}, The maps $\cM_{\SO_m}^{\pm}\rightarrow \cM_{\operatorname{O}_m}^{\pm}$ are of degree two and the compositions
\begin{equation}
	\label{define-eta}
	\nu^{\pm}\colon \cM_{\Spin_{m}}^{\pm}\longrightarrow \cM_{\SO_{m}}^{\pm} \longrightarrow \cM_{\operatorname{O}_{m}}^{\pm}
\end{equation}
are $(\ZZ_2)^{m+1}$-Galois covers of their images. 

\begin{example}[\protect{\cite[4.4]{Oxbury1999}}]
\label{ex.Spin4}
    It is well-known that the special Clifford group $\SC_4$ is isomorphic to the subgroup of $\GL_2\times \GL_2$ consisting of pairs of matrices $(A,B)$ such that $\det(A)=\det(B)$ so that the spinor norm is then the common $2\times 2$ determinant. Then $\Spin_4=\SL_2\times \SL_2$.  Thus $\cM_{\SC_4}=\cM_{\GL_2}\times_{\det} \cM_{\GL_2}$ while $\cM^{\pm}_{\Spin_4}$ are 
    \[
    \cU_C(2,\sO_C)\times \cU_C(C,\sO_C) = \cM_{\SL_2} \times \cM_{\SL_2}\quad \text{and}\quad \cU_C(2,L)\times \cU_C(2,L)
    \]
    respectively, where $L$ is a line bundle of odd degree over $C$.
\end{example}

\section{The intersection of two quadrics}
\label{section:twoquadrics}

We will now restrict our attention to the odd-dimensional intersection of two quadrics:

\begin{setup}{\rm
    \label{setup-odd}
  Let $X \subset \PP^{2g+1}$ be the complete intersections of two quadrics given by the following two equations:
 \[
 q_1\coloneqq \sum_{j=0}^{2g+1} x_j^2 \quad \text{and}\quad q_2\coloneqq \sum_{j=0}^{2g+1} \lambda_j x_j^2,
 \]
with pairwise distinct coefficients $\lambda_j \in \CC$.
Assume that $g \geq 2$ and set $n=2g-1$, so $\dim X=n$.
We identify the pencil $\langle q_1,q_2\rangle \subset |\sO_{\PP^{2g+1}}(2)|$ with $\PP^1$ such that $t=[a:b]\in \PP^1$ corresponds to $q_t=a q_1 + b q_2$.

Let 
$$
\pi\colon C\rightarrow \PP^1
$$ 
be the associated hyperelliptic curve of genus $g$, i.e., 
$\pi$ is the double cover of $\PP^1$ branching exactly over
the points $r_j=[\lambda_j:-1]\in \PP^1$ for $j=0, \ldots, 2g+1$.
We denote the branch locus by $\Delta \subset \PP^1$ and 
the Weierstra\ss\ points $W\subset C$ by
$$
W=\{p_0,\dots,p_{2g+1}\}
$$
where $r_j=\pi(p_j)$. As usual denote by 
$i\colon C\rightarrow C$ the hyperelliptic involution.

Let $h$ be the pull-back $\pi^*\sO_{\PP^1}(1)$ and set
\begin{equation}
    \label{notation-alpha}
    \alpha \coloneqq h^{g-1}\otimes \sO_{C}(p_{2g+1}).
\end{equation}
Then $\alpha$ has odd degree and applying the construction of twisted Spin bundles as in \eqref{notation-Mspin}  we write
$$
\cM_{\Spin_{2g}}^- \coloneqq \cM^-_{\Spin_{2g},[\alpha]}
$$
}
\end{setup}

\begin{notation}
	\label{notation.inv-antiinv}
	For a vector space $M$ equipped with an involution, we shall denote by $M^{-}$ (resp. $M^+$) the eigenspace with respect to $-1$ (resp. $+1$).
\end{notation}

For the rest of the section we will always assume that we are in
Setup \ref{setup-odd} as above.

\begin{definition} \label{defn:type}
    Let $(F,q)$ be an orthogonal bundle of rank $m$ over $C$.
    \begin{enumerate}
    	\item We call $(F,q)$ $i$-invariant if there exists an isomorphism $(i^*F,i^*q)\cong (F,q)$.
    	
    	\item Assume that $(F,q)$ is $i$-invariant. The type $\tau\coloneqq (\tau_0,\dots,\tau_{2g+1})$ of $(F,q)$ is defined with $\tau_j=\dim F_{p_j}^-$ with respect to the lifted action $i$ on $F_{p_j}$.
    \end{enumerate}
\end{definition}

If $(F,q)$ is an $i$-invariant orthogonal bundle, then changing the sign of the isomorphism $(i^*F,i^*q)\cong (F,q)$  transforms the type  $\tau$ into $\tau'=(\tau'_0,\dots,\tau'_{2g+1})$, where $\tau'_j=m-\tau_j$ for all $j$. We denote by 
$$
\cM_{\operatorname{O}_m}^{\pm,\tau} \subset \cM_{\operatorname{O}_m}^{\pm}
$$
the subset corresponding to the orthogonal bundles of type $\tau$ or $\tau'$. We define
\[
\cM_{\Spin_m}^{\pm,\tau} \coloneqq  (\nu^{\pm})^{-1}(\cM_{\operatorname{O}_m}^{\pm,\tau}) \quad \text{and} \quad \cM_{\SO_m}^{\pm,\tau}\coloneqq \mu^{\pm}(\cM_{\Spin_m}^{\pm,\tau}).
\].

\begin{notation} \label{notation:MandU}
	In this paper, we will focus on a specified type $\tau=(1^{2g+1},2g-1)$, so we introduce the following spaces in this special case:
	\[
	\cU \coloneqq \mathcal{\cM}_{\operatorname{O}_{2g}}^{-,\tau},\quad \cV \coloneqq \cM_{\SO_{2g}}^{-,\tau}\quad \text{and} \quad \cM \coloneqq \cM_{\Spin_{2g}}^{-,\tau}.
	\]
	For simplicity, we denote by 
    $$
    \mu \colon \cM \longrightarrow \cV \qquad \text{and}\qquad
    \nu \colon \cM \longrightarrow \cU
    $$ 
    the finite morphisms induced by $\mu^-$ and $\nu^-$, respectively.
\end{notation}

We report in our notation the main result we need from \cite{Ram1981}:

\begin{theorem}[\protect{\cite[Theorem 3]{Ram1981}}]
\label{thm.RamananThm}
    There exists a (non-canonical) isomorphism
    \begin{equation}
    	\label{eq.Ramanan-Isom}
    	\cM \cong X.
    \end{equation}
\end{theorem}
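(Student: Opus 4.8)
Both $X$ and $\cM$ are (smooth) projective varieties of dimension $2g-1$: for $X$ this is the definition, while for $\cM=\cM_{\Spin_{2g}}^{-,\tau}$ one checks that the $i$-invariance together with the type constraint $\tau=(1^{2g+1},2g-1)$ cuts the component $\cM_{\Spin_{2g},[\alpha]}^{-}$ (of dimension $(g-1)\dim\Spin_{2g}$) down to dimension exactly $2g-1$. Hence it suffices to produce a morphism $\Psi\colon X\to\cM$ and to show that it is bijective on closed points; since $\cM$ is normal (indeed smooth along the regularly stable locus, whose complement has codimension $\geq 2$), any such bijective morphism from a smooth variety is an isomorphism. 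The plan is to construct $\Psi$ through the even Clifford algebra of the pencil, check the discrete invariants, and then invert it.

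First I would set up the \emph{even Clifford algebra of the net of quadrics}. Over $\PP^1=\langle q_1,q_2\rangle$ one has the tautological family of quadratic forms $q_t$ on the fixed space $W=\CC^{2g+2}$ (suitably twisted by $\sO_{\PP^1}$); forming the associated sheaf of even Clifford algebras $\cB_0$ on $\PP^1$, its center is a rank-two $\sO_{\PP^1}$-algebra whose relative spectrum is precisely the hyperelliptic double cover $\pi\colon C\to\PP^1$ branched over the $2g+2$ points $r_j$ where $q_t$ degenerates. Thus $\cB_0=\pi_*\cB$ for a sheaf of algebras $\cB$ on $C$, Azumaya away from the Weierstra\ss\ points $W$, and $\cB$-modules carry a canonical orthogonal structure of rank $2g$ coming from the main anti-automorphism of the Clifford algebra, realizing them as $\SO_{2g}$- (and, with the spinor-norm refinement, twisted $\Spin_{2g}$-) bundles on $C$. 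Now a closed point $x\in X$ is a vector $x\in W$ with $q_t(x)=0$ for every $t$; left multiplication $\ell_x$ inside the full Clifford-algebra bundle over $\PP^1$ then satisfies $\ell_x^2=q_t(x)=0$, so $\operatorname{im}(\ell_x)$ is a submodule over the even part, and, organized over $C$, it yields a locally free $\cB$-module $\cF_x$, i.e. a point of $\cM_{\Spin_{2g}}^{-}$. Carrying the same construction out on $X\times C$ with the tautological point of $X$ gives a family of $\cB$-modules flat over $X$, hence, by the coarse moduli property, a morphism $\Psi\colon X\to\cM_{\Spin_{2g}}^{-}$.

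It then remains to pin down that $\Psi$ actually lands in $\cM$ and is bijective. For the former one checks: \emph{(i)} semistability of $\cF_x$, via Definition~\ref{defn.semi-stability-orthogonal-bundles}, by translating isotropic subbundles of $(F_x,q)$ into linear-subspace data for $x$ inside $X$; \emph{(ii)} $i$-invariance $i^*\cF_x\cong\cF_x$, since the hyperelliptic involution $i$ conjugates $\cB$ by the degree-reversing automorphism that interchanges the two square roots of the center, while $x$ is parity-homogeneous so that $\ell_x$ is equivariant; \emph{(iii)} the type is $\tau=(1^{2g+1},2g-1)$ and the component is $\cM^{-}$ with twist $[\alpha]=h^{g-1}\otimes\sO_C(p_{2g+1})$ — this is a local computation of the jump of $\cF_x$ at each Weierstra\ss\ point $p_j$, controlled by the rank drop of $q_t$ near $t=r_j$, together with a global determinant/Pfaffian (Stiefel--Whitney) bookkeeping fixing $w_1(F_x)=\det F_x$ and the spinor norm, the asymmetric entry at $p_{2g+1}$ being exactly what normalizes $[\alpha]$ (cf. the degree-$2$ invariant in Remark~\ref{remark:killing}). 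For bijectivity I would build the inverse on points by pushing the $\cB$-module structure of a given $[\cF]\in\cM$ back down to $\PP^1$ to recover the pencil $q_t$ on a $(2g+2)$-dimensional space and, with it, the distinguished null vector $x$; checking that the two assignments are mutually inverse on closed points finishes the argument. The main obstacle is step \emph{(iii)}: matching the precise discrete invariants $(1^{2g+1},2g-1)$, $[\alpha]$ and the sign $\cM^{-}$ requires a careful analysis of the degenerating Clifford algebra at all $2g+2$ branch points and a global sign/determinant computation; a secondary obstacle is promoting the pointwise bijection to a morphism of varieties, which is where the relative construction over $X$ and the GIT properties of $\cM$ (including the codimension and normality statements already recorded) are needed. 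This is exactly the content of \cite[Theorem~3]{Ram1981}.
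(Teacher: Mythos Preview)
The paper does not prove this theorem: immediately after the statement it says ``We will not reprove Ramanan's theorem, but for the purpose of our proof we will present below the construction of some of the related morphisms.'' What the paper does do (Subsections~\ref{ss.X-to-OrthBundles} and~\ref{ss.cM-to-X}) is spell out both directions of Ramanan's isomorphism, and these constructions are quite different from your proposal. In one direction, given $[V]\in X$ one forms the rank-$2g$ bundle $\widetilde{N}=V^{\perp}/(V\otimes\sO_{\PP^1})$ on $\PP^1$ directly from the pencil, pulls it back to $C$, and performs an explicit Hecke modification along the Weierstra\ss\ points to obtain the $i$-invariant orthogonal bundle $(F,q)$ (Proposition~\ref{prop.defineF}). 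In the other direction, given $[\cF]\in\cM$ with associated orthogonal bundle $(F,q)$, one sets $E=F\otimes\alpha$, shows $\dim H^0(C,E)^-=1$, and recovers $[V]\in X$ by evaluating a generator of $H^0(C,E)^-$ at the Weierstra\ss\ points via \eqref{eq.Embedding-H0(C,E)-}. No Clifford-algebra machinery enters.

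Your approach has a genuine gap. The even Clifford algebra $\cB_0$ of a pencil of quadrics on a $(2g+2)$-dimensional space, viewed on $C$, is (away from $W$) an Azumaya algebra of rank $4^{g}$; its simple modules are spinor bundles of rank $2^{g}$, not rank-$2g$ orthogonal bundles. So the assertion that ``$\cB$-modules carry a canonical orthogonal structure of rank $2g$'' is not correct as stated, and the object $\operatorname{im}(\ell_x)$ you build from a null vector $x$ is a spinor-type module, not a point of $\cM_{\Spin_{2g}}^{-}$ in the sense used here (twisted $\Spin_{2g}$-principal bundles, equivalently rank-$2g$ orthogonal bundles with a spin lift). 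Passing from a Clifford module to a rank-$2g$ orthogonal bundle requires an additional, nontrivial step that you have not supplied; in Ramanan's argument this step is exactly the $V^{\perp}/V$ construction plus the Hecke modification. Incidentally, the type computation $\tau=(1^{2g+1},2g-1)$ that you flag as the main obstacle is, in Ramanan's set-up, read off almost immediately from the elementary modification; the real missing ingredient in your outline is the construction of $F$ itself.
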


The isomorphism depends on the choice of an $i$-invariant square root of $h^{2g-1}$. Throughout this paper, we will work with the isomorphism determined by the line bundle $\alpha$ from \eqref{notation-alpha}.
We will not reprove Ramanan's theorem, but for the purpose of our proof we will present below the construction of some of the related morphisms. 

\subsection{Partitions of $W$ and quotients of $\cM$}
\label{ss.Quotient-Isom}

We will now describe the quotient maps $\mu$ and $\nu$. We start by recalling the following result.

\begin{lemma}[\protect{\cite[Lemma 2.1]{DesaleRamanan1976}}]
\label{lem.PartitionW}
    There is a one-to-one correspondence between line bundles $\eta$ of fixed even (resp. odd) degree such that $\eta^2\cong h^{\deg\eta}$, and the set of partitions $W=S\cup T$ of the set of Weierstra\ss\ points into subsets of even (resp. odd) cardinality.
\end{lemma}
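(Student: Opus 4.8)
The plan is to exhibit the bijection as a correspondence that attaches to a line bundle $\eta$ with $\eta^{2} \cong h^{\deg\eta}$ the set $S \subset W$ of Weierstra\ss\ points at which $\eta$ and a fixed ``square root'' reference bundle disagree, and to check it is well-defined and invertible using the structure of $\operatorname{Pic}(C)/\pi^{*}\operatorname{Pic}(\PP^{1})$. Concretely, recall that on the hyperelliptic curve $C$ one has $\sO_{C}(p_{j}+p_{k}) \cong h$ for any two Weierstra\ss\ points (since $p_{j}+p_{k}$ is a fibre of $\pi$), hence $\sO_{C}(2p_{j}) \cong h$ for every $j$, so each $\sO_{C}(p_{j})$ is a square root of $h$. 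More generally, for a subset $S \subseteq W$ of cardinality $s$, the bundle $\eta_{S} \coloneqq \sO_{C}\big(\sum_{p \in S} p\big) \otimes h^{\lfloor \cdot \rfloor}$-type correction squares to $h^{s}$ up to a power of $h$; precisely $\eta_{S}^{2} \cong \sO_{C}(2\sum_{p\in S}p) \cong h^{s}$. So $S \mapsto \eta_{S}$ already produces, for each $S$ of the right parity of cardinality, a line bundle of the prescribed degree squaring to $h^{\deg}$ — the only subtlety is adjusting by powers of $h$ to land in a fixed degree, which is harmless since $h$ itself is a square root of $h^{2}$.

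The key step is injectivity and surjectivity of $S \mapsto \eta_{S}$ modulo the relation that generates the identifications. First I would observe that $\eta_{S} \cong \eta_{T}$ (after matching degrees) forces $\sO_{C}\big(\sum_{p\in S}p - \sum_{p \in T}p\big)$ to be a power of $h$, i.e. pulled back from $\PP^{1}$; examining this divisor class on $C$ and using that the only effective divisors supported on $W$ that are pullbacks from $\PP^{1}$ are sums of full fibres $p_{j}+p_{k}$, one deduces $S$ and $T$ differ by adding/removing pairs $\{p_{j},p_{k}\}$, but since any two single Weierstra\ss\ points already satisfy $\sO_{C}(p_{j}) \cong \sO_{C}(p_{k})$ twisted by a power of $h$... — here one must be careful: in fact $\sO_{C}(p_{j}-p_{k})$ is a nontrivial $2$-torsion point, so distinct subsets $S$ give genuinely distinct $\eta_{S}$ unless $S$ and $T$ are complementary after accounting for the $\eta \leftrightarrow \eta \otimes h^{k}$ ambiguity. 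This is exactly why the statement quotients by taking $S \cup T = W$ as an unordered partition: $\eta_{S}$ and $\eta_{W \setminus S} = \eta_{S}^{-1} \otimes h^{\#W}$-ish are exchanged by $\eta \mapsto \eta^{\vee} \otimes h^{g}$, which preserves the defining equation. For surjectivity, given any $\eta$ with $\eta^{2} \cong h^{\deg\eta}$, the point $\eta \otimes \sO_{C}(-\tfrac{\deg\eta}{2}\cdot\text{(fibre)})$ (or the odd analogue) is a $2$-torsion line bundle, and on a hyperelliptic curve every $2$-torsion point is of the form $\sO_{C}\big(\sum_{p\in S}p - \tfrac{s}{2}\cdot\text{fibre}\big)$ for $S \subseteq W$ of even cardinality — this is the classical description of $J_{2}(C)$ for hyperelliptic curves. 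Reading this off gives the subset $S$.

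I would structure the write-up as: (1) fix the reference square roots $\sO_{C}(p_{j})$ of $h$ and note $\sO_{C}(p_{j}+p_{k}) \cong h$; (2) define $S \mapsto \eta_{S}$ with the appropriate twist by a power of $h$ to normalise the degree, and verify $\eta_{S}^{2} \cong h^{\deg \eta_{S}}$; (3) prove the map descends to unordered partitions $W = S \cup T$ via the duality $\eta \mapsto \eta^{\vee}\otimes h^{g}$, hence is well-defined on partitions; (4) prove injectivity by showing $\eta_{S} \cong \eta_{S'}$ forces $S = S'$ or $S = W \setminus S'$, using that the difference lies in $\pi^{*}\operatorname{Pic}(\PP^{1})$ and analysing $2$-torsion; (5) prove surjectivity via the classical parametrisation of $J_{2}(C)$ by even subsets of $W$. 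The main obstacle is step (4)–(5): keeping the bookkeeping of degrees and the $h$-twists consistent so that the two-to-one ambiguity $S \leftrightarrow W\setminus S$ on line bundles matches exactly the unordered-partition quotient on the combinatorial side, and invoking the hyperelliptic description of $2$-torsion cleanly rather than re-deriving it. Since this is \cite[Lemma 2.1]{DesaleRamanan1976}, I would keep the argument brief and cite the classical $J_{2}(C)$ computation, spelling out only the normalisation of $\eta_{S}$ needed for our later use with $\alpha = h^{g-1}\otimes \sO_{C}(p_{2g+1})$.
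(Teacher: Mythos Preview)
The paper does not actually prove this lemma; it is quoted verbatim from \cite[Lemma 2.1]{DesaleRamanan1976} and used as a black box. What the paper \emph{does} add, immediately after the statement, is a description of the bijection: the hyperelliptic involution $i$ lifts to $\eta$, and the partition $W = S \cup T$ is read off from the sign of $i$ on the fibres $\eta_{p_j}$, with $S$ the $+1$-locus and $T$ the $-1$-locus. This $i$-action description is what is used downstream (for the definition of the type $\tau$ in Definition~\ref{defn:type}, and for the $\Upsilon$-action in \S\ref{ss.Quotient-Isom}).

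Your approach is different in flavour: you build the inverse map explicitly as $S \mapsto \eta_S \coloneqq \sO_C\big(\sum_{p\in S}p\big)$, twisted by a power of $h$ to hit the target degree, and you reduce both injectivity and surjectivity to the classical description of $J_2(C)$ for a hyperelliptic curve. This is correct and arguably more concrete than the $i$-action route; the key relation you need is $\sO_C\big(\sum_{p\in W} p\big) \cong h^{g+1}$, which is exactly what collapses $S$ and $W\setminus S$ to the same line bundle and accounts for the passage to unordered partitions. Your outline is sound, though the write-up is loose in places (the ``$h^{\lfloor\cdot\rfloor}$-type correction'' and ``$h^{\#W}$-ish'' should be made precise, and the odd-degree reduction to $J_2(C)$ should go through a fixed square root of $h$, e.g.\ $\sO_C(p_0)$, rather than half a fibre).

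If you do write this out, you should also record the dictionary between the two descriptions: for $\eta_S = \sO_C(\sum_{p\in S}p)$ with its natural $i$-linearisation, the sign of $i$ on $\eta_{S}|_{p_j}$ is $-1$ precisely for $p_j \in S$. Without this, your proof is self-contained but disconnected from how the lemma is applied in the paper (e.g.\ the identification of $\alpha = h^{g-1}\otimes\sO_C(p_{2g+1})$ with the involution $\gamma$ changing the sign of $x_{2g+1}$).
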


In fact the action of $i$ can be lifted to $\eta$. Then $S$ (resp. $T$) is the subset of $W$ consisting of points $p\in W$ such that $i_p\colon \eta_p\rightarrow \eta_p$ acts as $+1$ (resp. $-1$).

The group $(\ZZ_2)^{2g+2}$ can be naturally identified to the set consisting of all ordered partition $(S,T)$ of $W$ by sending $a=(a_j)_{0\leq j\leq 2g+1}\in (\Z_2)^{2g+2}$ to the partition:
\[
S\coloneqq \{p_j \in W\mid a_j=0\}\quad \text{and} \quad T\coloneqq \{p_j \in W \mid a_j=1\}.
\]

Let $\Upsilon\cong (\Z_2)^{2g+1}$ be the quotient $(\Z_2)^{2g+2}$ by $-1$ and let $\Upsilon^+\subset \Upsilon$ be the subgroup of elements that correspond to even partitions of $W$. Then Lemma \ref{lem.PartitionW} yields a canonical isomorphism 
\[
\Upsilon^+ \cong J_2(C).
\]

By Lemma \ref{lem.PartitionW}, we can canonically identify $\Upsilon$ to $J_2(C)\cup P$ as sets, where $P$ consists of $i$-invariant line bundles $\xi$ such that $\xi^2\cong h^{2g-1}$. For each $\xi\in P$, we define an action on $\cM_{\Spin_{2g}}^{-,\tau}$ as follows:
\begin{equation}
	\label{eq.Gamma-action-cM}
	\xi\colon \cM \longrightarrow \cM,\quad [\cF] \mapsto [(\xi^*\otimes \alpha) . i^*\cF].
\end{equation}
 This is well-defined by \eqref{formula-ramanan}, 
 moreover the element $\alpha \in P$ acts by  $[\cF] \mapsto [i^*\cF]$.
 Then $\mu$ is exactly the quotient map under the $\Upsilon^+=J_2(C)$ action so that
\[
\cV \cong \cM/\Upsilon^+.
\]
Then $\Upsilon/\Upsilon^+\cong \Z_2=\langle i\rangle$ acts on $\cV$ by sending $[\cF']\in \cV$ to $[i^*\cF']$ so that
\[
\cV/\langle i\rangle \cong \cU.
\]

On the other hand, the group $\Upsilon$ acts on $X$ by changing signs of the coordinates such that $\Upsilon^+$ contains elements that change an even number of coordinates. Thus the isomorphism \eqref{eq.Ramanan-Isom} is $\Upsilon$-equivariant,
and we have:

\begin{theorem}[\protect{\cite[Theorem 1]{Ram1981}}]
\label{theorem:ramanan5}
  We have isomorphisms
    \[
    \cV \cong X/\Upsilon^+ 
    \quad \text{and} \quad  
    \cU \cong X/\Upsilon.
    \]
\end{theorem}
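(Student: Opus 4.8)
The plan is to derive Theorem \ref{theorem:ramanan5} formally from three ingredients already in place: Ramanan's isomorphism $\cM \cong X$ of \eqref{eq.Ramanan-Isom}, the identification $\cV \cong \cM/\Upsilon^+$ coming from the fact that $\mu$ is the quotient under the $\Upsilon^+ = J_2(C)$-action, and the identification $\cU \cong \cV/\langle i\rangle$ together with $\langle i\rangle \cong \Upsilon/\Upsilon^+$. The only genuine content is to check that \eqref{eq.Ramanan-Isom} is equivariant for the $\Upsilon$-action (acting on $\Upsilon^+ = J_2(C)$ by $2$-torsion twists and on the coset $P$ via \eqref{eq.Gamma-action-cM}) on the source and the coordinate-sign-change action on the target. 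Granting this, $\cV \cong \cM/\Upsilon^+ \cong X/\Upsilon^+$, and then
\[
\cU \cong \cV/\langle i\rangle \cong (X/\Upsilon^+)\big/(\Upsilon/\Upsilon^+) \cong X/\Upsilon .
\]

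To set up the equivariance check I would first make both actions explicit. On $X \subset \PP^{2g+1}$, an element $a = (a_j)_{0 \le j \le 2g+1} \in (\Z_2)^{2g+2}$ acts by $x_j \mapsto (-1)^{a_j} x_j$; this fixes $q_1$ and $q_2$, hence stabilises $X$, the element $-1$ acts trivially, and under Lemma \ref{lem.PartitionW} the subgroup $\Upsilon^+$ is precisely the group of sign changes on an even number of coordinates. On $\cM$, the subgroup $\Upsilon^+ = J_2(C)$ acts, via \eqref{exact_seq_spin_SO} and \eqref{formula-ramanan}, by tensoring the underlying bundle by the corresponding $2$-torsion line bundle, while any representative of the nontrivial coset of $\Upsilon/\Upsilon^+$ — for instance $\alpha \in P$ itself — acts by $[\cF] \mapsto [i^*\cF]$, as recorded after \eqref{eq.Gamma-action-cM}.

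The main obstacle is matching these two actions through Ramanan's construction of \eqref{eq.Ramanan-Isom}, which is where one opens up \cite{Ram1981}. Concretely I would check: (i) the involution $[\cF] \mapsto [i^*\cF]$ on $\cM$ goes over to the single sign change $x_{2g+1} \mapsto -x_{2g+1}$ on $X$ — this is exactly the statement recalled around \eqref{eq.Isom-X-M}, and it handles one coset representative; (ii) the twist by a $2$-torsion bundle $\eta \in J_2(C)$ goes over to the corresponding even sign change, which one extracts from the way the net of quadrics through a point of $X$ determines the orthogonal bundle and from how the bijection of Lemma \ref{lem.PartitionW} trades $2$-torsion twists for partitions of the Weierstra\ss\ points. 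Since $\Upsilon$ is generated by $\Upsilon^+$ together with the class of $i$ (the remaining single sign changes $x_j \mapsto -x_j$ for $j \le 2g$ being $\Upsilon^+$-translates of $x_{2g+1} \mapsto -x_{2g+1}$), (i) and (ii) yield full $\Upsilon$-equivariance, and the assembly above then completes the proof; everything beyond step (ii) is bookkeeping with the exact sequences \eqref{exact_seq_spin_SO} and with formula \eqref{formula-ramanan}.
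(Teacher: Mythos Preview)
Your proposal is correct and matches the paper's approach exactly: the paper does not give an independent proof of Theorem \ref{theorem:ramanan5} but simply records that the isomorphism \eqref{eq.Ramanan-Isom} is $\Upsilon$-equivariant (with the two actions you describe) and then cites \cite[Theorem 1]{Ram1981}, so the quotients $\cM/\Upsilon^+ \cong \cV$ and $\cM/\Upsilon \cong \cU$ transfer to $X$. Your outline of how to verify equivariance by checking it on $\Upsilon^+$ and on the coset representative $\alpha$ (acting as $i^*$) is in fact more explicit than what the paper writes down.
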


The geometry of the quotients is well-understood:

\begin{corollary}
We have $\cU \cong \PP^{2g-1}$ and $\cV\cong Z$ where $Z$ is the subvariety of the weighted projective space $\PP(1^{2g+2},g+1)$ defined by the three equations
	\[
	\sum_{j=0}^{2g+1} y_j =0, \quad \sum_{j=0}^{2g+1} \lambda_j y_{j} = 0, \quad y_{2g+2}^2 - \prod_{j=0}^{2g+1} y_j = 0,
	\]
	where $[y_0:\dots:y_{2g+2}]$ are the weighted homogeneous coordinates on $\PP(1^{2g+1},g+1)$.
\end{corollary}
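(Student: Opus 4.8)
The plan is to deduce the statement from Theorem~\ref{theorem:ramanan5}, which identifies $\cU$ with $X/\Upsilon$ and $\cV$ with $X/\Upsilon^+$, and then to compute these two finite quotients by invariant theory, using that $X$ is projectively normal with homogeneous coordinate ring $R\coloneqq\CC[x_0,\dots,x_{2g+1}]/(q_1,q_2)$ (a smooth complete intersection of dimension $2g-1\geq 3$).

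For $\cU$ I would first note that the full sign-change group $G\coloneqq(\ZZ_2)^{2g+2}$ acts linearly on $\CC[x_0,\dots,x_{2g+1}]$ with invariant subring $\CC[y_0,\dots,y_{2g+1}]$, $y_j\coloneqq x_j^2$; since $\Upsilon=G/\langle-\mathrm{id}\rangle$ and $-\mathrm{id}$ acts trivially on $\PP^{2g+1}$, the squaring map $[x_j]\mapsto[x_j^2]$ exhibits $\PP^{2g+1}/\Upsilon\cong\PP^{2g+1}$. As $q_1=\sum_j x_j^2$ and $q_2=\sum_j\lambda_j x_j^2$ are $G$-invariant and $G$ is finite, one gets $R^{G}\cong\CC[y_0,\dots,y_{2g+1}]/(\sum_j y_j,\ \sum_j\lambda_j y_j)$, whose $\operatorname{Proj}$ is the linear subspace $\PP^{2g-1}\subset\PP^{2g+1}$; hence $\cU\cong X/\Upsilon\cong\PP^{2g-1}$.

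For $\cV$ I would let $E\subset G$ be the even-weight subgroup, which contains $-\mathrm{id}$ (since $2g+2$ is even) and whose image in $\Upsilon$ is exactly $\Upsilon^+$, so that $X/\Upsilon^+\cong\operatorname{Proj}R^{E}$. The key computation is that a monomial $\prod_j x_j^{m_j}$ is $E$-invariant precisely when $(m_j)\bmod 2$ lies in the $\mathbb{F}_2$-orthogonal complement of the hyperplane $\{\sum a_j=0\}$, i.e. when all the $m_j$ are even or all are odd; consequently $\CC[x_0,\dots,x_{2g+1}]^{E}$ is generated by $y_0,\dots,y_{2g+1}$ together with $z\coloneqq x_0x_1\cdots x_{2g+1}$, and comparing $\CC[y_0,\dots,y_{2g+1}]$-module structures shows $\CC[x]^{E}$ is free of rank $2$ with basis $\{1,z\}$, matching $\CC[Y_0,\dots,Y_{2g+1},Z]/(Z^2-\prod_j Y_j)$, whence the only relation is $z^2=\prod_j y_j$. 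Thus $\CC[x]^{E}\cong\CC[y_0,\dots,y_{2g+1},z]/(z^2-\prod_j y_j)$ with $\deg y_j=2$, $\deg z=2g+2$; imposing the $E$-invariant equations $q_1,q_2$ and rescaling the grading by $\tfrac12$ (renaming $z=y_{2g+2}$) then identifies $\operatorname{Proj}R^{E}$ with the subvariety $Z\subset\PP(1^{2g+2},g+1)$ cut out by the three displayed equations, giving $\cV\cong X/\Upsilon^+\cong Z$.

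The main obstacle is the invariant-theory bookkeeping for $E$: verifying that $\CC[x]^{E}$ carries exactly the single relation $z^2=\prod_j y_j$, and checking that forming $E$-invariants commutes with passing to the quotient ring — here one uses that $E$ is finite, hence linearly reductive, and that $(q_1,q_2)$ is generated by $E$-invariants, so the Reynolds operator yields $(\CC[x]/(q_1,q_2))^{E}=\CC[x]^{E}/(q_1,q_2)$. One also invokes the classical fact that the quotient of a projectively normal projective variety by a finite group acting through the grading of its coordinate ring is the $\operatorname{Proj}$ of the invariant subring; since $-\mathrm{id}\in E$ acts trivially on $X$ and all odd-degree $E$-invariants vanish, quotienting $X$ by $\Upsilon^+$ is the same as quotienting $R$ by $E$, which is what makes the regrading by $\tfrac12$ legitimate.
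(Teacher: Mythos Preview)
Your proposal is correct and follows essentially the same approach as the paper: both deduce the result from Theorem~\ref{theorem:ramanan5} and then identify the two finite quotients via the invariants $y_j=x_j^2$ and $z=\prod_j x_j$. The paper's proof is terser --- it simply writes down the explicit quotient maps $[x_j]\mapsto[x_j^2]$ and $[x_j]\mapsto[x_j^2:\prod_j x_j]$ and asserts they realise $X/\Upsilon$ and $X/\Upsilon^+$ --- while you supply the invariant-theoretic justification (computing $\CC[x]^E$, using the Reynolds operator to pass to $R^E$, and regrading) that underlies those assertions.
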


\begin{proof}
	This follows from the fact that the following map
	\[
	X\subset \PP^{2g+1} \longrightarrow \PP^{2g+1}, \quad [x_0:\dots:x_{2g+1}] \longmapsto [x_0^2:\dots:x_{2g+1}^2]
	\]
	is the quotient of $X$ by the $\Upsilon$-action, and it factors through the map
	\[
	X\subset \PP^{2g+1} \longrightarrow \PP(1^{2g+2},g+1),\quad [x_0:\dots:x_{2g+1}] \longmapsto \left[x_0^2:\dots:x_{2g+1}^2:\prod_{j=0}^{2g+1} x_j \right],
	\]
    which is the quotient by $\Upsilon^+$.
\end{proof}

\subsection{Description of the morphism $X \rightarrow \cU$}
\label{ss.X-to-OrthBundles}

Following \cite{Ram1981}, we explain how to construct an  $i$-invariant orthogonal bundle $(F,q)$ of rank $2g$ and type $\tau$ starting with a point in $X$. 

\begin{notation} \label{notationU}
For use in Subsection \ref{ss.cM-to-X}, let us refine the notation in the Setup \ref{setup-odd}: for each $0\leq j\leq 2g+1$, let $U_j$ be the $j$-th coordinate line with $q^j\colon S^2 U_j\rightarrow h_{p_j}^{2g-1}\cong \CC$ (later on it will become clear why we have specified $h_{p_j}^{2g-1}$ instead of simply $\CC$) a fixed non-degenerate quadratic form.
We write
$$
U \coloneqq  \bigoplus_{j=0}^{2g+1} U_j \cong \CC^{2g+2}.
$$
\end{notation}

Then $X\subset \PP^{2g+1}=\PP U^*$ and the pencil $\langle q_1,q_2\rangle = \PP^1$ can be seen as a morphism
$$
\holom{\bar q}{S^2 (U \otimes \sO_{\PP^1})}{\sO_{\PP^1}(1)},
$$
so that in every point $t=[a:b] \in \PP^1$ the corresponding quadratic form is given by $q_t = a q_1 + b q_2$.
The morphism $\bar q$ 
induces a morphism 
$$
U \otimes \sO_{\PP^1} \longrightarrow U^* \otimes \sO_{\PP^1}(1),
$$
which is an isomorphism outside $\Delta$. For any $0\leq j\leq 2g+1$, let $r_j = [\lambda_j:-1]$ be the $j$-th branch point.
The quadratic form $q_{r_j}=\lambda_j q_1-q_2$ is degenerate with
kernel the $j$-th coordinate line so we obtain an exact sequence of vector spaces
\[
0\longrightarrow \ell_j  \longrightarrow U \longrightarrow  U^*,
\]
where $\ell_j$ is the $j$-th coordinate line, i.e. the vertex of the singular hyperquadric $\{ \lambda_j q_1-q_2 = 0 \} \subset \PP U^*$.

Let now $[V] \in X$ be a point, so that $V \subset U$ is a $1$-dimensional subspace isotropic with respect to any quadratic form in the pencil. Then 
$V \otimes \sO_{\PP^1} \subset U \otimes \sO_{\PP^1}$ is a trivial 
subbundle that is contained in $\ker \bar q$. We denote by
$$
V^\perp \subset U \otimes \sO_{\PP^1}
$$
the orthogonal to $V \otimes \sO_{\PP^1}$ with respect to $\bar q$. 

Since $X$ is smooth, the subspace $\ell_j$ is not contained in $V$. Now a straightforward computation shows that $V^{\perp}$ has constant rank $2g+1$ over $\PP^1$. 
Moreover since the quadratic form $\bar q$ has values in $\sO_{\PP^1}(1)$
we obtain that $V^{\perp}$ has degree $-1$.
Therefore
$$
V^\perp /(V \otimes \sO_{\PP^1}) \subset U/V \otimes \sO_{\PP^1}
$$
is a vector bundle on $\PP^1$ of rank $2g$ and degree $-1$.

\begin{lemma}
\label{lem_trivial_factor}
For any $[V] = x \in X$ we have:
$$
V^\perp/(V\otimes \sO_{\PP^1})\cong \sO_{\PP^1}^{\oplus 2k-1}\oplus \sO_{\PP^1}(-1).
$$
In fact the trivial part corresponds to $T_{X,x}$, i.e. we have a natural isomorphism
$$
V^\perp/(V\otimes \sO_{\PP^1}) 
\simeq (T_{X,x} \otimes V) \otimes \sO_{\PP^1} \oplus \sO_{\PP^1}(-1).
$$
\end{lemma}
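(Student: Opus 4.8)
The plan is to first pin down the splitting type of $Q\coloneqq V^\perp/(V\otimes\sO_{\PP^1})$ by a short cohomological argument, and then to identify its subbundle generated by global sections with $(T_{X,x}\otimes V)\otimes\sO_{\PP^1}$, using the standard description of the Zariski tangent space of a complete intersection of quadrics.

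For the splitting type: by the discussion preceding the lemma, $Q$ is locally free of rank $2g$ and degree $-1$, so Grothendieck's theorem gives $Q\cong\bigoplus_{i=1}^{2g}\sO_{\PP^1}(a_i)$ with $\sum_i a_i=-1$. As the $a_i$ are integers summing to $-1$, it suffices to show $a_i\le 0$ for all $i$, i.e. $H^0(\PP^1,Q(-1))=0$; this forces exactly one $a_i$ to be $-1$ and the rest to vanish. Twisting $0\to V\otimes\sO_{\PP^1}\to V^\perp\to Q\to 0$ by $\sO_{\PP^1}(-1)$ and using $H^0(\sO_{\PP^1}(-1))=H^1(\sO_{\PP^1}(-1))=0$ identifies $H^0(\PP^1,Q(-1))$ with $H^0(\PP^1,V^\perp(-1))$, which injects into $H^0(\PP^1,(U\otimes\sO_{\PP^1})(-1))=0$ since $V^\perp$ is a subsheaf of $U\otimes\sO_{\PP^1}$. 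Hence $Q\cong\sO_{\PP^1}^{\oplus 2g-1}\oplus\sO_{\PP^1}(-1)$.

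For the trivial part: fix $v_0$ spanning $V$. Using $H^1(V\otimes\sO_{\PP^1})=0$ in the same sequence gives $H^0(\PP^1,Q)\cong H^0(\PP^1,V^\perp)/V$, and a global section of the subbundle $V^\perp\subset U\otimes\sO_{\PP^1}$ is a constant $u\in U$ with $u\in(V^\perp)_t=\{w\in U\mid q_t(v_0,w)=0\}$ for all $t\in\PP^1$, i.e. $q_1(v_0,u)=q_2(v_0,u)=0$. Thus $H^0(\PP^1,V^\perp)=U_{v_0}\coloneqq\{u\in U\mid q_1(v_0,u)=q_2(v_0,u)=0\}$ and $H^0(\PP^1,Q)\cong U_{v_0}/V$. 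On the other hand, differentiating the two quadratic equations at $x=[v_0]$ identifies $T_{X,x}$ with $\{\phi\in\Hom(V,U/V)\mid q_k(v_0,\phi(v_0))=0,\ k=1,2\}=\Hom(V,U_{v_0}/V)$ (the linear forms $q_k(v_0,\cdot)$ descend to $U/V$ because $V$ is $q_k$-isotropic), and since $V$ is one-dimensional, tensoring by $V$ gives a canonical isomorphism $T_{X,x}\otimes V\cong U_{v_0}/V\cong H^0(\PP^1,Q)$. Finally, by the splitting type the evaluation morphism $H^0(\PP^1,Q)\otimes\sO_{\PP^1}\to Q$ is injective with locally free cokernel $\cong\sO_{\PP^1}(-1)$, so it realises $H^0(\PP^1,Q)\otimes\sO_{\PP^1}$ as the canonical trivial subbundle $Q_0\subset Q$ with $Q/Q_0\cong\sO_{\PP^1}(-1)$; together with the identification $H^0(\PP^1,Q)\cong T_{X,x}\otimes V$ this yields $Q\cong(T_{X,x}\otimes V)\otimes\sO_{\PP^1}\oplus\sO_{\PP^1}(-1)$ with canonical trivial factor.

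Every step is routine once set up; the only point requiring care is the bookkeeping of identifications — in particular the tensoring by the line $V$, which turns the $\Hom$-space $T_{X,x}$ into the vector space $U_{v_0}/V$, and the matching of the deformation-theoretic description of $T_{X,x}$ with the sheaf-theoretic definition of $V^\perp$. I do not anticipate a genuine obstacle.
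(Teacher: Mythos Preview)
Your proof is correct and lands on the same identification as the paper: the trivial factor is $(V^{\perp_{q_1}}\cap V^{\perp_{q_2}})/V \cong T_{X,x}\otimes V$. The only difference is the order of operations. The paper argues constructively: it first observes the inclusion $(V^{\perp_{q_1}}\cap V^{\perp_{q_2}})\otimes\sO_{\PP^1}\subset V^\perp$ (since every $q_t$ is a linear combination of $q_1,q_2$), passes to the quotient by $V$ to get a trivial subbundle of rank $2g-1$, and then reads off the quotient $\sO_{\PP^1}(-1)$ from the degree and the vanishing of $\mathrm{Ext}^1(\sO_{\PP^1}(-1),\sO_{\PP^1})$. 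You instead establish the splitting type first via Grothendieck's theorem and the vanishing $H^0(Q(-1))\hookrightarrow H^0((U\otimes\sO_{\PP^1})(-1))=0$, and only afterwards identify $H^0(Q)$ with $U_{v_0}/V$ through the long exact sequence. Both routes are short; the paper's is marginally more direct since it avoids invoking Grothendieck explicitly, while yours makes the numerical constraint $\sum a_i=-1$, $a_i\le 0$ transparent.
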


\begin{remark} 	\label{rem.Trivial-factor}
Let us recall that given a smooth quadric 
$$
Q' = \{ q' = 0 \} \subset \PP U^*
$$
and a point $[V] = x \in Q'$, the tangent space
$T_{Q',x}$ is canonically isomorphic to $(V^{\perp_{q'}}/V) \otimes V^*$. Thus if $[V] = x \in X$ we have a canonical isomorphism
$$
T_{X,x} \simeq  (V^{\perp_{q_1}} \cap V^{\perp_{q_2}}/V) \otimes V^*.
$$
\end{remark}

\begin{proof}[Proof of Lemma \ref{lem_trivial_factor}]
Since $q_t = a q_1 + b q_2$ and $V \subset U$ is isotropic for
both $q_1$ and $q_2$ we have a natural inclusion
$$
(V^{\perp_{q_1}} \cap V^{\perp_{q_2}}) \otimes \sO_{\PP^1}
\subset 
V^\perp \subset U \otimes \sO_{\PP^1}.
$$
Moreover $V \subset V^{\perp_{q_1}} \cap V^{\perp_{q_2}}$, so
$$
(V^{\perp_{q_1}} \cap V^{\perp_{q_2}})/V \otimes \sO_{\PP^1}
\subset V^\perp /(V \otimes \sO_{\PP^1})
$$
is a trivial subbundle of rank $2g-1$.
Since $V^\perp /(V \otimes \sO_{\PP^1})$ has degree $-1$ we have a splitting extension 
$$
0 \rightarrow (V^{\perp_{q_1}} \cap V^{\perp_{q_2}})/V \otimes \sO_{\PP^1} \rightarrow V^\perp /(V \otimes \sO_{\PP^1})
\rightarrow
\sO_{\PP^1} (-1) \rightarrow 0.
$$
The second statement follows from Remark \ref{rem.Trivial-factor}.
\end{proof}

\begin{notation} \label{notationNT}
	For simplicity of notation, we set 
    $$
    \widetilde{N} \coloneqq  V^{\perp}/(V\otimes \sO_{\PP^1})
    $$
and denote by $\tilde T \subset \tilde N$ its trivial factor.
We also denote by
    $$
  T\coloneqq  \pi^*  \tilde T \subset \pi^* \tilde N \eqqcolon  N
    $$
    the induced inclusion on the pull-back.
\end{notation}

Let $\holom{\widetilde{q}}{S^2 \widetilde{N}}{\sO_{\PP^1}(1)}$ be the quadratic form induced by $\bar q$; then $\widetilde{q}$ degenerates
exactly in the branch points $\Delta \subset \PP^1$ and we have an exact sequence
$$
0 \longrightarrow \widetilde{N} \longrightarrow
\widetilde{N}^* \otimes \sO_{\PP^1}(1)
\longrightarrow \bigoplus_{r_j \in \Delta} \CC_{r_j} \longrightarrow 0 
$$
Pulling back to $C$ we obtain
$$
0 \longrightarrow N \longrightarrow N\otimes h \longrightarrow \bigoplus_{r_j \in \Delta} \pi^* \CC_{r_j}  \longrightarrow 0 
$$
Twisting with $\sO_C(-p_{2g+1})$ we obtain an exact sequence
\begin{equation} \label{inclusionNNstar}
\begin{split}
0 \longrightarrow N \otimes \sO_C(-p_{2g+1}) \longrightarrow N^* \otimes \sO_C(p_{2g+1})  \longrightarrow  \bigoplus_{r_j \in \Delta} \pi^* \CC_{r_j}  \longrightarrow 0 .
\end{split}
\end{equation}
Notice that $\pi^* \CC_{r_j}  \simeq \sO_C/\sO_C(-2p_j)$, so it has a natural
map to $\CC_{r_j}$ (seen as the quotient $\sO_C/\sO_C(-p_j)$), and this map identifies to the projection
$$
\bigoplus_{r_j \in \Delta} \pi^* \CC_{r_j}  \longrightarrow
\left(\bigoplus_{r_j \in \Delta} \pi^* \CC_{r_j}  \right)^i
$$
onto the $i$-invariant part. Consider now the surjective morphism
$$
\beta\colon  N^* \otimes \sO_C(p_{2g+1}) 
\longrightarrow \bigoplus_{r_j \in \Delta} \pi^* \CC_{r_j} 
\longrightarrow
\left(\bigoplus_{r_j \in \Delta} \pi^* \CC_{r_j} \right)^i
\longrightarrow 0.
$$

\begin{prop} \label{prop.defineF} 
Let $F \rightarrow C$ be the kernel of the morphism $\beta$.
Then the quadratic form $\widetilde q$ induces on $F$ the structure of an $i$-invariant orthogonal bundle of rang $2g$ and type $\tau=(1^{2g+1},2g-1)$.
Moreover  $\cF\coloneqq (F,q)$ is semistable as an $\operatorname{O}_{2g}$-bundle. 

In conclusion we have constructed a morphism
$$
X \longrightarrow \mathcal U, \qquad x=[V] \ \longmapsto [(F,q)].
$$
\end{prop}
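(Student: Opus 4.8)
The plan is to check, in this order, that $F$ is a vector bundle of rank $2g$ and degree $0$; that $\widetilde q$ endows it with a nondegenerate symmetric $\sO_C$-valued form; that this datum is $i$-invariant of type $(1^{2g+1},2g-1)$; that $(F,q)$ is semistable; and finally that the assignment $x\mapsto[(F,q)]$ is a morphism $X\to\cU$. Since $\beta$ is a surjection from the locally free sheaf $N^*\otimes\sO_C(p_{2g+1})$ of rank $2g$ onto a torsion sheaf supported on $W$, its kernel $F$ is torsion-free on the smooth curve $C$, hence locally free of rank $2g$; as $\deg(N^*\otimes\sO_C(p_{2g+1}))=2g+2$ and the target of $\beta$ has length $2g+2$ (one unit at each of the $2g+2$ Weierstra\ss\ points), we get $\deg F=0$. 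For the form, observe that the pullback of $\widetilde q$, twisted by $\sO_C(-p_{2g+1})$ and using $h\cong\sO_C(2p_{2g+1})$, is exactly the inclusion $A\coloneqq N\otimes\sO_C(-p_{2g+1})\hookrightarrow A^*=N^*\otimes\sO_C(p_{2g+1})$ of \eqref{inclusionNNstar}, i.e. a symmetric $\sO_C$-valued form $b$ on $A$ that is nondegenerate away from $W$. As $A\subset F\subset A^*$, the a priori only rational inverse form $b^{-1}$ on $A^*$ restricts to a symmetric $\sO_C$-valued form $q$ on $F$, and the real content is that $q$ is nondegenerate everywhere, equivalently that $\widetilde q$ induces an isomorphism $F\xrightarrow{\ \sim\ }F^*$. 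This is a local assertion at the Weierstra\ss\ points: the degeneration of $\widetilde q$ over $\PP^1$ has order one, so its pullback along $\pi$, ramified at each $p_j$, has order two, i.e. there is a local frame of $A$ in which $b=\diag(1,\dots,1,t^2)$ with $t$ a local coordinate satisfying $i^*t=-t$; then $F$ has near $p_j$ the local frame $e_1^*,\dots,e_{2g-1}^*,t\,e_{2g}^*$ of $A^*$, and $b^{-1}=\diag(1,\dots,1,t^{-2})$ restricts to the identity in this frame, so $q$ is nondegenerate there. Symmetry of $q$ is inherited from $\widetilde q$.

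For $i$-invariance, note that $\widetilde N$ and $\widetilde q$ live on $\PP^1$, so $N=\pi^*\widetilde N$ and its form carry canonical $i$-linearisations; $\sO_C(-p_{2g+1})$ is $i$-linearisable as $p_{2g+1}$ is $i$-fixed; and $\beta$ is $i$-equivariant, being assembled from the natural quotients $\pi^*\CC_{r_j}=\sO_C/\sO_C(-2p_j)\twoheadrightarrow\sO_C/\sO_C(-p_j)$ at the fixed points $p_j$. Hence $F$ inherits an $i$-linearisation compatible with $q$, so $(i^*F,i^*q)\cong(F,q)$. To compute the type I read off the $i$-action on $F_{p_j}$ in the local frame above. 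For $j\neq 2g+1$ the inclusion $\sO_C(-p_{2g+1})\hookrightarrow\sO_C$ is an isomorphism near $p_j$, so $i$ acts trivially on $A_{p_j}$ and $i^*e_k^*=e_k^*$; the first $2g-1$ vectors of the frame are then $i$-invariant while $t\,e_{2g}^*$ is anti-invariant (because $i^*t=-t$), giving $\tau_j=1$. At $p_{2g+1}$, the canonical section of $\sO_C(p_{2g+1})$ is $i$-invariant and vanishes to first order, so its differential identifies $(\sO_C(p_{2g+1}))_{p_{2g+1}}$ equivariantly with $T_{p_{2g+1}}C$, on which $i$ acts by $-1$; hence $i$ acts by $-\id$ on $A_{p_{2g+1}}$ and the roles are reversed, giving $\tau_{2g+1}=2g-1$. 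So the type is $(1^{2g+1},2g-1)$, up to the flip $\tau\leftrightarrow\tau'$, which is immaterial since $\cU=\cM_{\operatorname{O}_{2g}}^{-,\tau}$ was defined to contain both.

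It remains to prove semistability and to promote the construction to a morphism. By \cite[Proposition 4.2]{Ram1981} it suffices to verify Ramanan's criterion of Definition \ref{defn.semi-stability-orthogonal-bundles}, namely that every proper non-zero isotropic subbundle of $F$ has degree $\leq 0$; this is exactly where the smoothness of $X$ — equivalently the fact, used already in Lemma \ref{lem_trivial_factor}, that no vertex $\ell_j$ is contained in $V$ — enters, and one argues as in \cite{Ram1981}. Once $(F,q)$ is known to be semistable, all the operations above ($V\mapsto V^\perp$, the form $\widetilde q$, the sheaf $F$ with its form) are manifestly functorial, so they go through over an arbitrary base; applying them over the base $X$ itself produces a family of semistable $\operatorname{O}_{2g}$-bundles over $C$ parametrised by $X$ which are $i$-invariant of type $\tau$, hence a morphism $X\to\cM_{\operatorname{O}_{2g}}$ factoring through $\cU$, which is the desired $X\to\cU$, $x=[V]\mapsto[(F,q)]$. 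I expect the main obstacle to be the local analysis at the Weierstra\ss\ points which underlies both the nondegeneracy of $q$ and the exact type — in particular seeing why the order-two degeneration of $\pi^*\widetilde q$ is ``halved'' on passing to $F$, and why the asymmetry at $p_{2g+1}$ is forced by the nontrivial $i$-action on the fibre of $\sO_C(-p_{2g+1})$ — together with the semistability estimate, which can however be taken from \cite{Ram1981}.
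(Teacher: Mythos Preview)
Your proof is correct and follows the same overall strategy as the paper: both defer semistability to \cite[Proposition~5.6]{Ram1981} and obtain $i$-invariance from the $i$-equivariance of $\beta$. The one substantive difference is how the nondegenerate orthogonal structure on $F$ is established. The paper argues globally: since the inclusion $A\coloneqq N\otimes\sO_C(-p_{2g+1})\hookrightarrow F$ dualises to an inclusion $F^*\hookrightarrow A^*$ with the same cokernel as $\kappa\colon F\hookrightarrow A^*$, one reads off an isomorphism $q\colon F^*\xrightarrow{\sim}F$. You instead work in a local frame at each Weierstra\ss\ point, writing the pulled-back form as $\diag(1,\dots,1,t^2)$ and checking that the rational inverse $b^{-1}$ restricts regularly and nondegenerately to $F$. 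Your local computation has the advantage that the same frame immediately yields the type $(1^{2g+1},2g-1)$, which the paper does not verify explicitly; conversely, the paper's cokernel comparison is more conceptual but is only asserted there, and unwinding it amounts to the local picture you have written out.
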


\begin{proof}
For the convenience of the reader we justify in detail
that $(F, q)$ is an orthogonal bundle, the semistability
is shown in \cite[Proposition 5.6]{Ram1981}.

The quadratic form $\widetilde q$ induces an inclusion $
    N \otimes \sO_C(-p_{2g+1}) \hookrightarrow N^* \otimes \sO_C(p_{2g+1})$, which we denote again by $\widetilde{q}$. By construction the morphism factors into a sequence of (strict) inclusions 
    $$
    N \otimes \sO_C(-p_{2g+1}) \hookrightarrow F \stackrel{\kappa}{\hookrightarrow}  N^* \otimes \sO_C(p_{2g+1})
    $$
    The dual of the inclusion $N \otimes \sO_C(-p_{2g+1}) \subset F$ is an inclusion $F^*\hookrightarrow N^* \otimes \sO_C(p_{2g+1})$, whose cokernel is the same as the cokernel of $\kappa:F \hookrightarrow N^* \otimes \sO_C(p_{2g+1})$. Thus the two maps determine 
    an isomorphism $$q:F^*\to F.$$
    Finally, the bundle $N^* \otimes \sO_C(p_{2g+1})$ is $i$-invariant by construction and $F \subset N^* \otimes \sO_C(p_{2g+1})$ is the kernel of
the $i$-equivariant map $\beta$, so $F$ is $i$-invariant.
\end{proof}

\begin{lemma}
	\label{lem.stability-F}
	If the point $x=[V] \in X$ is contained in one of the coordinate hyperplanes $x_i=0$, the vector bundle $F$ defined by Proposition \ref{prop.defineF} is not stable.
\end{lemma}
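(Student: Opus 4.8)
The plan is to show that the hyperplane condition forces the orthogonal bundle $(F,q)$ to split off a rank‑one factor, and that such a splitting prevents stability. \emph{Step 1: splitting of $\widetilde N$.} Write $x=[V]$ with $V\subset U$. The condition $x\in\{x_i=0\}$ means exactly $V\subset U'\coloneqq\bigoplus_{j\neq i}U_j$. Each $q_t$ is diagonal for $U=U'\oplus U_i$, so $V$ is $q_t$‑orthogonal to $U_i$ for every $t$; hence $U_i\otimes\sO_{\PP^1}\subset V^\perp$ and $V^\perp=V^{\perp_{U'}}\oplus(U_i\otimes\sO_{\PP^1})$. Since $\widetilde q$ is induced by $\bar q$, we obtain an orthogonal direct sum of $\sO_{\PP^1}(1)$‑valued quadratic bundles
\[
(\widetilde N,\widetilde q)=(\widetilde N',\widetilde q')\perp(U_i\otimes\sO_{\PP^1},q^i),\qquad \widetilde N'\coloneqq V^{\perp_{U'}}/(V\otimes\sO_{\PP^1}),
\]
where $\widetilde N'$ is the analogue of $\widetilde N$ for the sub‑pencil on $U'$ (discriminant $\{r_j\}_{j\neq i}$) and $q^i$ has a single simple zero, at $r_i$.

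\emph{Step 2: splitting of $F$.} The rest of the construction of Proposition \ref{prop.defineF} preserves this splitting: the hyperelliptic involution acts block‑wise, and in \eqref{inclusionNNstar} the cokernel of $\widetilde q$ on $C$ is the direct sum of that of $\widetilde q'$ (supported on $\{p_j\}_{j\neq i}$) and that of $q^i$ (supported at $p_i$), on which $\beta$ acts separately. Hence $(F,q)=(F',q')\perp(F_i,q_i)$ with $\operatorname{rank}F_i=1$ and $\operatorname{rank}F'=2g-1$. Running \eqref{inclusionNNstar} and the definition of $\beta$ on the $U_i$‑block alone gives $F_i\cong U_i^{*}\otimes\sO_C(p_{2g+1}-p_i)$, a line bundle with $F_i^{\otimes2}\cong\sO_C$, and $F'$ is semistable of rank $2g-1$ and degree $0$ (by Proposition \ref{prop.defineF} applied to $F$), with $\det F'\cong F_i$.

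\emph{Step 3: non‑stability.} Since $(F_i,q_i)$ is an anisotropic line subbundle, $F$ fails to be stable as soon as $F'$ is strictly semistable, i.e. admits an isotropic sub‑line bundle of degree $0$; and since $F$ is semistable any destabilising subbundle automatically has degree $0$. This is where the work lies: one takes an isotropic sub‑line bundle of $\widetilde N'$ (available because $\operatorname{rank}\widetilde N'=2g-1\geq3$, by Tsen's theorem over $\CC(\PP^1)$) and tracks how it is modified by the twist $\sO_C(-p_{2g+1})$ and the passage to the $i$‑invariant part of the discriminant of $\widetilde q'$, the aim being to gain one in degree and thereby land on an isotropic sub‑line bundle of $F'$ of degree $0$. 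Since the non‑stable locus is closed in $X$ and $X\cap\{x_i=0\}$ is irreducible, it suffices to verify this at a general $x$.

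I expect the decomposition in Step 2 (the equivariance bookkeeping) and the degree count in Step 3 — pinning down precisely how the $i$‑equivariant normalisation in \eqref{inclusionNNstar}--$\beta$ acts on isotropic sub‑line bundles near the $p_j$'s — to be the main obstacle. If only ``not regularly stable'' were needed, Step 2 would already suffice, since $\operatorname{id}_{F'}\oplus(-\operatorname{id}_{F_i})$ is then an orthogonal automorphism of determinant $-1$.
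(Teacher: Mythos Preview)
Your Steps 1--2 are correct and in fact already prove the lemma; the confusion is about which notion of stability is at stake. The statement concerns the \emph{vector bundle} $F$, not the orthogonal bundle $(F,q)$ in the sense of Definition \ref{defn.semi-stability-orthogonal-bundles}. Once you have the splitting $F=F'\oplus F_i$ with $F_i$ a line bundle of degree $0$, you are done: $F$ has slope $0$, so the degree-$0$ subbundle $F_i$ (no isotropy condition needed) violates vector-bundle stability. Your Step 3, which tries to produce an isotropic destabiliser inside $F'$ via Tsen's theorem and a delicate degree count, is therefore unnecessary---and, as you yourself flag, incomplete.

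By contrast the paper's proof is more direct and avoids the orthogonal splitting altogether. It simply observes that the coordinate line $\ell_i$ lies in $V^{\perp_{q_t}}$ for all $t$ (because $x\in\{x_i=0\}$), so its image $\bar\ell_i\otimes\sO_{\PP^1}$ sits inside the trivial factor $\widetilde T\subset\widetilde N$. Pulling back to $C$ and composing with $N\hookrightarrow F^*\otimes\sO_C(p_{2g+1})$ gives a map from a trivial line bundle that vanishes exactly at $p_i$; its saturation is then a line subbundle of degree $\geq 1$ in a bundle of slope $1$, so $F^*\otimes\sO_C(p_{2g+1})$ (hence $F$) is not stable. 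This sidesteps the equivariance bookkeeping you worried about in Step 2. Your route, once truncated after Step 2, yields the same destabilising line (up to a twist) together with the extra information that it is actually a direct summand---but at the cost of verifying that Ramanan's Hecke-type modification respects the block decomposition.
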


\begin{proof}
	Up to renumbering, we may assume that $V$ is contained in the hyperplane $\{x_0=0\}$. 
    The coordinate line $\ell_0$ is distinct from the subspace $V \subset U$, so its image
    $\bar{\ell}_0\subset U/V$ is a one-dimensional subspace and 
\[
	\widetilde{L}\coloneqq \bar{\ell}_0\otimes \sO_{\PP^1} \subset U/V \otimes \sO_{\PP^1}
	\]
    is a rank one subbundle.
    Since $[V] \in \{ x_0=0 \}$ the subspace $\ell_0$ is contained in $V^{\perp_{q_{t}}}$ for any $t\in \PP^1$. Thus we have $\widetilde L \subset \widetilde T \subset U/V \otimes \sO_{\PP^1}$.
	
 Set $L\coloneqq \pi^*\widetilde{L}$. Then we have injections of sheaves
	\[
	h\colon L \longrightarrow T \longrightarrow N \longrightarrow F^*\otimes \sO_C(p_{2g+1})
	\]
	and $h$ vanishes exactly over $p_0$. Let $\bar{L}$ be the saturation of $L$ in $F^*\otimes \sO_{C}(p_{2g+1})$. Then there exists a non-zero morphism $L\otimes \sO_{C}(p_0)\rightarrow  \bar{L}$, which implies
	\[
	\deg(\bar{L}) \geq \deg(L) + 1 = 1.
	\]
	Therefore $F^*\otimes \sO_C(p_{2g+1})$ is not stable and so is $F$.
\end{proof}

\begin{lemma}
	\label{lem.dim-h0-tangent}
Given a  point $x=[V] \in X$, let $(F,q)$ be the orthogonal bundle given by Proposition \ref{prop.defineF}. Then one has
	$$
    \dim H^0(C,\wedge^2 F\otimes K_C)^+ \geq  2g-1.
    $$
\end{lemma}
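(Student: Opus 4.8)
The plan is to prove the inequality by an $i$--equivariant Riemann--Roch computation on $C$, carried out on the hyperelliptic quotient $\PP^1$.

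First I would trade $H^0(C,\wedge^2 F\otimes K_C)^+$ for an $H^1$. Because $q$ is $i$--invariant, the isomorphism $q\colon F\xrightarrow{\sim}F^\vee$ is compatible with the chosen lift of $i$ on $F$ and the dual lift on $F^\vee$, hence induces an $i$--equivariant isomorphism $\wedge^2 F\cong(\wedge^2 F)^\vee$. Since $i$ acts trivially on $H^1(C,K_C)\cong\CC$, the Serre duality pairing between $H^1(C,\wedge^2 F)$ and $H^0\bigl(C,(\wedge^2 F)^\vee\otimes K_C\bigr)$ is $i$--invariant, so it restricts to a perfect pairing on $+1$--eigenspaces. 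Therefore $H^0(C,\wedge^2 F\otimes K_C)^+\cong\bigl(H^1(C,\wedge^2 F)^+\bigr)^\vee$, and it suffices to show $h^1(C,\wedge^2 F)^+\ge 2g-1$.

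Next I would compute $\chi(C,\wedge^2 F)^+\coloneqq h^0(C,\wedge^2 F)^+-h^1(C,\wedge^2 F)^+$. Since $\pi$ is finite, $H^q(C,\wedge^2 F)^{\pm}=H^q\bigl(\PP^1,(\pi_*\wedge^2 F)^{\pm}\bigr)$, so this equals $\chi\bigl(\PP^1,(\pi_*\wedge^2 F)^+\bigr)$; and $(\pi_*\wedge^2 F)^+$ is locally free of rank $\binom{2g}{2}=g(2g-1)$. To find its degree I would compare it with $\wedge^2 F$ via the adjunction morphism $\pi^*(\pi_*\wedge^2 F)^+\to\wedge^2 F$, which is an isomorphism over $\PP^1\setminus\Delta$ and, at a Weierstra\ss\ point $p_j$, is injective with cokernel of length $\dim(\wedge^2 F)^-_{p_j}$; this gives
\[
2\deg(\pi_*\wedge^2 F)^+\;=\;\deg(\wedge^2 F)\;-\;\sum_{j=0}^{2g+1}\dim(\wedge^2 F)^-_{p_j}.
\]
Now $\deg(\wedge^2 F)=(2g-1)\deg F=0$, since $q$ forces $(\det F)^{\otimes 2}\cong\sO_C$; and if $F_{p_j}=F_{p_j}^+\oplus F_{p_j}^-$ denotes the eigenspace decomposition of the lifted $i$--action, then $(\wedge^2 F)^-_{p_j}=F_{p_j}^+\otimes F_{p_j}^-$ has dimension $(2g-\tau_j)\tau_j$, which equals $2g-1$ for every $j$ because $\tau_j\in\{1,2g-1\}$ for the type $\tau=(1^{2g+1},2g-1)$. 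Hence $\deg(\pi_*\wedge^2 F)^+=-(g+1)(2g-1)$, and Riemann--Roch on $\PP^1$ yields
\[
\chi(C,\wedge^2 F)^+=\deg(\pi_*\wedge^2 F)^+ + g(2g-1)=-(2g-1).
\]

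Combining the two steps gives
\[
h^0(C,\wedge^2 F\otimes K_C)^+=h^1(C,\wedge^2 F)^+=h^0(C,\wedge^2 F)^++(2g-1)\ \ge\ 2g-1,
\]
as claimed. The step I expect to require the most care is the equivariant bookkeeping in the first paragraph: one has to verify that the lift of $i$ to $F$ implicit in the construction of Proposition \ref{prop.defineF}, the form $q$, and the canonical $i$--structure on $K_C=\pi^*\sO_{\PP^1}(g-1)$ are mutually compatible, so that Serre duality genuinely respects the $\pm$--decomposition; once the normalisations of Setup \ref{setup-odd} are fixed this, together with the local model at the $p_j$ underlying the degree formula, is routine.
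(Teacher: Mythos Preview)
Your proof is correct and rests on the same tool as the paper's, namely an equivariant Riemann--Roch computation for the $i$--invariant bundle in question. The paper, however, applies the formula of \cite[Proposition 2.2]{DesaleRamanan1976} directly to $\wedge^2 F\otimes K_C$ and obtains $\chi^+(\wedge^2 F\otimes K_C)=2g-1$ in one line; from this the inequality is immediate. Your route differs in two respects. First, you insert a Serre duality step to pass from $H^0(C,\wedge^2 F\otimes K_C)^+$ to $H^1(C,\wedge^2 F)^+$; this is valid, but it is precisely here that the delicate equivariance issues you flag arise (compatibility of the $i$--linearisations on $F$, $F^\vee$ via $q$, and on $K_C$), whereas the paper sidesteps them entirely by computing $\chi^+$ of the target bundle. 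Second, rather than citing Desale--Ramanan you rederive the equivariant Euler characteristic by pushing forward to $\PP^1$ and computing $\deg(\pi_*\wedge^2 F)^+$; this is more self-contained and makes the local contribution $(2g-\tau_j)\tau_j=2g-1$ at each Weierstra\ss\ point explicit. In short: your argument is sound, but the Serre duality detour is unnecessary and is the sole source of the bookkeeping you worry about; applying your push-forward computation directly to $\wedge^2 F\otimes K_C$ (taking into account that $i$ acts by $-1$ on each fibre $(K_C)_{p_j}$) would match the paper's proof exactly.
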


\begin{proof}
	By the Riemann--Roch formula for vector bundles over curves and \cite[Proposition 2.2]{DesaleRamanan1976}, we have
	\[
	\dim H^0(C,\wedge^2 F \otimes K_C)^+ - \dim H^1(C,\wedge^2 F\otimes K_C)^+ = \frac{1}{2}\left(d + \sum_{p\in W} r_p\right) - rg,
	\]
	where $r$ is the rank of $\wedge^2 F$, $d$ is the degree of $\wedge^2 F\otimes K_C$ and $r_p$ is the dimension of $(\wedge^2 F\otimes K_C)_p^+$. As $F$ is of type $(1^{2g+1},2g-1)$ and $K_C$ is of type $((-1)^{2g+2})$, we get $r_p=2g-1$ for any $p\in W$. Then the required result follows immediately as $r=g(2g-1)$ and $d=g(g-1)(2g-2)$.
\end{proof}

\begin{notation}
\label{notation.E}
Given a  point $x=[V] \in X$, let $(F,q)$ be the orthogonal bundle given by Proposition \ref{prop.defineF}.  Then we set
$$
E\coloneqq  F \otimes \alpha
$$
where $\alpha$ is the $i$-invariant line bundle from Setup \ref{setup-odd}. 
By \eqref{inclusionNNstar} we have an inclusion $N \otimes \sO_{C}(-p_{2g+1}) \subset F$,
so $E$ is given by the extension 
\begin{equation}
\label{eq_ext_E}
    0\longrightarrow N\otimes h^{g-1} \longrightarrow E \longrightarrow \bigoplus_{{p_j}\in W} E_{p_j}^- \longrightarrow 0.
\end{equation}
In particular we have a map
\begin{equation}
    \label{defineiota}
    \iota\colon N\longrightarrow E\otimes h^{-(g-1)},
\end{equation}
which is an isomorphism outside $W$ and has rank $2g-1$ over $p_j\in W$ for all $j$.
\end{notation}

By Lemma \ref{lem_trivial_factor} the vector bundle $N$ contains a trivial subbundle $T$ induced by $T_{X,x}\otimes V$. For later use we observe:

\begin{lemma}
	\label{lem.inj-iota}
	Assume that $x = [V] \in X\subset \PP U^*$ is not contained in any coordinate hyperplane $\{ x_i=0\}$. Then for any point $y\in C$, the restriction
	\[
	\iota_y|_{T_y}\colon T_y \longrightarrow (E\otimes h^{-(g-1)})_y
	\]
	is injective.
\end{lemma}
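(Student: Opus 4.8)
The claim is that $\iota_y|_{T_y}$ is injective for every $y \in C$, under the hypothesis that $x = [V]$ avoids all coordinate hyperplanes. The plan is to exploit the fact that $\iota$ is already an isomorphism away from the Weierstra\ss{} points $W$, so the only thing to check is injectivity of $\iota_y|_{T_y}$ at the points $y = p_j \in W$. At such a point the map $\iota_{p_j}$ has rank $2g-1$ on a $2g$-dimensional space, so its kernel $\ker(\iota_{p_j}) \subset N_{p_j}$ is exactly one-dimensional; thus it suffices to show that this kernel line is \emph{not} contained in $T_{p_j}$.

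The key step is to identify the kernel line $\ker(\iota_{p_j})$ explicitly. From the extension \eqref{eq_ext_E} defining $E$ (equivalently, from \eqref{inclusionNNstar} before tensoring with $\alpha$), the cokernel of $N \otimes \sO_C(-p_{2g+1}) \hookrightarrow F$ is supported on $W$ and at $p_j$ is the one-dimensional space $F_{p_j}^-$ (the $(-1)$-eigenspace for the lifted involution, which has dimension $\tau_j$; here $\tau_j = 1$ for $j \le 2g$ and $\tau_{2g+1} = 2g-1$, but after the twist by $\sO_C(-p_{2g+1})$ and passage to $E = F \otimes \alpha$ the relevant jump is always of rank one by construction of $\iota$). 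Tracing through the definitions of $N = \pi^*\widetilde N$ and $\beta$, the drop in rank at $p_j$ comes precisely from the degeneracy of $\widetilde q$ at the branch point $r_j \in \Delta$, whose kernel is the image of the coordinate line $\ell_j$ in $\widetilde N_{r_j}$. So I expect $\ker(\iota_{p_j})$ to be the line $\overline{\ell}_j \otimes (\text{fibre of a line bundle}) \subset N_{p_j}$, i.e. the pullback to $p_j$ of the image of the $j$-th vertex.

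Granting this identification, injectivity of $\iota_{p_j}|_{T_{p_j}}$ becomes the statement that $\overline{\ell}_j \notin T_{p_j}$. By Lemma \ref{lem_trivial_factor} (and Remark \ref{rem.Trivial-factor}) the trivial factor $\widetilde T \subset \widetilde N$ at the generic point equals $(V^{\perp_{q_1}} \cap V^{\perp_{q_2}})/V \otimes V^*$, and $T = \pi^* \widetilde T$; so $\overline{\ell}_j \in T_{p_j}$ would force $\ell_j \subset V^{\perp_{q_1}} \cap V^{\perp_{q_2}}$, equivalently $\ell_j \perp_{q_t} V$ for all $t$. But $\ell_j$ is the vertex of $\{q_{r_j} = 0\}$, so $\ell_j \subset \ker q_{r_j} \subset V^{\perp_{q_{r_j}}}$ automatically; the condition $\ell_j \perp_{q_1} V$ (equivalently $\perp_{q_2}$) is a genuine constraint. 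A direct computation in coordinates: writing $V = [v_0 : \cdots : v_{2g+1}]$, the bilinear form $b_{q_1}(\ell_j, V)$ is (up to scalar) $v_j$, which is nonzero exactly because $[V]$ lies on no coordinate hyperplane. Hence $\overline{\ell}_j \notin T_{p_j}$, which completes the argument.

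The main obstacle I anticipate is making the identification of $\ker(\iota_{p_j})$ fully rigorous: one must carefully track how the rank drop in the surjection $\beta$, the twist by $\sO_C(\pm p_{2g+1})$, and the pullback along $\pi$ interact, and check that the distinguished line in $N_{p_j}$ killed by $\iota$ is indeed the pullback of $\overline{\ell}_j \subset (U/V)$. Once the kernel is pinned down, the remaining step is the short coordinate computation above, which only uses smoothness of $X$ (so $\ell_j \not\subset V$) and the hypothesis on coordinate hyperplanes. An alternative, more synthetic route to the same conclusion would be to argue that $\iota_{p_j}|_{T_{p_j}}$ failing to be injective would produce, via the Riemann--Roch count in Lemma \ref{lem.dim-h0-tangent} and the description of the cotangent space in Proposition \ref{prop:intro-cotangent-cM}, a degeneracy incompatible with $F$ being stable — i.e. to deduce it from Lemma \ref{lem.stability-F} — but the direct kernel computation is cleaner and is what I would write.
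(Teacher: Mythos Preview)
Your proposal is correct and follows essentially the same approach as the paper's proof: reduce to $y = p_j \in W$, identify $\ker(\iota_{p_j})$ with the image $\bar{\ell}_j$ of the $j$-th coordinate line, and conclude from the hypothesis $v_j \neq 0$ that $\bar{\ell}_j \not\subset T_{X,x} \otimes V$. The paper is terser, citing the proof of Lemma~\ref{lem.stability-F} for the kernel identification (where it is shown that the composite $L = \pi^*(\bar{\ell}_0 \otimes \sO_{\PP^1}) \hookrightarrow N \to F^* \otimes \sO_C(p_{2g+1}) \cong E \otimes h^{-(g-1)}$ vanishes exactly at $p_0$); your explicit coordinate check $b_{q_1}(\ell_j, V) \propto v_j \neq 0$ is the content of the paper's one-line ``since $x$ is not contained in $\{x_j = 0\}$''.
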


\begin{proof}
	Since $\iota_y$ is an isomorphism if $y\not\in W$, it remains to consider the case where $y\in W$. Without loss of generality, we may assume $y=p_0$. The kernel of the linear map
	\[
	\iota_{p_0}\colon N_{p_0} \longrightarrow (E\otimes h^{-(g-1)})_{p_0}
	\]
    is the image of the coordinate line $\bar l_0 \subset U/V$ (see the proof of Lemma \ref{lem.stability-F} for the construction). 
	Since $x$ is not contained in $\{x_0=0\}$ we have $\bar l_0 \not\subset T_{X,x}\otimes V$, so the restriction $\iota_{r_0}|_{T_{r_0}}$ is injective.
\end{proof}

\subsection{Description of the morphism $\cM \rightarrow X$}
\label{ss.cM-to-X}

Let us now go backwards and construct a point of $[V]\in X$ 
from a point $[\cF]\in \cM$. Let $(F,q) \coloneqq \nu([\cF]) \in \mathcal U$ be the associated orthogonal bundle, and consider the bundle $E\coloneqq F\otimes \alpha$
(cf. Notation \ref{notation.E}).

The quadratic form $q$ induces a quadratic form $S^2 E\rightarrow \alpha^2 \cong h^{2g-1}$, which for simplicity we denote again by $q$.
Then $E_{p_j}^{-}\cong \CC$ for any $0\leq j\leq 2g+1$ and $q|_{E_{p_j}^-}$ is non-degenerate.

As shown in the proof of \cite[Theorem 2]{Ram1981} the spin structure on $\cF$
determines a family $\{\epsilon_j\}_{0\leq j\leq 2g+1}$, which is unique up to $-1$, of orthogonal isomorphisms
\[
\epsilon_j\colon (E_{p_j}^-,q|_{E_{p_j}^-}) \longrightarrow (U_j,q^j),
\]
where $(U_j,q^j)$ are the fixed quadratic forms introduced in Notation \ref{notationU}.
Composing the quadratic map
$$ 
\bigoplus_{j=0}^{2g+1} q^j\colon  S^2 U \longrightarrow \bigoplus_{j=0}^{2g+1} h^{2g-1}_{p_j}\cong H^0(\PP^1,\sO_{\PP^1}(2g-1)\otimes \sO_{\Delta}) 
$$
with the coboundary map in cohomology of the exact sequence
\[
0\longrightarrow \sO_{\PP^1}(-3) \longrightarrow \sO_{\PP^1}(2g-1) \longrightarrow \sO_{\PP^1}(2g-1)\otimes \sO_{\Delta}\longrightarrow 0
\]
yields a map
$$ 
S^2 U \rightarrow H^0(\PP^1,\sO_{\PP^1}(2g-1)\otimes \sO_{\Delta})\twoheadrightarrow H^1(\PP^1,\sO_{\PP^1}(-3)) \cong \CC^2, 
$$
which defines a pencil of quadrics $\langle q_1,q_2\rangle$ on $U$. 

\begin{lemma}
	The complete intersection $X\subset \PP U^*$ defined by $q_1=q_2=0$ is isomorphic to the one defined at the beginning of this section. 
\end{lemma}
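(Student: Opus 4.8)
The plan is to track carefully, through Ramanan's construction of the morphism $\cM \to X$, where the data $(\lambda_j)$ reappears, and to show that the pencil of quadrics extracted from the spin structure is \emph{projectively equivalent} to the original one (up to reordering the branch points, which is harmless since the isomorphism \eqref{eq.Ramanan-Isom} was only claimed to be non-canonical). Concretely, I would argue as follows. The curve $C$ and the branch locus $\Delta = \{r_0,\dots,r_{2g+1}\} \subset \PP^1$ are recovered from the orthogonal bundle $(F,q)$ as intrinsic data: $\Delta$ is exactly the degeneration locus of the induced form $\widetilde q$ on $\widetilde N = V^\perp/(V\otimes\sO_{\PP^1})$, hence $C$ is determined. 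After identifying $\PP^1$ with the pencil line as in Setup \ref{setup-odd}, the points $r_j$ determine the $\lambda_j$ up to a common $\mathrm{PGL}_2$-action on the pencil, which translates into a simultaneous change of basis in $\langle q_1, q_2\rangle$ — precisely the ambiguity that disappears when one asks only for projective equivalence of the complete intersections. So the content of the lemma is that (i) the recovered pencil degenerates exactly over the same $2g+2$ points $r_j$ of $\PP^1$, and (ii) each recovered singular quadric $q_{r_j}$ has its vertex at the line $U_j$; once (i) and (ii) hold, the recovered pencil is, up to the common change of basis, $\sum x_k^2$ and $\sum \lambda_k x_k^2$ in the coordinates dual to the $U_j$, i.e. the original $X$.

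For step (ii) the key is the bookkeeping already set up in Notation \ref{notationU}: the quadratic forms $q^j \colon S^2 U_j \to h_{p_j}^{2g-1}$ take values in the fiber $h_{p_j}^{2g-1}$, not merely in $\CC$, and this is exactly what makes the coboundary construction $S^2 U \to H^1(\PP^1, \sO_{\PP^1}(-3)) \cong \CC^2$ depend on the points $p_j$ (equivalently $r_j$) in the right way. I would compute this coboundary explicitly: choosing a local trivialization of $\sO_{\PP^1}(2g-1)$ near each $r_j$ and using the standard \v{C}ech description of the connecting map for $0 \to \sO_{\PP^1}(-3) \to \sO_{\PP^1}(2g-1) \to \sO_{\PP^1}(2g-1)\otimes\sO_\Delta \to 0$, the pairing $\langle (q^j(u,u))_j, \omega\rangle$ for $\omega \in H^0(\PP^1, K_{\PP^1}\otimes\sO_{\PP^1}(2g+1-2g))^{?}$... more precisely, $H^1(\PP^1,\sO_{\PP^1}(-3))$ is Serre-dual to $H^0(\PP^1, \sO_{\PP^1}(1))$, a $2$-dimensional space with basis the two linear forms $[a:b]$; evaluating the dual pairing one finds that the component of the recovered pencil along the form vanishing at $r_j$ is, up to a nonzero scalar, the form $u \mapsto (\text{residue at }r_j)\cdot q^j(u_j, u_j)$, whose radical is $\bigoplus_{k\neq j} U_k$, i.e. whose vertex is $U_j$. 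This gives (ii), and (i) follows because the residue computation produces a nonzero contribution at every $r_j$, so the two recovered quadrics are linearly independent and their pencil degenerates at exactly the $2g+2$ points $r_j$.

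Finally, to conclude from (i) and (ii): a pencil of quadrics on $U \cong \CC^{2g+2}$ whose $2g+2$ singular members have vertices the distinct coordinate lines $U_0,\dots,U_{2g+1}$ is, after scaling the coordinates and choosing a basis of the pencil, simultaneously diagonalized as $\sum_k x_k^2$ and $\sum_k \mu_k x_k^2$ for some distinct $\mu_k$ determined (up to common $\mathrm{PGL}_2$) by the positions $r_k$ of the singular members in $\PP^1$; since the $r_k$ agree with those of the original pencil, the $\mu_k$ agree with the $\lambda_k$ up to this ambiguity, and the two complete intersections in $\PP U^*$ are projectively equivalent. I expect the main obstacle to be the explicit identification of the residue/coboundary pairing in step (ii) — keeping the twist $h_{p_j}^{2g-1}$ consistent throughout so that the scalar in front of each $q^j$ is genuinely nonzero, and checking that the construction does not secretly collapse the pencil — rather than any conceptual difficulty; the rest is linear algebra of pencils of quadrics, which is classical.
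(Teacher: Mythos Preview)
Your approach is sound but genuinely different from the paper's.

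The paper's proof is much more direct and avoids residues altogether. It observes that the coboundary map $H^0(\sO_{\PP^1}(2g-1)\otimes\sO_\Delta)\twoheadrightarrow H^1(\sO_{\PP^1}(-3))$ has kernel equal to the image of $H^0(\sO_{\PP^1}(2g-1))$; so the whole problem reduces to showing that, for a suitable choice of trivialisations $\sO_{\PP^1}(2g-1)_{r_j}\cong\CC$, the image of the global sections inside $\CC^{2g+2}$ is cut out by the two linear forms $f_1=\sum z_j$ and $f_2=\sum\lambda_j z_j$. Writing down the evaluation of the monomial basis $s_k=(-1)^{k+1}x_0^k x_1^{2g-1-k}$ at the points $r_j=[\lambda_j:-1]$ gives the vectors $v_k=(\lambda_0^k,\dots,\lambda_{2g+1}^k)$, and the existence of a diagonal rescaling $A=\operatorname{diag}(a_0,\dots,a_{2g+1})$ with all $a_j\neq 0$ sending these into $\{f_1=f_2=0\}$ is then a one-line Vandermonde argument: the condition is that $(a_j)$ lie in the kernel of a $(2g+1)\times(2g+2)$ Vandermonde matrix, which has a unique kernel line, and another Vandermonde shows no entry can vanish.

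Your route via the residue pairing and the singular members of the pencil also works, and in fact explains \emph{what} the paper's mysterious rescaling vector is: carrying out your step (ii), the coefficient of $q^k$ in the quadric indexed by $s\in H^0(\sO_{\PP^1}(1))$ is $s(r_k)/P'(r_k)$ with $P=\prod(t-\lambda_m)$, so the diagonal entries $a_j$ are $1/P'(r_j)=\prod_{k\neq j}(\lambda_j-\lambda_k)^{-1}$, precisely the kernel vector the Vandermonde argument produces. One small comment: your opening paragraph about recovering $C$ and $\Delta$ from the orthogonal bundle $(F,q)$ is tangential---the pencil in question is built directly from the exact sequence on $\PP^1$ and the fixed forms $q^j$ of Notation~\ref{notationU}, with $\Delta$ already given; the spin structure only enters afterwards via the $\epsilon_j$. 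You can drop that discussion and go straight to the coboundary computation.
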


\begin{proof}
	We need to prove that there exist trivialisations $\sO_{\PP^1}(2g-1)_{r_j}\cong \CC$ such that the image of $H^0(\PP^1,h^{2g-1})\cong \CC^{2g}$ in $\sO_{\PP^1}(2g-1)\otimes \sO_{\Delta}\cong \CC^{2g+2}$ is the subspace defined by the following two linear forms:
	\[
	f_1\coloneqq \sum_{j=0}^{2g+1} z_j \quad \text{and} \quad f_2 \coloneqq \sum_{j=0}^{2g+1} \lambda_j z_j,
	\]
	where $z_j$ is the coordinate on $\sO_{\PP^1}(2g-1)_{r_j}\cong \CC$. Let 
	\[
	s_k=(-1)^{k+1} x_0^k x_1^{2g-1-k},\quad 0\leq k\leq 2g-1
	\]
	be a basis of $H^0(\PP^1,\sO_{\PP^1}(2g-1))$, where $[x_0:x_1]$ is the homogeneous coordinates on $\PP^1$. Then the coordinate $r_j=[\lambda_j:-1]$ yields a trivilisation $\sO_{\PP^1}(2g-1)_{r_j}\cong \CC$ such that the image of $s_k$ corresponds to the vector
	\[
	v_k\coloneqq (\lambda_0^k,\dots,\lambda_{2g+1}^k) \in \CC^{2g+2}.
	\]
	It remains to show that there exists a diagonal matrix $A\coloneqq \diag(a_0,\cdots,a_{2g+1})$ such that $a_j\not=0$ for any $0\leq j\leq 2g+1$ and $A(v_k)$ satisfies $f_1=f_2=0$ for any $0\leq k\leq 2g-1$. The latter condition is equivalent to say that $(a_0,\dots,a_{2g+1})\in \CC^{2g+2}$ is contained in the kernel of the following linear map
	\[
	B\coloneqq 
    \begin{pmatrix}
	    1   &  \dots & 1 \\
        \lambda_0 & \dots & \lambda_{2g+1} \\
        \vdots    & \vdots & \vdots \\
        \lambda_0^{2g} & \dots & \lambda_{2g+1}^{2g}
	\end{pmatrix}
    \colon \CC^{2g+2} \longrightarrow \CC^{2g+1}.
	\]
	Since the $\lambda_j$'s are pairwise distinct, the Vandermonde determinant  implies that $B$ has rank $2g+1$. In particular, there exists a unique  (up to scaling) non-zero vector $v=(a_0,\dots,a_{2g+1})\in \CC^{2g+2}$ contained in $\ker(B)$. Moreover, if one of the $a_j$'s is zero, saying $a_{2g+1}=0$ for instance, then the non-zero vector $v'=(a_0,\dots,a_{2g})$ is contained in the kernel of the following linear map
    \[
    B'\coloneqq 
    \begin{pmatrix}
	    1   &  \dots & 1 \\
        \lambda_0 & \dots & \lambda_{2g} \\
        \vdots    & \vdots & \vdots \\
        \lambda_0^{2g} & \dots & \lambda_{2g}^{2g}
	\end{pmatrix}
    \colon \CC^{2g+1} \longrightarrow \CC^{2g+1},
    \]
    which is absurd because $B'$ is an isomorphism by the Vandermonde determinant.
\end{proof}

By Proposition \ref{prop.defineF} the vector bundle $E^*\otimes K_C$ is semistable.
Since $\deg(E^*\otimes K_C)=-2g$, we obtain 
\[
H^1(C,E)\cong H^0(C,E^*\otimes K_C)=0.
\]
which implies $H^1(C,E)^-=0$, hence $\dim H^0(C,E)^{-}=1$ by \cite[Proposition 2.2]{DesaleRamanan1976}. Moreover, by \cite[Proposition 4.11]{Ram1981}, the evaluation map
\begin{equation}
	\label{eq.Embedding-H0(C,E)-}
    \begin{tikzcd}[column sep=large]
        H^0(C,E)^{-} \arrow[r, "{(\operatorname{ev}_{p_j})_j}"] 
            & \bigoplus_{j=0}^{2g+1} E_{p_j}^- \arrow[r, "{(\epsilon_j)_j}", "\cong"']
                & \bigoplus_{j=0}^{2g+1} U_j = U
    \end{tikzcd}
\end{equation}
is injective and the image of $H^0(C,E)^{-}$ in $U$ is a line isotropic with respect to $q_1$ and $q_2$. In conclusion we have constructed a morphism
\begin{equation}
    \label{morphismMtoX}
    \cM \longrightarrow X, \qquad [\cF] \ \longmapsto \ [H^0(C,E)^{-}],
\end{equation}
which is the isomorphism of \cite[Thm.3]{Ram1981}. Note that the composition of this isomorphism with the morphism $X \rightarrow \cU$ from Proposition \ref{prop.defineF} identifies to $\nu: \cM \rightarrow \cU$.

\begin{example}
    Let $C$ be a hyperelliptic curve of genus $2$. As explained in Example \ref{ex.Spin4}, the moduli space $\cM_{\Spin_4}^-$ is isomorphic to $\cU_C(2,\alpha)\times \cU_C(2,\alpha)$. Then the morphism $\nu^-\colon \cM_{\Spin_4}^-\rightarrow \cM_{\operatorname{O}_4}^-$ is given by
    \[
    (F_1,F_2) \longmapsto F_1\otimes F_2^*.
    \]
    The subvariety $\cM$ of $\cM^-_{\Spin_4}$ thereby consists of pairs $(F_1,F_2)$ with $F_2\cong i^*F_1$, which allows us to recover the isomorphism $X\cong \cU_C(2,\alpha)$ provided in \cite{Newstead1968}.
\end{example}

\section{The odd dimensional case}
\label{s.Geometric-inter-hM}

This section is devoted to give a characterisation of the cotangent bundle of $\cM$, which allows us to define a Hitchin morphism $h_{\cM}$ for $\cM$. Then we show how $h_{\cM}$ can be related to the morphism $\Phi_X$ via a geometric interpretation. We will work in the Setup \ref{setup-odd}.

\subsection{Skew-symmetric maps}
\label{ss.skew-symmetric-map}

We recall some basic facts concerning skew-symmetric maps. Let $M$ be an $m$-dimensional complex vector space equipped with a non-degenerate symmetric bilinear form $q_M\colon S^2 M\rightarrow \CC$. A linear map $\theta_M\colon M\rightarrow M$ is said to be \emph{skew-symmetric with respect to $q_M$} if we have
\begin{equation}
	\label{eq.Skew-symmetric-defn}
	q_M(\theta_M(v_1),v_2) = - q_M(v_1,\theta_M(v_2))
\end{equation}
for any $v_1$, $v_2\in M$. If we choose the trivialisation $M\cong \CC^m$ such that $q_M$ becomes the standard quadratic form on $\CC^m$, then $\theta_M$ can be written as a skew-symmetric matrix. Moreover, by \eqref{eq.Skew-symmetric-defn} and a dimension counting, the image $\operatorname{Im}(\theta_M)$ is exactly the orthogonal complement of $\ker(\theta_M)$, i.e.,
\[
\ker(\theta_M)^{\perp_{q_M}} = \operatorname{Im}(\theta_M).
\]

\begin{lemma}
	\label{lem.Nilpotent-orth-decomp}
	Let $(M,q_M)$ be an $m$-dimensional complex vector space equipped with a non-degenerate symmetric bilinear form and let $\theta_M\colon M\rightarrow M$ be a skew-symmetric linear map with respect to $q_M$. If $\theta_M$ is not nilpotent and is of rank two, then there exists an orthogonal decomposition 
	\[
	M = \ker(\theta_M) + \operatorname{Im}(\theta_M).
	\]
\end{lemma}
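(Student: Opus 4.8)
The plan is to use the general fact about skew-symmetric maps recalled just before the statement: for $\theta_M$ skew-symmetric with respect to $q_M$, one has $\ker(\theta_M)^{\perp_{q_M}} = \operatorname{Im}(\theta_M)$. Since $\theta_M$ has rank two, $\operatorname{Im}(\theta_M)$ is $2$-dimensional and $\ker(\theta_M)$ has dimension $m-2$, so the two subspaces have complementary dimensions; to get the direct sum decomposition $M = \ker(\theta_M) + \operatorname{Im}(\theta_M)$ it suffices to show $\ker(\theta_M)\cap \operatorname{Im}(\theta_M) = 0$, and then the sum will automatically be orthogonal because $\operatorname{Im}(\theta_M) = \ker(\theta_M)^{\perp_{q_M}}$.

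First I would introduce $N \coloneqq \operatorname{Im}(\theta_M)$, a $2$-dimensional $\theta_M$-invariant subspace (invariant since $\theta_M(M)\subseteq N$). Observe that $\theta_M|_N$ is again skew-symmetric with respect to $q_M|_N$, and it is nonzero: indeed, if $\theta_M|_N = 0$ then $\theta_M^2 = 0$, contradicting the hypothesis that $\theta_M$ is not nilpotent. Next I would argue that $q_M|_N$ is non-degenerate. The radical of $q_M|_N$ is $N\cap N^{\perp_{q_M}} = N \cap \ker(\theta_M) = \operatorname{Im}(\theta_M)\cap\ker(\theta_M)$, which is the intersection we want to control. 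If this intersection were $1$-dimensional (it cannot be $2$-dimensional, as that would force $N\subseteq\ker\theta_M$ hence $\theta_M|_N=0$), then in a suitable basis of the $2$-dimensional space $N$ the form $q_M|_N$ has rank one and $\theta_M|_N$ is a nonzero skew-symmetric $2\times2$ matrix with respect to it; a direct check shows any skew-symmetric endomorphism with respect to a rank-one symmetric form on a $2$-dimensional space is nilpotent, so $\theta_M|_N$ is nilpotent, hence so is $\theta_M$ (as $\theta_M^2(M)\subseteq \theta_M(N)$ and $\theta_M|_N$ nilpotent forces $\theta_M^3 = 0$) — again a contradiction.

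Therefore $\operatorname{Im}(\theta_M)\cap\ker(\theta_M) = 0$, and combining with $\dim\ker(\theta_M) + \dim\operatorname{Im}(\theta_M) = (m-2) + 2 = m = \dim M$ gives $M = \ker(\theta_M)\oplus\operatorname{Im}(\theta_M)$. Since $\operatorname{Im}(\theta_M) = \ker(\theta_M)^{\perp_{q_M}}$, the decomposition is orthogonal, as claimed. The main subtlety — really the only point requiring care — is ruling out the degenerate case where $q_M$ restricted to the image is degenerate; everything else is dimension counting and the already-recalled identity $\ker(\theta_M)^{\perp_{q_M}} = \operatorname{Im}(\theta_M)$. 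An alternative, perhaps cleaner, route to this step: the ``not nilpotent, rank two'' hypothesis means $\theta_M$ has a single nonzero eigenvalue pair $\pm\zeta$ with $\zeta\neq 0$ (the spectrum of a skew-symmetric map is symmetric about $0$), so $N = \operatorname{Im}(\theta_M)$ is spanned by eigenvectors for $\pm\zeta$ and $\theta_M|_N$ is diagonalizable with distinct eigenvalues, forcing $q_M|_N$ non-degenerate by the standard orthogonality of eigenvectors of a skew-symmetric operator for eigenvalues whose sum is nonzero.
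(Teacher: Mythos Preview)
Your ``alternative, cleaner route'' is correct and is essentially the paper's proof: the paper also passes to coordinates where $\theta_M$ is a skew-symmetric matrix, observes that non-nilpotency plus rank two forces a single pair of nonzero eigenvalues $\pm\zeta$ so that $\theta_M$ is diagonalizable with $\operatorname{Im}(\theta_M)=\langle v^+,v^-\rangle$, and then concludes $\ker(\theta_M)\cap\operatorname{Im}(\theta_M)=0$ (the paper phrases this last step as ``otherwise $\theta_M^2$ would have rank $\leq 1$, contradicting $\operatorname{Im}(\theta_M^2)=\operatorname{Im}(\theta_M)$'').

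Your \emph{main} route, however, contains a false assertion. You claim that ``any skew-symmetric endomorphism with respect to a rank-one symmetric form on a $2$-dimensional space is nilpotent''. This is not true: with $q=\left(\begin{smallmatrix}1&0\\0&0\end{smallmatrix}\right)$ the condition $\theta^t q+q\theta=0$ only forces $\theta=\left(\begin{smallmatrix}0&0\\c&d\end{smallmatrix}\right)$, which has eigenvalue $d$ and is non-nilpotent whenever $d\neq 0$. What saves your argument in the actual situation is a fact you set up but do not invoke: the radical of $q_M|_N$ is $N\cap\ker(\theta_M)$, which is also $\ker(\theta_M|_N)$. Feeding this extra constraint into the computation (the radical vector $e_2$ must satisfy $\theta_M|_N e_2=0$, forcing $d=0$) does yield nilpotency of $\theta_M|_N$. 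So the idea is sound, but the step as written appeals to a general statement that fails; you should instead use the specific identification $\mathrm{rad}(q_M|_N)=\ker(\theta_M|_N)$.
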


\begin{proof}
	Choosing the trivialisation $M\cong \CC^m$ such that $\theta_M$ is given by a skew-symmetric matrix. The non-zero eigenvalues of $\theta_M$ occur in  pairs $\pm \zeta_j$. In particular, since $\theta_M$ has rank two, $\theta_M$ is non-nilpotent if and only if $\theta_M$ admits a unique pair of non-zero eigenvalues $\pm \zeta\in \CC$, which then implies that $\theta_M$ is diagonalisable. Thus there exist non-zero eigenvectors $v^{\pm}\in \CC^{m}$ such that $\theta_M(v^+)=\zeta v^+$, $\theta_M(v^-)=-\zeta v^-$ and $\operatorname{Im}(\theta_M)$ is generated by $v^{\pm}$. 
	
	Finally, if $\ker(\theta_M)\cap \operatorname{Im}(\theta_M)\not=\{0\}$, then $\theta_M^2$ has rank at most one, which is impossible as $\operatorname{Im}(\theta^2_M)=\operatorname{Im}(\theta_M)$. Hence $\ker(\theta_M)\cap \operatorname{Im}(\theta_M)=\{0\}$ and a dimension counting yields the desired decomposition.
\end{proof}

\subsection{A non-degenerate bilinear map}

\label{ss.Perfect-pairing}

Let $x=[V]\in X$ be an arbitrary point. Let 
\[
\widetilde{T} = (T_{X,[V]}\otimes V) \otimes \sO_{\PP^1} \subset \tilde N
\]
be the trivial factor  and let $T \subset N$ be its pull-back (see Lemma \ref{lem_trivial_factor} and Notation \ref{notationNT}).  For any $t\in \PP^1$, one gets canonical isomorphisms
\[
H^0(C,T) = H^0(C,N) \cong H^0(\PP^1,\widetilde{N}) = H^0(\PP^1,\widetilde{T}) \stackrel{\operatorname{ev}_t}{\cong} T_{X,x}\otimes V.
\]

Let $(F,q)$ be the orthogonal vector bundle associated to $[V]$ 
in Proposition \ref{prop.defineF} and $E\coloneqq F\otimes \alpha$. 
We have seen in Subsection \ref{ss.cM-to-X} that $\dim H^0(C,E)^-=1$.
We come to a key technical point of this paper:

\begin{prop}
	\label{prop.perfect-pairing}
	There exists a well-defined non-degenerate bilinear map
    \begin{equation}
	\label{eq.pefect-pairing}
	\begin{tikzcd}[row sep=tiny]
		\Psi_x\colon 
		H^0(C,N) \times H^0(C,\wedge^2 F\otimes K_C)^+ \arrow[r]
		& H^0(C,E)^- = V \cong \CC \\
		(\sigma,\theta) \arrow[r,mapsto]
		& \theta\circ \sigma.
	\end{tikzcd}
\end{equation}
In particular, we have
	\[
	\dim H^0(C,\wedge^2 F\otimes K_C)^+ = 2g-1.
	\]
\end{prop}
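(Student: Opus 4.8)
The plan is to prove the three assertions — well-definedness of $\Psi_x$, its non-degeneracy, and the equality $\dim H^0(C,\wedge^2F\otimes K_C)^+=2g-1$ — in that order, the last two being extracted together from a single injectivity statement; bilinearity being evident, the content lies entirely in the words ``well-defined'' and ``non-degenerate''. \emph{Well-definedness.} I would first check that $\theta\circ\sigma$ really lands in the one-dimensional space $H^0(C,E)^-=V$, and not merely in $H^0(C,E)$ (which has dimension $2g^2$). Since $C$ is hyperelliptic of genus $g$ one has $K_C\cong h^{g-1}$ as line bundles; fix such an isomorphism. Recalling $\alpha=h^{g-1}\otimes\sO_C(p_{2g+1})$ and the inclusion $\iota\colon N\hookrightarrow E\otimes h^{-(g-1)}=F\otimes\sO_C(p_{2g+1})$ from \eqref{defineiota}, a section $\sigma\in H^0(C,N)$ gives a holomorphic section $\iota(\sigma)$ of $E\otimes h^{-(g-1)}$, and applying $\theta\colon F\to F\otimes K_C$ produces a holomorphic section $\theta(\iota(\sigma))$ of $E\otimes h^{-(g-1)}\otimes K_C\cong E$; so holomorphy is automatic, and the only content is $i$-anti-invariance. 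All sections of $N=\pi^*\widetilde N$ are $i$-invariant, the map $\iota$ and the extension \eqref{eq_ext_E} defining $E$ are $i$-equivariant, and $\theta$ is $i$-invariant, so $\theta(\iota(\sigma))$ is an $i$-invariant section of $E\otimes h^{-(g-1)}\otimes K_C$. But $h^{-(g-1)}\otimes K_C$, although trivial as a line bundle, carries the natural $i$-linearization of type $((-1)^{2g+2})$ — since $K_C$ has type $((-1)^{2g+2})$ while the pull-back $h^{g-1}$ has type $((+1)^{2g+2})$ — i.e.\ it is $\sO_C$ with its non-trivial $i$-linearization, and twisting by the latter interchanges the $+1$- and $-1$-eigenspaces of sections; hence $\theta\circ\sigma\in H^0(C,E)^-=V\cong\CC$. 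The pairing depends on the chosen isomorphism $K_C\cong h^{g-1}$ only by an overall scalar, which is irrelevant for the rest.

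\emph{Non-degeneracy and dimension count.} Since $N=T\oplus h^{-1}$ with $T\cong\sO_C^{\oplus(2g-1)}$ and $H^0(C,h^{-1})=0$, we have $\dim H^0(C,N)=\dim H^0(C,T)=2g-1$; so it is enough to show that $\theta\mapsto\Psi_x(-,\theta)$ is injective as a map $H^0(C,\wedge^2F\otimes K_C)^+\to H^0(C,N)^{\ast}$. Indeed, together with the inequality $\dim H^0(C,\wedge^2F\otimes K_C)^+\geq 2g-1$ of Lemma~\ref{lem.dim-h0-tangent}, this forces equality of dimensions, so the injection is an isomorphism and $\Psi_x$ is a perfect pairing, hence non-degenerate in both variables. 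To prove the injectivity I would argue by contradiction: if $\theta\neq 0$ but $\theta\circ\sigma=0$ for all $\sigma\in H^0(C,N)$, then, $T$ being globally generated, $\{\sigma(y):\sigma\in H^0(C,N)\}=T_y$ for every $y\in C$; choosing $y\in C\setminus W$ general makes $\iota_y\colon N_y\to(E\otimes h^{-(g-1)})_y\cong F_y$ an isomorphism, so $\iota_y(T_y)\subset F_y$ is a hyperplane. Evaluating the identically vanishing sections $\theta\circ\sigma$ at $y$, and viewing $\theta_y$ via $q$ as a skew-symmetric endomorphism of $F_y$ (up to the one-dimensional twist by $(K_C)_y$), we find that $\theta_y$ kills this hyperplane, so $\operatorname{rank}(\theta_y)\leq1$; but a skew-symmetric endomorphism for a non-degenerate symmetric form has even rank, so $\theta_y=0$. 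Since $C\setminus W$ is dense, $\theta=0$ — a contradiction.

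\emph{Main obstacle.} The delicate step is the well-definedness: one has to track the $i$-linearizations of $N$, $E$, $K_C$ and $h^{g-1}$ and the identification $K_C\cong h^{g-1}$ accurately enough to be sure that $\theta\circ\sigma$ falls into the one-dimensional eigenspace $H^0(C,E)^-$ rather than into all of $H^0(C,E)$. The non-degeneracy itself is a short pointwise argument using only the skew-symmetry of $\theta$ and the corank-one inclusion $T\subset N$; and because it is carried out at a general point $y$ of the curve $C$, it applies uniformly to every $x\in X$ — including points on the coordinate hyperplanes, where the finer information about $\iota$ at the Weierstra\ss\ points (Lemma~\ref{lem.inj-iota}) is not available.
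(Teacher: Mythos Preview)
Your proof is correct and follows essentially the same route as the paper's. Both the well-definedness argument (tracking the $i$-linearizations through $\iota$ and observing that the isomorphism $K_C\cong h^{g-1}$ swaps the sign of the $i$-action) and the non-degeneracy argument (combining the lower bound of Lemma~\ref{lem.dim-h0-tangent} with injectivity in the $\theta$-variable, proved pointwise at $y\in C\setminus W$ via the even-rank property of skew-symmetric maps) match the paper's Steps~1 and~2 almost line for line.
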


\begin{remarks}
	We remark that an element $\theta\in H^0(C,\wedge^2 F\otimes K_C)$ can be viewed pointwise as a $K_C$-valued skew-symmetric map with respect to $q$.
\end{remarks}

\begin{proof}[Proof of Proposition \ref{prop.perfect-pairing}]

{\em Step 1. We show that $\Psi_x$ is well-defined.}
Let us first explain the meaning of $\theta \circ \sigma$:
We consider the skew-symmetric form $\theta$ as a morphism
$F^* \rightarrow F \otimes K_C$. The orthogonal structure of $F$ gives an isomorphism $E=F\otimes \alpha \cong F^*\otimes \alpha$, so $\theta\circ\sigma$ is defined as the composition:
	\begin{equation}
		\label{eq.def-theta-sigma}
		\sO_C\xrightarrow{\sigma} N \xrightarrow{\iota} E\otimes h^{-(g-1)} \cong F^*\otimes \alpha \otimes h^{-(g-1)} \xrightarrow{\theta} F\otimes K_C\otimes \alpha\otimes h^{-(g-1)} \simeq E
	\end{equation}
    where $\iota$ is the inclusion given \eqref{defineiota}.

    We are left to show that $\theta\circ \sigma$ is an element in $H^0(C,E)^-$. 
    	As $H^0(C,N)\cong H^0(\PP^1,\widetilde{N})$, the section $\sigma$ is $i$-invariant, which implies that $\theta\circ \sigma$ is an element contained in
	\[
	H^0(C,F\otimes K_C\otimes \alpha\otimes h^{-(g-1)})^+ = H^0(C,E\otimes K_C\otimes h^{-(g-1)})^+.
	\]
	On the other hand,  there exists an isomorphism $K_C\cong h^{-(g-1)}$, which changes the sign of the $i$-action of each side, hence $\theta\circ \sigma\in H^0(C,E)^-$.

{\em Step 2. We show that $\Psi_x$ is non-degenerate.}
 	By Lemma \ref{lem.dim-h0-tangent}, we have
	\[
	\dim H^0(C,\wedge^2 F \otimes K_C)^+ \geq 2g-1.
	\] 
	In particular, as $\dim H^0(C,N)=2g-1$ and $\dim H^0(C,E)^-=1$, it is enough to prove that if $\theta\circ \sigma = 0$ for any $\sigma\in H^0(C,N)$, then $\theta=0$. Pick such $\theta\in H^0(C,\wedge^2 F\otimes K_C)^+$. Let $y\in C\setminus W$ be a point. Note that $T_y$ is the image of the evaluation map
	\[
	\operatorname{ev}_y\colon \CC^{2g-1}\cong H^0(C,N) \longrightarrow N_y \cong \CC^{2g}
	\]
	and it is a codimension one subspace of $N_y$. Moreover, as $y\not\in W$, the linear map 
    \begin{equation}
        \label{iotay}
        	\iota_y\colon N_y \longrightarrow (E\otimes h^{-(g-1)})_y
    \end{equation}
	is an isomorphism (cf. the exact sequence \eqref{eq_ext_E}).
	
	On the other hand, as $\theta\circ \sigma=0$ for any $\sigma\in H^0(C,N)$, it follows from \eqref{eq.def-theta-sigma} that the codimension one subspace $\iota_y(T_y)$ of $(E\otimes h^{-(g-1)})_y$ is contained in the kernel of the linear map
	\[
	\theta(y)\colon (E\otimes h^{-(g-1)})_y\longrightarrow E_y.
	\]
	This implies that the rank of $\theta(y)$ is at most one. Nevertheless, since $\theta$ is skew-symmetric, the rank of $\theta(y)$ must be even; so $\theta(y)=0$ for general points $y\in C$ and hence $\theta=0$.
\end{proof}

\subsection{Hitchin morphism of $\cM$}

We can now combine the various results to obtain a suitable description of the cotangent space of $\cM$:

\begin{proposition}
	\label{prop.Cotangent-cM}
	For any point $[\cF]\in \cM$, there exists an $i$-invariant semistable orthogonal vector bundle $(F,q)$ such that $\nu([\cF])=[(F,q)] \in \cU$ and 
	\[
	T^*_{\cM,[\cF]}\cong H^0(C,\wedge^2 F\otimes K_C)^+.
	\]
\end{proposition}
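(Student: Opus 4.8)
The plan is to reduce everything to the forgetful map $\nu\colon\cM\to\cU$ and the identification of the cotangent space of a moduli space of principal bundles via Serre duality. First I would fix a point $[\cF]\in\cM$ and set $(F,q)\coloneqq\nu([\cF])\in\cU$; by Proposition \ref{prop.defineF} and Theorem \ref{thm.RamananThm} every such $[\cF]$ arises this way, and $(F,q)$ is an $i$-invariant semistable orthogonal bundle of type $\tau=(1^{2g+1},2g-1)$. The key input is that $\cM$ is identified with $X$, hence is smooth of dimension $n=2g-1$, so $T^*_{\cM,[\cF]}$ is a $(2g-1)$-dimensional vector space; by Proposition \ref{prop.perfect-pairing} the space $H^0(C,\wedge^2F\otimes K_C)^+$ also has dimension exactly $2g-1$, so once a natural injection (or surjection) between the two is produced, it is automatically an isomorphism by dimension count.

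The heart of the argument is to produce that natural map. Since $\cM=\cM_{\Spin_{2g}}^{-,\tau}$ is (an open subset, or a component, of) a moduli space of Spin-bundles, and the Lie algebra $\mathfrak{so}_{2g}\cong\wedge^2(\text{standard})$ is self-dual, the general identification \eqref{tangentMG} gives $T_{\cM,[\cF]}\cong H^1(C,\cF_{\mathfrak{so}_{2g}})$, and by Serre duality $T^*_{\cM,[\cF]}\cong H^0(C,\cF_{\mathfrak{so}_{2g}}\otimes K_C)$. Now I would identify the adjoint bundle: for the orthogonal bundle $(F,q)$ one has $\cF_{\mathfrak{so}_{2g}}\cong\wedge^2_q F$, the bundle of $q$-skew-symmetric endomorphisms, which under the isomorphism $q\colon F\xrightarrow{\sim}F^*$ is identified with $\wedge^2 F$. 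Hence $T^*_{\cM,[\cF]}\cong H^0(C,\wedge^2F\otimes K_C)$ at the level of the orthogonal moduli space $\cU$ (or rather the Spin moduli space $\cV$ lying over it). The passage from $\cU$ to $\cM$ via the finite étale-type cover $\nu$ induces isomorphisms on tangent and cotangent spaces, so no correction term appears there. Finally, the $+$-superscript: $\cM$ sits inside $\cM_{\Spin_{2g}}^-$ as the locus of $i$-invariant bundles, i.e.\ $\cM$ is (a component of) the fixed locus of the involution $[\cF]\mapsto[(\alpha^*\otimes\alpha)\cdot i^*\cF]$ coming from \eqref{eq.Gamma-action-cM} with $\xi=\alpha$; the tangent space to a fixed locus of an involution on a smooth variety is the $+1$-eigenspace of the induced involution on the ambient tangent space, and dually the cotangent space is the $+1$-eigenspace $H^0(C,\wedge^2F\otimes K_C)^+$. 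Putting these identifications together yields the claimed isomorphism.

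I expect the main obstacle to be making the last two points precise and compatible: namely, checking that the involution on $H^0(C,\wedge^2F\otimes K_C)$ induced by the geometric involution on $\cM$ really is the one coming from $i^*$ (so that its $+1$-eigenspace is what is denoted by the superscript $+$), and checking that $[\cF]$ is a regularly stable (or at least smooth) point of the relevant moduli space so that \eqref{tangentMG} applies — here one uses that $\cM\cong X$ is everywhere smooth, together with Lemma \ref{lem.stability-F} and the stability discussion after Definition \ref{defn.semi-stability-orthogonal-bundles} to control the stable locus. As a safety net, even at points where regular stability might fail, the dimension count from Proposition \ref{prop.perfect-pairing} together with the smoothness of $X$ forces the natural evaluation/contraction pairing to be perfect, so the isomorphism $T^*_{\cM,[\cF]}\cong H^0(C,\wedge^2F\otimes K_C)^+$ can alternatively be read off directly from the non-degenerate pairing $\Psi_x$ of Proposition \ref{prop.perfect-pairing}, identifying $T_{\cM,[\cF]}=T_{X,x}\otimes V\cong H^0(C,N)$ via Lemma \ref{lem_trivial_factor} and Remark \ref{rem.Trivial-factor}.
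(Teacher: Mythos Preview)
Your ``safety net'' is exactly the paper's proof: the authors use the isomorphism $\cM\cong X$ to transport the question to $X$, identify $T_{X,[V]}\cong H^0(C,N)\otimes V^*$ via Lemma~\ref{lem_trivial_factor}, and then read off the isomorphism $T^*_{\cM,[\cF]}\cong H^0(C,\wedge^2F\otimes K_C)^+$ directly from the non-degenerate pairing $\Psi_x$ of Proposition~\ref{prop.perfect-pairing}. No modular-theoretic deformation argument is invoked at all; the perfect pairing does all the work.

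Your primary route, however, has a misplacement of the fixed-locus argument. The involution you write down, $[\cF]\mapsto[(\alpha^*\otimes\alpha)\cdot i^*\cF]=[i^*\cF]$, is the action of $\alpha\in\Upsilon$ from \eqref{eq.Gamma-action-cM}; its fixed locus on $\cM$ is $\cM^i\cong Y$, not $\cM$ itself. The correct modular picture (cf.\ Remark~\ref{remark-natural}) is that $\cU$ is a component of the fixed locus of $i$ acting on $\cM_{\operatorname{O}_{2g}}^{-}$, so that $T^*_{\cU,[(F,q)]}$ is the $+1$-eigenspace $H^0(C,\wedge^2F\otimes K_C)^+$, and then one pulls back along the finite cover $\nu\colon\cM\to\cU$ on the \'etale locus. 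This works generically but, as you anticipated, hits the regular-stability issue at special points (Lemma~\ref{lem.stability-F} shows $F$ is not stable on the coordinate hyperplanes), which is precisely why the paper bypasses all of this and uses the pairing $\Psi_x$---valid at every point of $X$---instead.
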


\begin{proof}
 The isomorphism \eqref{morphismMtoX} associates to $[\cF]$ the 
 point $[V]\coloneqq [H^0(C, E)^-] \in X$. 
 Let $[(F,q)] \in \cU$ be the orthogonal bundle associated to $[V]$ in Proposition \ref{prop.defineF}. 
 By Lemma \ref{lem_trivial_factor} the tangent space $T_{X,[V]}$ is canonically isomorphic to $H^0(C,N)\otimes V^*$. The result now follows from 
 $T_{\cM,[\cF]} \simeq T_{X,[V]}$ and  Proposition \ref{prop.perfect-pairing}.
\end{proof}

As a consequence we can introduce a Hitchin morphism $h_{\cM}$ for $\cM$ as follows:

\begin{equation}
	\label{eq.Hitchin-cM}
	h_{\cM}\colon T^*\cM \longrightarrow \cA^+ \coloneqq \bigoplus_{j=1}^{g-1} H^0(C,K_C^{2j})^+ \oplus H^0(C,K_C^g)
\end{equation}
such that for any $[\cF]$ and $\theta\in H^0(C,\wedge^2 F\otimes K_C)^+$, within the isomorphism in Proposition \ref{prop.Cotangent-cM}, we have
\[
h_{\cM}(\theta) = (\operatorname{tr}(\wedge^2\theta),\dots,\operatorname{tr}(\wedge^{2g-2}\theta),\operatorname{Pf}(\theta)),
\]
where $\operatorname{Pf}(\theta)$ is the Pfaffian of $\theta$.

\begin{remark} \label{remark-natural}
Recall that the forgetful map \eqref{define-eta}
$$
\nu^{-}\colon \cM_{\Spin_{2g}}^{-}\longrightarrow \cM_{\SO_{2g}}^{-} \longrightarrow \cM_{\operatorname{O}_{2g}}^{-}
$$
gives a $(\ZZ_2)^{2g+1}$-Galois cover of its image. 
The morphism $\nu: \cM \rightarrow \cU$ is by definition the restriction of $\nu^-$ over the subvariety $\cU \subset \cM_{\operatorname{O}_{2g}}^{-}$,
by Theorem \ref{theorem:ramanan5} it has degree $2^{2g+1}$.
Thus $\cU$ meets the locus where $\nu^-$ is \'etale, and we denote by
$$
\nu_{\acute{e}t}: \cM_{\acute{e}t} \rightarrow \cU_{\acute{e}t}
$$
the restriction.
 Since $\cU$ is (a component of) the fixed locus of $i:\cM_{\operatorname{O}_{2g}} \to \cM_{\operatorname{O}_{2g}}$, we have a splitting 
$$
T^*_{\cM_{\operatorname{O}_{2g}}} \otimes \sO_{\cU}=T^*_{\cU}\oplus (T^*_{\cM_{\operatorname{O}_{2g}}} \otimes \sO_{\cU})^-
$$
into the $i$-invariant factor (which can be identified with $T^*_{\cU}$) and the $i$-anti-invariant factor.
In particular $T^*_{\cU_{\acute{e}t}}$ (and therefore $T^*_{\cM_{\acute{e}t}}$) has a natural structure of submanifold of $T^*_{\cM_{\operatorname{O}_{2g}}}$ (resp. $T^*_{\cM_{\Spin_{2g}}}$).
By Lemma \ref{lemma.invariants} below the formula \eqref{eq.Hitchin-cM} is the extension
of the general definition \eqref{defn:hitchin-general} of the Hitchin morphism for $\cM_{\Spin_{2g}}^{-}$ to this subvariety.
\end{remark}

\begin{lemma} \label{lemma.invariants}
Let $g \geq 2$ be an integer.
	We have the following isomorphisms of algebras of invariants
	\[
	\CC[\mathfrak{spin}_{2g}]^{\Spin_{2g}} \cong \CC[\mathfrak{so}_{2g}]^{\SO_{2g}}.
	\]
    In particular 
    $$
\operatorname{tr}(\wedge^2\bullet),\dots,\operatorname{tr}(\wedge^{2g-2}\bullet),\operatorname{Pf}(\bullet)
$$
is a basis of invariants of $\mathfrak{spin}_{2g}$.
\end{lemma}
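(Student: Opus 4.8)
The plan is to reduce the statement to a purely group-theoretic fact about the isogeny $\Spin_{2g}\to\SO_{2g}$ and then invoke the classical description of invariants of $\mathfrak{so}_{2g}$. First I would observe that the isogeny $\Spin_{2g}\to\SO_{2g}$ from \eqref{exact_seq_spin_SO} induces an isomorphism of Lie algebras $\mathfrak{spin}_{2g}\xrightarrow{\cong}\mathfrak{so}_{2g}$ which is equivariant for the two adjoint actions, in the sense that the map intertwines the $\Spin_{2g}$-action on $\mathfrak{spin}_{2g}$ with the $\SO_{2g}$-action on $\mathfrak{so}_{2g}$ pulled back along $\Spin_{2g}\to\SO_{2g}$. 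Hence pulling back polynomial functions gives an isomorphism of graded algebras $\CC[\mathfrak{so}_{2g}]\cong\CC[\mathfrak{spin}_{2g}]$ carrying $\SO_{2g}$-invariants into $\Spin_{2g}$-invariants; this shows $\CC[\mathfrak{so}_{2g}]^{\SO_{2g}}\hookrightarrow\CC[\mathfrak{spin}_{2g}]^{\Spin_{2g}}$.

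For the reverse inclusion I would argue that a $\Spin_{2g}$-invariant polynomial on $\mathfrak{spin}_{2g}\cong\mathfrak{so}_{2g}$ is automatically invariant under the full $\SO_{2g}$: since $\Spin_{2g}\to\SO_{2g}$ is surjective with kernel the finite central subgroup $\Z_2$, and $\Z_2$ acts trivially on the Lie algebra (it is central), the image of $\Spin_{2g}$ in $\Aut(\mathfrak{spin}_{2g})$ equals the image of $\SO_{2g}$ in $\Aut(\mathfrak{so}_{2g})$ under the identification. Therefore the two invariant rings literally coincide as subalgebras of $\CC[\mathfrak{so}_{2g}]\cong\CC[\mathfrak{spin}_{2g}]$, giving the asserted isomorphism $\CC[\mathfrak{spin}_{2g}]^{\Spin_{2g}}\cong\CC[\mathfrak{so}_{2g}]^{\SO_{2g}}$.

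For the ``in particular'' clause I would recall the classical fact (Example \ref{ex.Charac-poly-SO}, i.e. \cite[\S5]{Hit87}) that $\CC[\mathfrak{so}_{2g}]^{\SO_{2g}}$ is a polynomial algebra on the generators $a_1,\dots,a_{g-1},\operatorname{Pf}$, where $a_j$ is (up to sign) the coefficient of $x^{2g-2j}$ in the characteristic polynomial of a matrix in $\mathfrak{so}_{2g}$, so that $a_j=\operatorname{tr}(\wedge^{2j}\bullet)$ (up to a nonzero scalar), and $\operatorname{Pf}$ is the Pfaffian of degree $g$ with $\operatorname{Pf}^2=a_g$. Transporting this basis through the isomorphism of the first part yields that $\operatorname{tr}(\wedge^2\bullet),\dots,\operatorname{tr}(\wedge^{2g-2}\bullet),\operatorname{Pf}(\bullet)$ is a basis of invariants of $\mathfrak{spin}_{2g}$, which is exactly the displayed assertion. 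I should be slightly careful to note $g\geq 2$ is used so that the list $a_1,\dots,a_{g-1}$ together with $\operatorname{Pf}$ is the correct generating set for $\mathrm{D}_g$ (for $g=2$, $\mathfrak{so}_4$ is not simple and the invariant ring is generated by $\operatorname{tr}(\wedge^2\bullet)$ and $\operatorname{Pf}$, consistently with Example \ref{ex.Spin4}).

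The only genuine content here is the Lie-theoretic identification of the two adjoint actions; everything else is classical invariant theory, so I do not expect a real obstacle, though I would take care to state precisely why central isogenies induce equal invariant rings on the Lie algebra (the kernel acts trivially by centrality, and surjectivity of $\Spin_{2g}\to\SO_{2g}$ forces the two images in $\GL(\mathfrak{so}_{2g})$ to agree).
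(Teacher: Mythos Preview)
Your proposal is correct and follows essentially the same approach as the paper: use the isogeny $\Spin_{2g}\to\SO_{2g}$ to identify the Lie algebras and hence the invariant rings, then invoke Example \ref{ex.Charac-poly-SO} for the explicit basis. The paper's proof is simply terser, stating that the universal cover induces the isomorphism of invariants without spelling out the centrality/surjectivity argument you give.
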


\begin{proof}
	Since $\Spin_{2g} \rightarrow \SO_{2g}$ is the universal cover, we have an isomorphism
	$
	\mathfrak{spin}_{2g} \cong \mathfrak{so}_{2g} 
	$
	inducing the isomorphism of invariants. 
    For the second statement just recall from Example \ref{ex.Charac-poly-SO} that 
$$
\operatorname{tr}(\wedge^2\bullet),\dots,\operatorname{tr}(\wedge^{2g-2}\bullet),\operatorname{Pf}(\bullet)
$$
is a basis of invariants of $\fso_{2g}$.
\end{proof}

In the following, we aim to describe the image of $h_{\cM}$. For an element $0 \neq \theta\in H^0(C,\wedge^2 F\otimes K_C)^+$, Proposition \ref{prop.perfect-pairing} gives a surjective linear map
\begin{equation}
	\label{eq.L-theta}
	L_{\theta}\colon \CC^{2g-1}\cong H^0(C,N) \longrightarrow H^0(E)^-\cong \CC, \quad \sigma\longmapsto \theta\circ \sigma.
\end{equation}

\begin{lemma}
	\label{lem.rank-Higgs}
	For any element $\theta\in H^0(C,\wedge^2 F\otimes K_C)^+$ and any point $y\in C$, the rank of $\theta(y)$ is at most two.
\end{lemma}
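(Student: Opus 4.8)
The claim is that for every $\theta \in H^0(C, \wedge^2 F \otimes K_C)^+$ and every point $y \in C$, the rank of the skew-symmetric map $\theta(y) \colon F^*_y \to F_y \otimes (K_C)_y$ is at most two. Since rank is a lower semicontinuous function, it suffices to bound the generic rank: if $\theta(y)$ had rank $\geq 4$ on a dense open set, one would still need to handle the special points, but in fact I would argue that the generic rank already forces rank $\leq 2$ everywhere, because the locus where the rank drops is closed and the rank is always even (skew-symmetry), so rank $\leq 2$ on a dense open set implies rank $\leq 2$ at every point. Hence the heart of the matter is to rule out $\theta(y)$ having rank $\geq 4$ for general $y \in C$.

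**Key steps.** First I would argue by contradiction: suppose $\theta(y)$ has rank $\geq 4$ for general $y$. Using the non-degenerate pairing $\Psi_x$ of Proposition \ref{prop.perfect-pairing}, we know that for every $\sigma \in H^0(C,N)$ the composition $\theta \circ \sigma$ lies in $H^0(C,E)^- \cong V \cong \CC$, a one-dimensional space. Evaluating at a general point $y \notin W$, where $\iota_y \colon N_y \to (E \otimes h^{-(g-1)})_y$ is an isomorphism, this says that the image $\theta(y)\bigl(\iota_y(T_y)\bigr) \subset E_y$ — where $T_y \subset N_y$ is the codimension-one subspace that is the image of the evaluation map $\operatorname{ev}_y\colon H^0(C,N) \to N_y$ — maps into the fixed line $V \subset E_y$. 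In other words, $\theta(y)$ restricted to the hyperplane $\iota_y(T_y)$ has image contained in a single line. Since $\operatorname{rk}\theta(y) \geq 4$, its image is at least $4$-dimensional; restricting to a hyperplane can cut the rank of the image by at most one, so $\theta(y)$ restricted to $\iota_y(T_y)$ still has image of dimension $\geq 3$, contradicting that this image lies in the one-dimensional space $V$. This is exactly the rank-counting argument already used in Step 2 of the proof of Proposition \ref{prop.perfect-pairing}, and it forces $\operatorname{rk}\theta(y) \leq 2$ for all $y \notin W$, hence $\leq 2$ everywhere by semicontinuity.

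**The main obstacle.** The only genuinely delicate point is making precise the identification under which $\theta \circ \sigma$ is computed: one must be careful that $\theta(y) \colon (E \otimes h^{-(g-1)})_y \to E_y$ is the fibre of the composition in \eqref{eq.def-theta-sigma}, that $\iota_y(T_y)$ is genuinely a hyperplane (this uses $\dim H^0(C,N) = 2g-1$, $\dim N_y = 2g$, and that $\iota_y$ is an isomorphism away from $W$), and that the value $\theta\circ\sigma \in H^0(C,E)^-$ evaluated at $y$ really lands in the line $V \subset E_y$ coming from $H^0(C,E)^- = V$ under the evaluation. All of these are established in Subsections \ref{ss.cM-to-X}–\ref{ss.Perfect-pairing}, so the proof reduces to invoking them and performing the elementary rank count; there is no serious additional difficulty.
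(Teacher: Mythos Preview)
Your proposal is correct and follows essentially the same approach as the paper: both use the pairing $\Psi_x$ to constrain $\theta(y)$ via the evaluation of sections of $N$ at a general point $y\notin W$. The only difference is presentational --- the paper directly takes the $(2g-2)$-dimensional subspace $H=\ker L_\theta\subset H^0(C,N)$ and observes that $\iota_y(\operatorname{ev}_y(H))$ sits inside $\ker\theta(y)$, giving $\operatorname{rk}\theta(y)\leq 2$ at once, whereas you phrase the same computation dually (the image of $\theta(y)$ on the hyperplane $\iota_y(T_y)$ lies in the evaluation of the line $H^0(C,E)^-$) and then invoke the parity of the rank; both conclude by semicontinuity.
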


\begin{proof}
	We may assume $\theta\not=0$, moreover by semi-continuity of the rank function we can assume $y \not\in W$. 
    Let $H \subset H^0(C,N)$ be the kernel of $L_{\theta}$,
    so $\dim H=2g-2$. 
    Since the evaluation map $\operatorname{ev}_y\colon H^0(C,N)\rightarrow N_y$ is injective,
    and $\iota_y$  is an isomorphism (cf. \eqref{defineiota}), we obtain that
    $$
    \iota_y(\operatorname{ev}_y(H)) \subset (E\otimes h^{-(g-1)})_y
    $$
    is a subspace of dimension $2g-2$. Moreover it is contained in the kernel of $\theta(y)$,
    see the construction of $\Psi_x$ in the proof of Proposition \ref{prop.perfect-pairing}.
Thus the rank of $\theta(y)$ is at most two.
\end{proof}

As an immediate consequence we can identify the base of our morphism:

\begin{theorem}
	\label{theorem.Image-h_U}
	In the situation of Setup \ref{setup-odd}, let $h_{\cM}$ be the Hitchin morphism defined by \eqref{eq.Hitchin-cM}. Then $\operatorname{Im}(h_{\cM})\subset H^0(C,K_C^2)^+$.
\end{theorem}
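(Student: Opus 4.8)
The plan is to exploit Lemma \ref{lem.rank-Higgs}, which says that the Higgs field $\theta(y)$ has rank at most two at every point $y \in C$. Recall that the components of $h_{\cM}(\theta)$ are $\operatorname{tr}(\wedge^{2j}\theta)$ for $1 \le j \le g-1$ together with $\operatorname{Pf}(\theta)$. These are built from the eigenvalues of $\theta$, which (since $\theta$ is skew-symmetric with respect to $q$) occur in pairs $\pm\zeta_1(y),\dots,\pm\zeta_g(y)$. The rank-two condition forces at most one of these pairs to be nonzero at any given point, say $\pm\zeta(y)$ with all other eigenvalues vanishing. The point is then that every invariant of $\fso_{2g}$ beyond the degree-two one must vanish identically.

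Concretely, first I would recall that $\operatorname{tr}(\wedge^{2j}\theta)(y)$ is, up to sign, the $j$-th elementary symmetric function in the squared eigenvalues $\zeta_1(y)^2,\dots,\zeta_g(y)^2$, while $\operatorname{Pf}(\theta)(y)$ is (up to sign) the product $\zeta_1(y)\cdots\zeta_g(y)$. By Lemma \ref{lem.rank-Higgs}, at each $y$ at most one $\zeta_k(y)$ is nonzero. Hence for $j \ge 2$ the elementary symmetric function $e_j(\zeta_1^2,\dots,\zeta_g^2)(y)$ — which is a sum of products of $j \ge 2$ distinct squared eigenvalues — vanishes at $y$; since $y$ was arbitrary, $\operatorname{tr}(\wedge^{2j}\theta) = 0$ as a section for all $j \ge 2$. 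Similarly, when $g \ge 2$ the Pfaffian $\zeta_1\cdots\zeta_g$ involves at least two factors, at least one of which vanishes at each point, so $\operatorname{Pf}(\theta) = 0$ identically. Therefore the only component of $h_{\cM}(\theta)$ that can be nonzero is the first one, $\operatorname{tr}(\wedge^2\theta) \in H^0(C,K_C^2)^+$, which proves $\operatorname{Im}(h_{\cM}) \subset H^0(C,K_C^2)^+$.

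I expect the main (minor) obstacle to be making the symmetric-function bookkeeping precise and pointwise-valid: the eigenvalues $\zeta_k(y)$ are not globally well-defined functions on $C$ (they are only defined up to permutation and sign, and may collide), so one must phrase the argument in terms of the symmetric polynomials $\operatorname{tr}(\wedge^{2j}\theta)$ and $\operatorname{Pf}(\theta)$ as honest global sections and check their vanishing at each fixed point $y$ by choosing a local trivialization in which $\theta(y)$ is a concrete skew-symmetric matrix of rank $\le 2$. Once one fixes such a $y$ and diagonalizes the skew-symmetric matrix $\theta(y)$ into $2\times2$ blocks, the rank-$\le 2$ condition leaves a single nonzero block, and the vanishing of all higher invariants at $y$ is immediate; since the sections vanish at every point they vanish identically. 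One should also note that $\operatorname{tr}(\wedge^2\theta)$ genuinely lands in the $i$-invariant part $H^0(C,K_C^2)^+$, which is automatic since $\theta \in H^0(C,\wedge^2 F \otimes K_C)^+$ and $\operatorname{tr}(\wedge^2 \bullet)$ is equivariant.
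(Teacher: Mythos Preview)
Your proposal is correct and follows essentially the same approach as the paper: both invoke Lemma \ref{lem.rank-Higgs} to get $\operatorname{rank}\theta(y)\le 2$ and deduce the vanishing of all higher invariants. The paper's write-up is slightly more direct: rather than passing through eigenvalues, it simply notes that $\operatorname{rank}\theta(y)\le 2$ forces $\wedge^i\theta(y)=0$ for $i\ge 3$ (hence $\operatorname{tr}(\wedge^i\theta)=0$ and, since $\operatorname{Pf}^2=\det$, also $\operatorname{Pf}(\theta)=0$), which sidesteps the bookkeeping about local eigenvalue well-definedness that you flag as a minor obstacle.
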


\begin{proof}
	 By Lemma \ref{lem.rank-Higgs}, the rank of $\theta$ at any point $y\in C$ is at most two. This implies $\operatorname{tr}(\wedge^i\theta)=0$ for any $i\geq 3$, from which the statement follows.
\end{proof}

\subsection{Proof of the main theorem \ref{mainthm.Odd-dimension}}

Let 
\[
\Phi_X\colon T^*X\longrightarrow H^0(X,S^2 T_X)^*\cong \CC^{2g-1}
\]
be the morphism given in \cite[Theorem 1.1]{BeauvilleEtesseHoeringLiuVoisin2024}. As $\Phi_X(\lambda v) = \lambda^2 \Phi_X(v)$ for any $v\in T^*X$ and $\lambda\in \CC$, the morphism $\varphi$ induces a rational map 
\[  
\varphi_X\colon \PP T_X \cong \PP (T_X\otimes \sO_X(-1)) \dashrightarrow \PP^{2g-2}.
\]
We need the following geometric interpretation of $\varphi$. Firstly we shall view $\PP^{2g-2}$ as the complete linear system 
\[
|\sO_{\PP^1}(2g-2)| = |2K_{\PP^1} + \Delta |,
\]
and $\PP^1$ parametrizes the pencil of quadrics generated by $q_1$ and $q_2$. Let $x=[V]\in X\subset \PP U^*$ be a general point and let 
\[
H\subset T_{X,x}\otimes V = \left(V^{\perp_{q_1}}\cap V^{\perp_{q_2}}\right)/V
\]
be a general codimension one subspace with $[H]\in \PP (T_X\otimes \sO_X(-1))_x$. Consider the pencil of restricted quadrics $\{q_t|_H \}_{t\in \PP^1}$. Since both $x$ and $H$ are general, there are exactly $(2g-2)$ different degenerate members in this pencil, namely $t_1,\dots,t_{2g-2}$. Let $s_H \in |\sO_{\PP^1}(2g-2)|$ be the unique element
vanishing along the $t_i$'s. Then 
\begin{equation}
    \label{formulaPhi}
   \varphi_X([H]) = s_H
\end{equation}
by \cite[Proposition 3.2]{BeauvilleEtesseHoeringLiuVoisin2024}.

Let now $(F,q)$ be the orthogonal bundle associated to $x=[V] \in X$, and 
$E=F \otimes \alpha$ as in Notation \ref{notation.E}.

For a point $y\in C$ with $t=\pi(y)\in \PP^1$, let $q_t$ be the corresponding quadratic form. Let 
\[
q_E\colon S^2(E\otimes h^{-(g-1)}) \longrightarrow \sO_C(2p_{2g+1})
\]
be the quadratic form induced by $q \colon S^2 F\rightarrow \sO_C$. 
Then its composition with $\iota$ (cf. \eqref{defineiota}) defines a quadratic form
\[
q_E\circ \iota\colon S^2 N \longrightarrow \sO_C(2p_{2g+1}).
\]
By Lemma \ref{lem_trivial_factor} we have a natural inclusion
$$
T_y \coloneqq  (T_{X,x} \otimes V) \otimes \sO_y \subset N_y. 
$$
Recall now from Proposition \ref{prop.defineF} that the quadratic form 
on $F$ is induced by the quadratic form $\tilde q$ and thus 
by $\holom{\bar q}{S^2 (U \otimes \sO_{\PP^1})}{\sO_{\PP^1}(1)}$.
Going through the construction one obtains: 

\begin{lemma}
    \label{lemmaidentifyq}
The restriction of $q_E\circ \iota$ to $T_y$ 
\[
(q_E\circ \iota)_y|_{T_y} \colon S^2 T_y \longrightarrow (\sO_{C}(2p_{2g+1}))_y\cong \CC.
\]
coincides with the restriction $q_t|_{T_{X,x}\otimes V}$.
\end{lemma}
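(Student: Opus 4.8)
The plan is to prove this by unwinding the construction of $(F,q)$ and of the maps $\iota$ and $q_E$; the identity is essentially forced once one tracks which quadratic form is being transported, the real work being the bookkeeping of the twists by $h$, $\sO_C(p_{2g+1})$ and $\alpha$. First I would identify the quadratic form $q_E\circ\iota$ on $S^2N$ explicitly. Since $E\otimes h^{-(g-1)}=F\otimes\sO_C(p_{2g+1})$, the map $\iota$ of \eqref{defineiota} is, after untwisting by $\sO_C(p_{2g+1})$, precisely the inclusion $N\otimes\sO_C(-p_{2g+1})\hookrightarrow F$ appearing in \eqref{inclusionNNstar} — this is how $\iota$ was obtained, via the extension \eqref{eq_ext_E} — while $q_E$ is the orthogonal form $q$ on $F$ twisted by $\sO_C(2p_{2g+1})$. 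Hence $q_E\circ\iota$ is the pull-back of $q$ along that inclusion, twisted by $\sO_C(2p_{2g+1})$. Now recall from the proof of Proposition \ref{prop.defineF} that the isomorphism $q\colon F^*\to F$ is characterised by the requirement that the composition $N\otimes\sO_C(-p_{2g+1})\hookrightarrow F\xrightarrow{q} F^*\hookrightarrow N^*\otimes\sO_C(p_{2g+1})$ be the map induced by $\widetilde q$. Therefore, under the canonical isomorphism $h\cong\sO_C(2p_{2g+1})$, the form $q_E\circ\iota$ on $S^2N$ coincides with $\pi^*\widetilde q$; and since $\widetilde q$ is itself induced by $\bar q$ on $\widetilde N=V^\perp/(V\otimes\sO_{\PP^1})$, we conclude that $q_E\circ\iota$ is the pull-back to $C$ of the descent of $\bar q$ to $\widetilde N$.

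Next I would restrict to the trivial factor and compute fibrewise. By Notation \ref{notationNT} we have $T=\pi^*\widetilde T$ with $\widetilde T$ the trivial summand of $\widetilde N$; by the proof of Lemma \ref{lem_trivial_factor} it is canonically $(V^{\perp_{q_1}}\cap V^{\perp_{q_2}})/V\otimes\sO_{\PP^1}$, which Remark \ref{rem.Trivial-factor} identifies with $(T_{X,x}\otimes V)\otimes\sO_{\PP^1}$. Fix $y\in C$ with $t=\pi(y)$ and assume first $t\notin\Delta$; then $\pi$ is étale at $y$, so $(\pi^*\widetilde q)_y|_{S^2T_y}=\widetilde q_t|_{S^2\widetilde T_t}$, and $\widetilde N_t=V^{\perp_{q_t}}/V$ with $\widetilde q_t$ the form induced by $q_t$ (as $\bar q_t=q_t$). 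Under the identification above, $\widetilde T_t$ is the subspace $(V^{\perp_{q_1}}\cap V^{\perp_{q_2}})/V=T_{X,x}\otimes V$ of $\widetilde N_t$, so combining with the previous paragraph we get that $(q_E\circ\iota)_y|_{T_y}$ is exactly the restriction of $q_t$ to $T_{X,x}\otimes V$. This proves the lemma for $y\notin W$.

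Finally I would extend to all of $C$ by continuity: the quadratic form $(q_E\circ\iota)|_{S^2T}$ and the pull-back along $\pi$ of the family $t\mapsto q_t|_{T_{X,x}\otimes V}$ are both global sections of $S^2T^*\otimes h$ over $C$ (using $h\cong\sO_C(2p_{2g+1})$), and they agree on the dense open set $C\setminus W$, hence everywhere. I expect the main obstacle to be the twist bookkeeping: one must check that the isomorphism $E\otimes h^{-(g-1)}\cong F^*\otimes\alpha\otimes h^{-(g-1)}$ implicit in $q_E\circ\iota$ (and in \eqref{eq.def-theta-sigma}) is the one induced by the orthogonal structure $q\colon F^*\to F$, so that no spurious transpose enters, and that the isomorphism $h\cong\sO_C(2p_{2g+1})$ used above is compatible with the trivialisations $(\sO_C(2p_{2g+1}))_y\cong\CC$ and $\sO_{\PP^1}(1)_t\cong\CC$ implicit in the statement. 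A minor additional point is that at the Weierstra\ss\ points $\widetilde q$ degenerates whereas $\iota|_T$ remains injective (by Lemma \ref{lem.inj-iota}, when $x$ avoids the coordinate hyperplanes), which is exactly why the last step is carried out by continuity rather than by a direct fibrewise computation there.
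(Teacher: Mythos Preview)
Your proposal is correct and is essentially the detailed unwinding that the paper leaves implicit: the paper's entire argument for this lemma is the sentence ``Going through the construction one obtains'', and what you have written is precisely that construction-chasing made explicit. Your bookkeeping of the twists (identifying $q_E\circ\iota$ with $\pi^*\widetilde q$ via the characterisation of $q$ in the proof of Proposition \ref{prop.defineF}, and then restricting to the trivial factor $\widetilde T$) is the intended argument, and the continuity step to cover the Weierstra\ss\ points is a clean way to avoid a separate fibrewise check there.
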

For simplicity of notation we set
$$
q_E|_{T_y} \coloneqq  (q_E\circ \iota)_y|_{T_y}.
$$

\begin{prop}
	\label{prop.degen-theta}
	Given $0 \neq \theta\in H^0(C,\wedge^2 F\otimes K_C)^+$, denote by $H \subsetneq H^0(C,N)$ the kernel of the surjective linear form $L_{\theta}$, and let
    $$
    H_y \subset T_y \subset N_y
    $$
    the image of $H$ under the evaluation map $\operatorname{ev}_y$.
    \begin{enumerate}
		\item If $y\in C\setminus W$ and $q_E|_{H_y}$ is degenerate, then $h_{\cM}(\theta)(y)=0$.
		
		\item Assume in addition that $x\in X\subset \PP^{2g+1}$ is not contained in any coordinate hyperplanes $\{ x_i=0 \}$. Then for any $y\in C$, if $q_E|_{H_y}$ is degenerate, then $h_{\cM}(\theta)(y)=0$.
	\end{enumerate}
\end{prop}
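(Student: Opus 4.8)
The plan is to reduce the assertion to the nilpotency of the pointwise skew-symmetric map $\theta(y)$, and then to extract this nilpotency from the degeneracy of $q_E|_{H_y}$ via Lemma \ref{lem.Nilpotent-orth-decomp}. We may assume $\theta\neq 0$. By Lemma \ref{lem.rank-Higgs} the rank of $\theta$ is at most two everywhere, so $\operatorname{tr}(\wedge^i\theta)=0$ for $i\geq 3$ and $\operatorname{Pf}(\theta)=0$; hence $h_{\cM}(\theta)$ reduces to its component $\operatorname{tr}(\wedge^2\theta)\in H^0(C,K_C^2)^+$, and $h_{\cM}(\theta)(y)=0$ if and only if $\operatorname{tr}(\wedge^2\theta(y))=0$. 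Viewing $\theta(y)$ as a $K_C$-valued skew-symmetric endomorphism of the $2g$-dimensional quadratic space $\bigl((E\otimes h^{-(g-1)})_y,\,q_E\bigr)$ (transported from $(F_y,q)$ through the bundle isomorphism $E\otimes h^{-(g-1)}\cong F^*\otimes\alpha\otimes h^{-(g-1)}$; note that $q_E$ is non-degenerate at every point of $C$, including $p_{2g+1}$, because $q$ is a non-degenerate form on $F$), its non-zero eigenvalues occur in pairs $\pm\zeta$, so $\operatorname{tr}(\wedge^2\theta(y))=-\zeta^2$ (up to trivialising $(K_C^2)_y$) and thus $h_{\cM}(\theta)(y)=0$ exactly when $\theta(y)$ is nilpotent. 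There is nothing to prove if $\theta(y)=0$, so we assume $\theta(y)$ has rank exactly two.

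Next I would pin down $\ker\theta(y)$. Since $H=\ker L_\theta$, for every $\sigma\in H$ the section $\theta\circ\sigma$ of $E$ vanishes identically; evaluating the composition \eqref{eq.def-theta-sigma} at $y$ yields $\theta(y)\bigl(\iota_y(\operatorname{ev}_y\sigma)\bigr)=0$, that is $\iota_y(H_y)\subseteq\ker\theta(y)$. For the dimension count I would use that any section of $N=\pi^*\widetilde N$ comes from $H^0(\PP^1,\widetilde N)=H^0(\PP^1,\widetilde T)$ with $\widetilde T$ trivial, so $\operatorname{ev}_y\colon H^0(C,N)\to N_y$ is injective with image $T_y$ for \emph{every} $y\in C$; in particular $H_y=\operatorname{ev}_y(H)\subseteq T_y$ has dimension $2g-2$. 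If $y\notin W$ then $\iota_y$ is an isomorphism, while if $y\in W$ but $x$ lies on no coordinate hyperplane then $\iota_y|_{T_y}$ is injective by Lemma \ref{lem.inj-iota}; in both situations $\iota_y|_{H_y}$ is injective, so $\dim\iota_y(H_y)=2g-2$. Hence $\dim\ker\theta(y)\geq 2g-2$, which together with $\operatorname{rank}\theta(y)=2$ forces $\ker\theta(y)=\iota_y(H_y)$.

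To conclude I would argue by contradiction: suppose $\theta(y)$ is not nilpotent. Being a rank-two skew-symmetric map with respect to the non-degenerate form $q_E$, Lemma \ref{lem.Nilpotent-orth-decomp} provides an orthogonal splitting $(E\otimes h^{-(g-1)})_y=\ker\theta(y)\oplus\operatorname{Im}\theta(y)$, so $q_E$ restricts to a non-degenerate form on $\ker\theta(y)=\iota_y(H_y)$. But by construction (cf. the definition of $q_E\circ\iota$ preceding Lemma \ref{lemmaidentifyq}) the form $q_E|_{H_y}$ is the pull-back of $q_E|_{\iota_y(H_y)}$ along the injective map $\iota_y|_{H_y}$, hence would be non-degenerate, contradicting the hypothesis. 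Therefore $\theta(y)$ is nilpotent and $h_{\cM}(\theta)(y)=0$, which proves both (1) and (2). I expect the only genuinely delicate point to be the case $y\in W$ in part (2): there $\iota_y$ is no longer an isomorphism, and it is precisely in order to rescue the dimension identity $\dim\iota_y(H_y)=2g-2$ that one must invoke Lemma \ref{lem.inj-iota}, which is exactly where the hypothesis that $x$ avoids the coordinate hyperplanes enters.
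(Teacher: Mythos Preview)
Your proof is correct and follows essentially the same route as the paper's: reduce $h_{\cM}(\theta)(y)=0$ to nilpotency of $\theta(y)$, use the injectivity of $\iota_y|_{T_y}$ (via the exact sequence \eqref{eq_ext_E} for $y\notin W$, via Lemma \ref{lem.inj-iota} for $y\in W$) to get $\dim\iota_y(H_y)=2g-2$, and then apply Lemma \ref{lem.Nilpotent-orth-decomp} to derive a contradiction from the degeneracy of $q_E|_{H_y}$. The only cosmetic difference is that the paper, rather than first identifying $\ker\theta(y)=\iota_y(H_y)$, argues that non-degeneracy of $q_E|_{\ker\theta(y)}$ forces $\iota_y(H_y)\subsetneq\ker\theta(y)$ and hence $\operatorname{rank}\theta(y)\leq 1$; this is logically equivalent to your version.
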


\begin{proof}
Note that $h_{\cM}(\theta)(y)=0$ if and only if all the eigenvalues of $\theta(y)$ are equal to zero and if and only if $\theta(y)$ is nilpotent (see Example \ref{ex.Charac-poly-SO}). Thus we may assume to the contrary that $q_E|_{H_y}$ is degenerate and $\theta(y)$ is non-nilpotent. In particular, the rank of $\theta(y)$ is two by Lemma \ref{lem.rank-Higgs}. 

As $E = F \otimes \alpha$ we can consider $\theta$ as a morphism
$$
	E\otimes h^{-(g-1)} \longrightarrow E.
$$
that is skew-symmetric with respect to $q_E$.

In the first (resp. second) case we know by the exact sequence \eqref{eq_ext_E} (resp. by Lemma \ref{lem.inj-iota}) that the linear map 
	\[
	\iota_y|_{T_y}\colon T_y \longrightarrow (E\otimes h^{-(g-1)})_y
	\]
	is injective. So $\dim \iota_y(H_y)=2g-2$. As $H=\ker(L_{\theta})$, it follows $\theta(y)|_{\iota_y(H_y)}=0$.

	Let $K_y \subset (E\otimes h^{-(g-1)})_y$ be the kernel of $\theta(y)$. 
Lemma \ref{lem.Nilpotent-orth-decomp} gives an orthogonal decomposition
$K_y + \mbox{Im } \theta(y)$, so
bilinear algebra implies 
that the restriction $q_E|_{K_y}$ is non-degenerate. Consequently, since $q_E|_{H_y}$ is degenerate, then $H_y$ must be a proper subspace of $K_y$. In other words, $\dim K_y\geq 2g-1$ and the rank of $\theta(y)$ is at most one, which is a contradiction.
\end{proof}

From now on we assume that $x =[V] \in X\subset \PP^{2g+1}$ is not contained in any coordinate hyperplanes. Modulo the natural $\CC^*$-action, the pairing $\Psi_x$ in Proposition \ref{prop.perfect-pairing} induces a linear isomorphism
\[
\bar{\Psi}_x\colon \PP H^0(\PP^1,\widetilde{N}) = \PP H^0(C,N) \longrightarrow \PP (H^0(C,\wedge^2 F\otimes K_C)^+)^*.
\]
In particular, within the natural isomorphisms
\[
|\sO_{\PP^1}(2g-2)| = | 2 K_{\PP^1} + \Delta | = \PP (H^0(C,K_C^2)^+)^*
\]
one obtains the following diagram:
\begin{equation}
	\label{eq.varphi-h_U}
	\begin{tikzcd}[row sep=large]
		\PP T_{X,x}\cong \PP (T_{X,x}\otimes V) \arrow[dr,"{\varphi_{X}}" below]
		    & \PP H^0(\PP^1,\widetilde{N}) \arrow[r,"{\bar{\Psi}_x}"] \arrow[l,"{\overline{\operatorname{ev}}_t}" above]
		        & \PP (H^0(C,\wedge^2 F\otimes K_C)^+)^* \arrow[dl,"\bar{h}_{\cM,[\cF]}"] \\
		    & \mid \sO_{\PP^1}(2g-2)\mid 
		        & 
	\end{tikzcd} 
\end{equation}
where $t\in \PP^1$ is an arbitrary given point and $\overline{\operatorname{ev}}_t$ is an isomorphism given by the following composition
\[
\operatorname{ev}_t\colon H^0(\PP^1,\widetilde{N}) \longrightarrow \widetilde{T}_t = T_{X,x}\otimes V \subset \widetilde{N}_t.
\]

\begin{prop}
	\label{prop.Commutativity}
	The diagram \eqref{eq.varphi-h_U} is commutative.
\end{prop}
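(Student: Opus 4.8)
The plan is to verify the commutativity of \eqref{eq.varphi-h_U} on a dense open subset of $\PP H^0(\PP^1,\widetilde N)$; since all the maps involved are algebraic and $\overline{\operatorname{ev}}_t$, $\bar\Psi_x$ are isomorphisms, this is enough. So I would fix a \emph{general} hyperplane $H\subset H^0(\PP^1,\widetilde N)=H^0(C,N)$ and unwind both composites. On the left, $\overline{\operatorname{ev}}_t$ identifies $[H]$ with the subspace $H\subset T_{X,x}\otimes V$ itself (via the canonical identification $H^0(\PP^1,\widetilde T)=T_{X,x}\otimes V$), and by \eqref{formulaPhi} the image $\varphi_X([H])=s_H$ is the effective divisor of degree $2g-2$ on $\PP^1$ supported at the points $t$ with $q_t|_H$ degenerate, which for general $H$ is reduced. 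On the right, the non-degeneracy of $\Psi_x$ (Proposition \ref{prop.perfect-pairing}) makes $\bar\Psi_x([H])$ the line spanned by the unique nonzero $\theta_H\in H^0(C,\wedge^2F\otimes K_C)^+$ with $\ker L_{\theta_H}=H$, and by Theorem \ref{theorem.Image-h_U} one has $h_{\cM}(\theta_H)=\operatorname{tr}(\wedge^2\theta_H)\in H^0(C,K_C^2)^+$; writing this element as $\pi^*\bar s_H$ with $\bar s_H\in H^0(\PP^1,\sO_{\PP^1}(2g-2))$, the right-hand composite sends $[H]$ to the divisor of $\bar s_H$. Thus the proposition reduces to the equality $Z(\bar s_H)=s_H$ for general $H$.

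The step I expect to be the main obstacle is showing $\bar s_H\neq 0$, i.e. that $h_{\cM}$ does not vanish identically on the fibre $T^*_{\cM,[\cF]}$. Here the plan is as follows: since $H$ is a hyperplane, $L_{\theta_H}\neq 0$, hence $\theta_H\neq 0$, hence $\operatorname{rank}\theta_H(y)=2$ for general $y\in C$ (a nonzero skew section has rank $0$ or $2$ by Lemma \ref{lem.rank-Higgs} and is nonzero on a dense open set). I would then fix such a $y$ with moreover $y\notin W$ and $t\coloneqq\pi(y)$ not among the finitely many values at which $q_t|_H$ degenerates. Transporting $\theta_H(y)$ through the isomorphism $\iota_y\colon N_y\to (E\otimes h^{-(g-1)})_y$ of \eqref{defineiota}, its kernel contains $\iota_y(H_y)$ with $H_y=\operatorname{ev}_y(H)\subset T_y$, because $L_{\theta_H}$ kills $H$; since $\operatorname{ev}_y$ is injective on $H^0(C,N)$ and $\iota_y$ is an isomorphism, $\dim\iota_y(H_y)=2g-2=\dim\ker\theta_H(y)$, so $\ker\theta_H(y)=\iota_y(H_y)$. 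By Lemma \ref{lemmaidentifyq} the quadratic form induced on $T_y$ restricts on $H_y$ to $q_t|_H$, which is non-degenerate by the choice of $y$; hence $\ker\theta_H(y)$ meets $\ker\theta_H(y)^{\perp}=\operatorname{Im}\theta_H(y)$ trivially, so $\theta_H(y)$ restricts to an automorphism of the $2$-dimensional space $\operatorname{Im}\theta_H(y)$ and is in particular not nilpotent. Consequently $\operatorname{tr}(\wedge^2\theta_H)(y)=-\zeta^2\neq 0$ for the nonzero eigenvalue pair $\pm\zeta$ of $\theta_H(y)$ (cf. Example \ref{ex.Charac-poly-SO}), so $\bar s_H\neq 0$ and $\deg Z(\bar s_H)=2g-2$.

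Finally I would compare the two divisors. If $t_0$ is a point with $q_{t_0}|_H$ degenerate and $y_0\in\pi^{-1}(t_0)$, then Lemma \ref{lemmaidentifyq} gives that $q_E|_{H_{y_0}}$ is degenerate, and Proposition \ref{prop.degen-theta}(2) --- which applies precisely because $x$ lies on no coordinate hyperplane --- then yields $h_{\cM}(\theta_H)(y_0)=0$, i.e. $\bar s_H(t_0)=0$. Hence $\operatorname{Supp}(s_H)\subseteq\operatorname{Supp}(Z(\bar s_H))$, and since $s_H$ is reduced of degree $2g-2$ while $Z(\bar s_H)$ has degree $2g-2$, the effective divisor $Z(\bar s_H)-s_H$ vanishes, so $Z(\bar s_H)=s_H=\varphi_X([H])$. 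As this holds for general $H$, the two composites in \eqref{eq.varphi-h_U} agree as rational maps, which is the assertion. The one routine but slightly delicate point to keep track of throughout is the bookkeeping of the twists by $\alpha$, $h$ and $K_C$ (and the identifications $K_C^2\cong\pi^*\sO_{\PP^1}(2g-2)$ and $H^0(C,K_C^2)^+\cong H^0(\PP^1,\sO_{\PP^1}(2g-2))$) when passing between $\theta_H$ viewed in $H^0(C,\wedge^2F\otimes K_C)^+$ and the skew map $\theta_H(y)$ on $N_y$, following Notation \ref{notation.E} and \eqref{eq.def-theta-sigma}.
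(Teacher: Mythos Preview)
Your argument is correct and follows essentially the same route as the paper: identify $\varphi_X([H])$ via \eqref{formulaPhi}, then use Lemma \ref{lemmaidentifyq} together with Proposition \ref{prop.degen-theta} to show that $h_{\cM}(\theta_H)$ vanishes at the degeneration points $t_1,\dots,t_{2g-2}$, and conclude by the degree count. The paper simply starts from a general $\theta$ rather than a general $H$, which is the same thing through $\bar\Psi_x$. Your proof is in fact slightly more complete than the paper's: you explicitly verify that $\bar s_H\neq 0$ (equivalently that $\bar h_{\cM,[\cF]}$ is defined at the general point) via the non-nilpotency argument, whereas the paper passes directly from ``$h_{\cM}(\theta)$ vanishes at the $t_i$'' to ``$s_H=[h_{\cM}(\theta)]$'' without isolating this step.
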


\begin{proof}
By Proposition \ref{prop.Cotangent-cM}, a general element $\theta\in H^0(C,\wedge^2 F\otimes K_C)^+$ determines a point $[\theta]\in \PP T_{\cM,[\cF]}$. Let 
	\[
	H\subset H^0(C,N) = H^0(\PP^1,\widetilde{N})
	\]
	be the kernel of $L_{\theta}$. Then $\bar{\Psi}_x([H])=[\theta]$. As $\Psi_x$ is perfect, we may assume that $H$ is a general codimension one subspace of $H^0(\PP^1,\widetilde{N})$, and hence a general codimension one subspace of $T_{X,x}\otimes V$ under the isomorphism $\overline{\operatorname{ev}}_t$.

    By \eqref{formulaPhi} we have
     $$
     \varphi_X([H])=s_H,
     $$
     where $s_H \in |\sO_{\PP^1}(2g-2)|$ is the unique element
     vanishing in the points $t_1,\dots, t_{2g-2}\in \PP^1$  such that the restricted quadratic form $q_t|_H$ is degenerate if and only if $t=t_i$.
     Since $H$ is general we have $t_i \not\in \Delta$ for all $i=0, \ldots, 2g-1$.
	
	By Lemma \ref{lemmaidentifyq} the restriction $q_t|_H$ can be identified to $q_E|_{H_y}$ with $y=\pi(t)$ under the isomorphism $\operatorname{ev}_y\colon H\rightarrow H_y$. Then Proposition \ref{prop.degen-theta} says that $h_{\cM}(\theta)(y)=0$ for any $y\in C$ such that $\pi(y)=t_i$ for some $i$ and hence 
    $$
    s_H=[h_{\cM}(\theta)]=\bar{h}_{\cM}([\theta]).
    $$
\end{proof}

\begin{proof}[Proof of Theorem \ref{mainthm.Odd-dimension}]
	As $\dim H^0(C,K_C^2)^+=2g-1$, by Theorem \ref{theorem.Image-h_U} and Theorem \ref{thm.BEHLV}, we only need to prove the second statement. By \eqref{eq.varphi-h_U} and Proposition \ref{prop.Commutativity}, the following diagram
	\[
	\begin{tikzcd}[column sep=large,row sep=large]
		\PP T_X = \PP (T_X\otimes \sO_X(-1))\arrow[rr,"{\bar{\Psi}}"] \arrow[dr, "{\varphi_X}" swap]
		&   
		& \PP T_{\cM} \arrow[dl,"\bar{h}_{\cM}"] \\
		& \mid \sO_{\PP^1}(2g-2) \mid=|K_C^2|^+
		&
	\end{tikzcd}
	\]
	commutates over some non-empty open subsets and hence it commutates, which implies the second statement.
\end{proof}

\section{The even dimensional case}

Throughout this section, we let $Y\subset \PP ^{2g}$, $g\geq 2$, be a smooth complete intersection of two quadrics with equations as below:
\[
q'_1\coloneqq \sum_{j=0}^{2g} x_j^2 \quad \text{and} \quad q'_2\coloneqq \sum_{j=0}^{2g} \lambda_j x_j^2,
\]
where $\lambda_j$'s are pairwise distinct numbers. Let $\lambda_{2g+1}$ be a general complex number. Following the notation in \S\,\ref{section:twoquadrics} and \S\,\ref{s.Geometric-inter-hM}, we consider the odd dimensional complete intersection $X\subset \PP^{2g+1}=\PP U^*$ of two quadrics with the defining equations as below:
\[
q_1\coloneqq \sum_{j=0}^{2g+1} x_j^2 \quad \text{and} \quad q_2\coloneqq \sum_{j=0}^{2g+1} \lambda_j x_j^2.
\]

\subsection{Geometry of $Y$ and its cotangent bundle}

Let $\gamma \colon X\rightarrow X$ be the involution by sending $[x_0:\dots,x_{2g}:x_{2g+1}]$ to $[x_0:\dots:x_{2g}:-x_{2g+1}]$. Then $Y$ is exactly the fixed locus of $\gamma$ and $\gamma$ acts on $T_X|_Y$, which yields a splitting
\begin{equation}
\label{eq.tangent-splits}
    T_{X}|_Y = T_Y \oplus N_{Y/X}
\end{equation}
into eigenbundles for the eigenvalues $+1$ and $-1$. 

Fix an arbitrary point $y=[V]\in Y\subset \PP U^*$, where $V\subset U$ is an $1$-dimensional subspace. Let $\ell\subset U$ be the $(2g+1)$-th coordinate line and let $\bar{\ell}$ be its image in the quotient $U/V$, then $\bar{\ell}\not=0$. Recall from Remark \ref{rem.Trivial-factor} that there exists a canonical isomorphism
\[
T_{X,y}\otimes V = T_{X,[V]}\otimes V = \left(V^{\perp_{q_1}} \cap V^{\perp_{q_2}}\right)/V \subset U/V.
\]
It is straightforward to see $N_{Y/X,y}\otimes V=\bar{\ell}$ in \eqref{eq.tangent-splits}, so we obtain:

\begin{fact}
\label{fact.Annihilator}
For every $y =[V] \in Y \subset X$
    the twisted cotangent space $T^*_{Y,y}\otimes V^*$ is canonically isomorphic to the annihilator:
\[     
\bar{\ell}^{\perp}\coloneqq \left\{ w\in \left((V^{\perp_{q_1}}\cap V^{\perp_{q_2}})/V\right)^*  \mid w(\bar{\ell})=0 \right\}.     
\]
\end{fact}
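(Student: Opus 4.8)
The plan is to obtain $T^*_{Y,y}\otimes V^*$ by dualizing the eigenbundle splitting \eqref{eq.tangent-splits}, the one ingredient requiring an argument being the identification of the conormal direction $N^*_{Y/X,y}$ inside $T^*_{X,y}$. Throughout I would work with the canonical isomorphism $T_{X,y}\cong\big((V^{\perp_{q_1}}\cap V^{\perp_{q_2}})/V\big)\otimes V^*$ from Remark \ref{rem.Trivial-factor}, together with its dual, which reads $T^*_{X,y}\otimes V^*\cong\big((V^{\perp_{q_1}}\cap V^{\perp_{q_2}})/V\big)^*$ once one uses that $V$ is a line (so $V\otimes V^*\cong\CC$ canonically).

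The first step is to identify the $(-1)$-eigenbundle $N_{Y/X,y}$ in these terms. The involution $\gamma$ is the restriction to $X$ of the projective involution of $\PP U^*$ induced by the linear involution of $U$ that is $+\id$ on $\bigoplus_{j=0}^{2g}U_j$ and $-\id$ on the coordinate line $\ell=U_{2g+1}$. Since $y=[V]\in Y$ forces $V\subset\bigoplus_{j=0}^{2g}U_j$, this lift acts trivially on $V$, so the induced action on $T_{\PP U^*,y}=(U/V)\otimes V^*$ is just the action on $U/V$, whose $(-1)$-eigenspace is $\bar\ell\otimes V^*$; here $\bar\ell\subset U/V$ is the image of $\ell$, which is nonzero since $\ell\not\subset V$. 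Because $q_1$ and $q_2$ are diagonal in the coordinates $x_0,\dots,x_{2g+1}$, with $\ell$ supported on the index $2g+1$ and $V$ on the complementary indices, the line $\ell$ is $q_i$-orthogonal to $V$ for $i=1,2$; hence $\bar\ell\subset(V^{\perp_{q_1}}\cap V^{\perp_{q_2}})/V$, so the line $\bar\ell\otimes V^*$ already lies in $T_{X,y}$. Comparing with \eqref{eq.tangent-splits} I would conclude $N_{Y/X,y}=\bar\ell\otimes V^*$, equivalently $N_{Y/X,y}\otimes V=\bar\ell$.

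The last step is formal: dualizing \eqref{eq.tangent-splits} gives $T^*_{X,y}=T^*_{Y,y}\oplus N^*_{Y/X,y}$, so $T^*_{Y,y}$ is the annihilator of $N_{Y/X,y}$ inside $T^*_{X,y}$. Tensoring with $V^*$ and transporting through $T^*_{X,y}\otimes V^*\cong\big((V^{\perp_{q_1}}\cap V^{\perp_{q_2}})/V\big)^*$, the subspace $N_{Y/X,y}\otimes V=\bar\ell$ goes to $\bar\ell$ and its annihilator to $\bar\ell^{\perp}$, giving the claimed isomorphism $T^*_{Y,y}\otimes V^*\cong\bar\ell^{\perp}$; tracing the construction shows it is canonical.

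I do not expect a real obstacle: the argument is elementary linear algebra, which is why the authors can call the key identity ``straightforward''. The only place that needs care is the first step --- fixing the lift of $\gamma$ to $U$ and checking that the $(-1)$-direction $\bar\ell$ genuinely lies in $T_{X,y}$, which is exactly where it matters that $Y$ is the intersection of $X$ with a coordinate hyperplane adapted to the diagonal pencil.
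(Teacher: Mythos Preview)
Your proposal is correct and follows exactly the paper's approach: the paper simply notes that ``it is straightforward to see $N_{Y/X,y}\otimes V=\bar{\ell}$ in \eqref{eq.tangent-splits}'' and then states the Fact as an immediate consequence, while you have spelled out the details of this straightforward identification (the lift of $\gamma$ to $U$, why $\bar\ell$ lies in $T_{X,y}$, and the dualization). There is nothing to correct.
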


For any $0\leq j\leq 2g+1$, by \cite[Proposition 7.4]{BeauvilleEtesseHoeringLiuVoisin2024}, the quadratic vector field
\[
s_j\coloneq \sum_{k\not = j} \frac{(x_j\partial_k - x_k \partial_j)^2}{\lambda_k-\lambda_j}
\]
in $H^0(X,S^2 T_{\PP^{2g+1}}|_X)$ belong to the image of $H^0(X,S^2 T_X)$ and form a system
of generators for this vector space. Moreover, the splitting \eqref{eq.tangent-splits} yields a natural quotient map
\begin{equation}
\label{eq.Restriction-h-Phi}
    q_Y\colon H^0(X,S^2 T_X) \longrightarrow H^0(Y,S^2 T_Y).
\end{equation}
By \cite[Propositions 7.6 and 7.2]{BeauvilleEtesseHoeringLiuVoisin2024}, the map $q_Y$ is surjective and the kernel is generated by $s_{2g+1}$, which yields:

\begin{fact}
\label{fact.Defining-annihilator}
    The subspace $H^0(Y,S^2 T_Y)^* \subset H^0(X,S^2 T_X)^*$ is the annihilator of $s_{2g+1}$ such that the following diagram commutates:
    \[
    \begin{tikzcd}
        T^*Y \arrow[r] \arrow[d,"{\Phi_Y}" left]
            & T^*X \arrow[d,"{\Phi_X}"] \\
        H^0(Y,S^2 T_Y)^* \arrow[r,"{q^*_Y}"]
            & H^0(X,S^2 T_X)^*
    \end{tikzcd}
    \]
    where the first row is induced by the natural splitting $T_X|_Y = T_Y \oplus N_{Y/X}$.
\end{fact}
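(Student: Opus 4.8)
The plan is to read the statement off directly from the definitions of the maps involved, since both assertions are essentially dual reformulations of facts already recorded. Recall that, under the canonical identification of $H^0(X,S^\bullet T_X)$ with the algebra of fibrewise-polynomial functions on $T^*X$, the morphism $\Phi_X$ of Theorem \ref{thm.BEHLV} is the evaluation map: a cotangent vector $\xi\in T^*_{X,x}$ is sent to the functional $s\mapsto s_x(\xi)$ on $H^0(X,S^2T_X)$, where $s_x\in S^2T_{X,x}$ is viewed as a quadratic form on $T^*_{X,x}$; the same description holds for $\Phi_Y$. The top arrow $T^*Y\to T^*X$ of the diagram is, by construction, the inclusion dual to the splitting $T_X|_Y=T_Y\oplus N_{Y/X}$: a vector $\xi\in T^*_{Y,y}$ is sent to the unique $\tilde\xi\in T^*_{X,y}$ restricting to $\xi$ on $T_{Y,y}$ and vanishing on $N_{Y/X,y}$. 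In particular $T^*_{Y,y}\to T^*_{X,y}$ is injective, as $T_{X,y}\to T_{Y,y}$ is surjective by the splitting.

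For the first assertion I would argue by pure linear algebra. By \cite[Propositions 7.6 and 7.2]{BeauvilleEtesseHoeringLiuVoisin2024} the map $q_Y$ of \eqref{eq.Restriction-h-Phi} is surjective with kernel $\CC\cdot s_{2g+1}$. Dualising, $q_Y^*\colon H^0(Y,S^2T_Y)^*\hookrightarrow H^0(X,S^2T_X)^*$ is injective with image equal to the annihilator of $\ker q_Y$; this annihilator is exactly the annihilator of $s_{2g+1}$, which is the subspace in the statement.

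For the commutativity of the square, fix $y=[V]\in Y$ and $\xi\in T^*_{Y,y}$ with image $\tilde\xi\in T^*_{X,y}$ as above. For $s\in H^0(X,S^2T_X)$ we have $\Phi_X(\tilde\xi)(s)=s_y(\tilde\xi)$, while $(q_Y^*\Phi_Y(\xi))(s)=\Phi_Y(\xi)(q_Y(s))=(q_Y(s))_y(\xi)$. Decomposing $S^2T_X|_Y=S^2T_Y\oplus(T_Y\otimes N_{Y/X})\oplus S^2N_{Y/X}$ via \eqref{eq.tangent-splits}, and using that $\tilde\xi$ kills $N_{Y/X,y}$, only the $S^2T_{Y,y}$-component of $s_y$ contributes to $s_y(\tilde\xi)$; and that component is precisely $(q_Y(s))_y$, because $q_Y$ is by definition ``restrict to $Y$, then project onto the $S^2T_Y$-summand''. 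Hence $s_y(\tilde\xi)=(q_Y(s))_y(\xi)$, which is the required identity $\Phi_X\circ(\text{top row})=q_Y^*\circ\Phi_Y$.

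The argument is really bookkeeping, so no step is hard in a technical sense; the care needed is to confirm that the ``natural quotient map'' $q_Y$ imported from \cite{BeauvilleEtesseHoeringLiuVoisin2024} genuinely coincides with the fibrewise ``restrict-and-project'' operation at every point of $Y$, and that the first horizontal arrow of the diagram is the one dual to the splitting $T_X|_Y=T_Y\oplus N_{Y/X}$ --- both being true by construction but worth spelling out. It is also useful to keep in mind the pointwise picture of Fact \ref{fact.Annihilator}, which makes visible that $\tilde\xi$ annihilates the line $N_{Y/X,y}\otimes V$, the source of the vanishing used above.
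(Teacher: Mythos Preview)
Your proposal is correct and matches the paper's approach: the paper states this as a \emph{Fact} without proof, merely noting that it ``yields'' from the surjectivity of $q_Y$ and the identification $\ker q_Y=\CC\cdot s_{2g+1}$ established in \cite[Propositions~7.6 and~7.2]{BeauvilleEtesseHoeringLiuVoisin2024}. You have supplied exactly the linear-algebra dualisation and the pointwise verification of commutativity that the paper leaves implicit.
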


\subsection{Proof of Theorem \ref{mainthm.Evendim}}

As explained in \S\,\ref{ss.Quotient-Isom}, the isomorphism $X\cong \cM$ is $\Upsilon$-equivariant. Since we have chosen $\alpha = h^{g-1} \otimes \sO_C(p_{2g+1})$,
the involution $\gamma \colon X\rightarrow X$ corresponds to $\alpha$ by Lemma \ref{lem.PartitionW}. Yet $\alpha$ acts on $\cM$ as the natural involution $i\colon \cM \rightarrow \cM$ by \eqref{eq.Gamma-action-cM}, so we obtain:

\begin{fact}
    There exists an isomorphism $Y\cong \cM^i$.
\end{fact}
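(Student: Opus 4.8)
The plan is to deduce the fact formally from the $\Upsilon$-equivariance of Ramanan's isomorphism \eqref{eq.Ramanan-Isom}, once $\gamma$ has been identified with the natural involution $i$ on $\cM$ through the group $\Upsilon$. Recall from \S\,\ref{ss.Quotient-Isom} that $\Upsilon\cong(\Z_2)^{2g+1}$ acts on $X\subset\PP U^*$ by sign changes of the homogeneous coordinates (with $\Upsilon^+$ the subgroup changing an even number of them), that this action is compatible with the isomorphism $X\cong\cM$, and that on the $\cM$-side $\Upsilon$ is realised via \eqref{eq.Gamma-action-cM} after the set-theoretic identification $\Upsilon\cong J_2(C)\cup P$ of Lemma \ref{lem.PartitionW}, where $P$ is the set of $i$-invariant line bundles $\xi$ with $\xi^2\cong h^{2g-1}$.

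First I would observe that $\gamma$, which changes the sign of the single coordinate $x_{2g+1}$, is the element of $\Upsilon$ attached to the ordered partition $W=(W\setminus\{p_{2g+1}\})\cup\{p_{2g+1}\}$; this has odd parts, so by Lemma \ref{lem.PartitionW} it corresponds to an odd-degree $i$-invariant line bundle whose natural $i$-linearisation acts by $+1$ on the fibres over $p_j$ for $j\neq 2g+1$ and by $-1$ on the fibre over $p_{2g+1}$. I would then check that $\alpha=h^{g-1}\otimes\sO_C(p_{2g+1})$ is exactly this bundle: it has odd degree $2g-1$ and $\alpha^2\cong h^{2g-2}\otimes\sO_C(2p_{2g+1})\cong h^{2g-1}$ (since $\sO_C(2p_{2g+1})\cong\pi^*\sO_{\PP^1}(1)=h$ at the ramification point $p_{2g+1}$); the factor $h^{g-1}$ is pulled back from $\PP^1$ and hence carries the trivial $i$-linearisation on each Weierstra\ss\ fibre, while for $\sO_C(p_{2g+1})$, choosing a local coordinate $u$ at $p_{2g+1}$ with $i^*u=-u$, the generator $u^{-1}$ satisfies $i^*(u^{-1})=-u^{-1}$, which gives the eigenvalue $-1$ over $p_{2g+1}$ and $+1$ over the remaining $p_j$ for the linearisation normalised by a section non-vanishing there. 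Thus $\gamma$ corresponds to $\alpha\in P\subset\Upsilon$, which is precisely the choice built into Setup \ref{setup-odd}.

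To conclude, I would invoke \eqref{eq.Gamma-action-cM}, where it is recorded that the element $\alpha\in P$ acts on $\cM$ by $[\cF]\mapsto[i^*\cF]$, i.e. as the natural involution $i\colon\cM\to\cM$. Since \eqref{eq.Ramanan-Isom} is $\Upsilon$-equivariant it therefore intertwines $\gamma$ and $i$, and so restricts to an isomorphism between the closed fixed subschemes $Y=\Fix(\gamma)\subset X$ and $\cM^i=\Fix(i)\subset\cM$ (note that $Y$ is indeed the whole $\gamma$-fixed locus of $X$, the isolated fixed point $[0:\cdots:0:1]$ of $\gamma$ on $\PP^{2g+1}$ not lying on $X$). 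Hence $Y\cong\cM^i$.

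The only step requiring genuine care is the bookkeeping in the second paragraph --- tracking the $i$-linearisation on $\alpha$ and matching the resulting partition of $W$ with the coordinate $x_{2g+1}$; everything else is a formal consequence of equivariance. In practice this matching is exactly what the choice of $\alpha$ in Setup \ref{setup-odd} was designed to ensure, as already recorded in the discussion following \eqref{eq.Isom-X-M}, so in the write-up this paragraph can be kept short.
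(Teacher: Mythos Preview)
Your proposal is correct and follows exactly the same argument as the paper: use the $\Upsilon$-equivariance of Ramanan's isomorphism, identify $\gamma$ with $\alpha\in P$ via Lemma~\ref{lem.PartitionW}, and invoke that $\alpha$ acts on $\cM$ as $[\cF]\mapsto[i^*\cF]$. The paper records this in a single short paragraph preceding the Fact, while you spell out in more detail the verification that $\alpha$ has the correct $i$-linearisation type; as you yourself anticipate, this can indeed be kept brief.
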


In the following we aim to describe the cotangent space of $\cM^i$. Fix an arbitrary point $y=[V]\in Y\subset \PP U^*$. Let $[\cF]\in \cM^i$ be the corresponding $\Spin_{2g}$-bundle and $(F,g)$ the associated orthogonal bundle. 

Recall that $\ell \subset U$ is the $(2g+1)$-th coordinate line.
Then $\bar{\ell}\otimes \sO_{\PP^1}\subset \widetilde{N}$ and we denote by $L$ the pull-back $\pi^* \bar{\ell}\otimes \sO_{\PP^1}$. By \eqref{defineiota} the morphism
\[
\iota \colon N \longrightarrow E\otimes h^{-(g-1)}
\]
is an isomorphism outside $W$. Moreover the restriction over $p_{2g+1}$ has rank $2g-1$ and induces an exact sequence of vector spaces
\begin{equation}
    \label{iota2g+1}
    0\longrightarrow L_{p_{2g+1}} \longrightarrow N_{p_{2g+1}} \stackrel{\iota_{p_{2g+1}}}{\longrightarrow} (E\otimes h^{-(g-1)})_{p_{2g+1}}.
\end{equation}
Let $\bar{L}$ be the saturation of $\iota(L)$ in $E\otimes h^{-(g-1)}$. Moreover, we have:

\begin{lemma}
\label{lem.barL-i-anti-inv}
    There exists an isomorphism $\bar{L}\cong \sO_{C}(p_{2g+1})$ and we have
    \[
    \bar{L}_{p_{2g+1}} = (E\otimes h^{-(g-1)})_{p_{2g+1}}^- = E_{p_{2g+1}}^-\otimes h^{-(g-1)}_{p_{2g-1}}.
    \]
\end{lemma}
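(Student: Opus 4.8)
The plan is to exhibit $\bar L$ as a one-point elementary modification of the trivial bundle $L\cong\sO_C$, and then to pin down its fibre at $p_{2g+1}$ using the $i$-equivariance of $\iota$ together with the fact that $p_{2g+1}$ is a Weierstra\ss\ point.

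First I would note that $L=\pi^*(\bar\ell\otimes\sO_{\PP^1})\cong\sO_C$, and that the composition $L\hookrightarrow N\to E\otimes h^{-(g-1)}$ (with second arrow $\iota$) is a nonzero morphism out of a line bundle, hence injective, since $\iota$ is an isomorphism over $C\setminus W$ by \eqref{defineiota}; so $\iota(L)\cong\sO_C$ and $\bar L$ is its saturation in $E\otimes h^{-(g-1)}$. To compute $\bar L$ I would locate the zeros of a local generator $\iota(\sigma)$ of $\iota(L)$. Outside $W$ there are none. For $p_j$ with $j\neq 2g+1$, the kernel of $\iota_{p_j}$ is the image $\bar\ell_j\subset U/V$ of the $j$-th coordinate line (as in the proof of Lemma \ref{lem.inj-iota}), while the fibre $L_{p_j}$ equals $\bar\ell_{2g+1}$; these are distinct, for $\bar\ell_j=\bar\ell_{2g+1}$ in $U/V$ would force $V\subset\langle e_j,e_{2g+1}\rangle$, which is impossible since $\lambda_j\neq\lambda_{2g+1}$ (a line $[ae_j+be_{2g+1}]$ isotropic for both quadrics would satisfy $a^2+b^2=\lambda_j a^2+\lambda_{2g+1}b^2=0$, forcing $a=b=0$). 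Hence $\iota(\sigma)(p_j)\neq 0$. At $p_{2g+1}$, the sequence \eqref{iota2g+1} gives $L_{p_{2g+1}}=\ker\iota_{p_{2g+1}}$, so $\iota(\sigma)$ vanishes there, and twisting \eqref{eq_ext_E} by $h^{-(g-1)}$ shows $\operatorname{coker}(\iota)$ has length one at $p_{2g+1}$; since the kernel direction of $\iota_{p_{2g+1}}$ is precisely the direction of $L$, the elementary divisors of $\iota$ at $p_{2g+1}$ are $(z,1,\dots,1)$, so $\iota(\sigma)$ vanishes there to order exactly one. Consequently $\iota(L)=\bar L(-p_{2g+1})$, that is, $\bar L\cong\iota(L)\otimes\sO_C(p_{2g+1})\cong\sO_C(p_{2g+1})$.

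Next I would identify the fibre. Since the inclusion $N\otimes\sO_C(-p_{2g+1})\subset F$ of \eqref{inclusionNNstar}, and hence $\iota$, are $i$-equivariant, and $L$ is pulled back from $\PP^1$ so that $i$ acts trivially on $L_{p_{2g+1}}$, the generator $\sigma$ may be chosen $i$-invariant near $p_{2g+1}$; then $\iota(\sigma)$ is $i$-invariant too. Write $\iota(\sigma)=z\,\eta$ with $z$ a local coordinate at $p_{2g+1}$ and $\eta$ a local section with $\eta(p_{2g+1})\neq 0$ generating $\bar L$. As $p_{2g+1}$ is a Weierstra\ss\ point, $i^*z=-z$, so $i$-invariance of $z\eta$ forces $i^*\eta=-\eta$, and thus $\eta(p_{2g+1})$ lies in the $(-1)$-eigenspace $(E\otimes h^{-(g-1)})^-_{p_{2g+1}}$. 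This eigenspace is one-dimensional, because $h^{-(g-1)}$ is pulled back from $\PP^1$, hence $i$-invariant at $p_{2g+1}$, and $E^-_{p_{2g+1}}$ is one-dimensional (cf.\ \eqref{eq_ext_E}, using that $F$ has type $\tau=(1^{2g+1},2g-1)$). Therefore $\bar L_{p_{2g+1}}=(E\otimes h^{-(g-1)})^-_{p_{2g+1}}=E^-_{p_{2g+1}}\otimes h^{-(g-1)}_{p_{2g+1}}$, as claimed.

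The hard part will be the local analysis of $\iota(\sigma)$ along $W$: that it is nonvanishing at each $p_j$ with $j\neq 2g+1$ rests on the input that $X$ meets no coordinate line $\PP\langle e_j,e_{2g+1}\rangle$ (which holds since the $\lambda_j$ are pairwise distinct), and that it vanishes to order exactly one at $p_{2g+1}$ rests on the length-one computation for $\operatorname{coker}(\iota)$ extracted from \eqref{eq_ext_E}. The remaining ingredients — injectivity of the sheaf maps, the elementary-modification identification of $\bar L$, and the eigenspace bookkeeping — are routine.
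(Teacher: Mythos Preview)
Your proof is correct and reaches the same conclusion, but it proceeds by a different mechanism than the paper's argument. The paper does not analyse the behaviour of $\iota|_L$ at every Weierstra\ss\ point; instead it observes only that the map $L\to E\otimes h^{-(g-1)}$ vanishes at $p_{2g+1}$ (from \eqref{iota2g+1}), giving $\deg(\bar L)\geq 1$, and then invokes the semistability of $E\otimes h^{-(g-1)}$ (which has slope $1$, cf.\ Proposition \ref{prop.defineF}) to force $\deg(\bar L)\leq 1$, hence $\bar L\cong\sO_C(p_{2g+1})$. Your approach replaces this semistability input by a direct elementary check that $\iota(\sigma)$ is nonzero at each $p_j$ with $j\neq 2g+1$ and vanishes to order exactly one at $p_{2g+1}$; this is longer but self-contained and does not depend on Proposition \ref{prop.defineF}. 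Note also that your argument for $\bar\ell_j\neq\bar\ell_{2g+1}$ is slightly stronger than what is needed here: since $y=[V]\in Y\subset\{x_{2g+1}=0\}$, the inclusion $V\subset\langle e_j,e_{2g+1}\rangle$ is already impossible, without appeal to the isotropy equations. For the fibre identification the paper simply asserts that $i$ acts on $\bar L_{p_{2g+1}}$ as $-1$ (implicitly from $\bar L\cong\sO_C(p_{2g+1})$ and the action of $i$ on the fibre of this line bundle at the Weierstra\ss\ point), whereas your local computation with $\iota(\sigma)=z\eta$ spells out the same reasoning explicitly.
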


\begin{proof}
Since the line bundle $L \simeq \sO_C$ is given by $(2g+1)$-th coordinate line,
the morphism $L \rightarrow \bar L \subset E \otimes h^{-(g-1)}$ 
vanishes exactly in $p_{2g+1}$.
Since $E \otimes h^{-(g-1)}$ has slope one and is semistable by Proposition \ref{prop.defineF}, we have $\deg(\bar{L})\leq 1$ and hence $\bar{L}\cong \sO_C(p_{2g+1})$. Now the second statement follows from the fact that $\bar{L}$ is a line subbunle of $E\otimes h^{-(g-1)}$ and $i$ acts on $\bar{L}_{p_{2g+1}}$ as $-1$.
\end{proof}

The line subbundle $\bar{L}=\sO_C(p_{2g+1})$ of $E\otimes h^{-(g-1)}$ induces a line subbundle $\sO_C$ of $F=E\otimes \alpha$. In particular, Lemma \ref{lem.barL-i-anti-inv} induces a natural identification
\[
\sO_{C}\otimes \sO_{p_{2g+1}} = F^+_{p_{2g+1}.}
\]
Thus the restricted quadratic form $q|_{\sO_C}\colon S^2\sO_C\rightarrow \sO_C$ is non-degenerate as $[\ell]\not\in X$. Let $F'$ be the orthogonal complement of $\sO_C$ in $F$ with respect to $q$. Then we have an orthogonal decomposition
\begin{equation}
    \label{eq.decomp-F}
    F = F'\oplus \sO_C.
\end{equation}

By Fact \ref{fact.Annihilator} and the perfect pairing \eqref{eq.pefect-pairing}, the cotangent space $T^*_{\cM^i,[\cF]}$ is isomorphic to the annihilator of $H^0(C,L)$, i.e.
\begin{equation}
    \label{eq.Cotangent-Mi}
    T_{\cM^i,[\cF]}^*\cong \left\{\theta\in H^0(C,\wedge^2 F\otimes K_C)^+\mid \Psi_y(H^0(C,L),\theta)=0\right\}\eqqcolon L^{\perp}.
\end{equation}

By \eqref{eq.def-theta-sigma} the pairing $\Psi_y(H^0(C,L),\theta)$ is nothing but the restriction of the skew-symmetric map 
\[
\theta\colon F\cong F^*\longrightarrow F\otimes K_C
\]
to the line subbundle $\sO_C$ of $F$. On the other hand, the decomposition \eqref{eq.decomp-F} induces a decomposition
\begin{equation}
\label{eq.Decomp-wedge2F}
    H^0(C,\wedge^2 F\otimes K_C)^+ = H^0(C,\wedge^2 F' \otimes K_C)^+ \oplus H^0(C,F' \otimes \sO_C \otimes K_C)^+.
\end{equation}
Thus the annihilator $L^{\perp}$ is exactly the kernel of the following linear projection 
\begin{equation}
    \label{eq.proj-annil}
    H^0(C,\wedge^2 F\otimes K_C)^+ \longrightarrow H^0(C,F'\otimes K_C)^+,
\end{equation}
which is surjective such that $\dim H^0(C,F'\otimes K_C)^+=1$.

\begin{lemma}
\label{lem.nonvanishing}
    There exists an element $\theta\in H^0(C,\wedge^2 F\otimes K_C)^+$ such that $\theta(p_{2g+1})\not=0$.
\end{lemma}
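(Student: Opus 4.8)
The plan is to reduce the statement to a transversality property of the pencil of quadrics at the point $[V]\in Y$, and then extract that property from Theorem \ref{mainthm.Odd-dimension} together with the geometric description \eqref{formulaPhi} of $\Phi_X$.

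First, by the type computation the fibre $F'_{p_{2g+1}}$ is entirely contained in the $(-1)$-eigenspace of the lifted $i$-action (its type at $p_{2g+1}$ equals its rank $2g-1$), so $(\wedge^2 F'\otimes K_C)^+_{p_{2g+1}}=0$ while $(F'\otimes\sO_C\otimes K_C)^+_{p_{2g+1}}$ has dimension $2g-1$. Hence, writing $\theta=\theta'+\eta$ according to \eqref{eq.Decomp-wedge2F}, one has $\theta(p_{2g+1})=\eta(p_{2g+1})$; since $H^0(C,F'\otimes K_C)^+$ is one-dimensional and \eqref{eq.proj-annil} is surjective with kernel $L^\perp$, it suffices to find one $\theta\notin L^\perp$ with $\theta(p_{2g+1})\neq 0$, and I will show this for a generic $\theta\notin L^\perp$. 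In fact I aim for the stronger conclusion $h_{\cM}(\theta)(p_{2g+1})\neq 0$: by Theorem \ref{theorem.Image-h_U} and Example \ref{ex.Charac-poly-SO} we have $h_{\cM}(\theta)(p_{2g+1})=\operatorname{tr}(\wedge^2\theta(p_{2g+1}))$, which vanishes whenever $\theta(p_{2g+1})=0$, so $h_{\cM}(\theta)(p_{2g+1})\neq 0$ forces $\theta(p_{2g+1})\neq 0$.

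Take $\theta$ generic. Via Proposition \ref{prop.Cotangent-cM} and the perfect pairing of Proposition \ref{prop.perfect-pairing} it corresponds to a cotangent vector $v^*\in T^*_{X,[V]}$, and genericity means $H\coloneqq\ker(v^*)\subset T_{X,[V]}$ is a generic hyperplane; in particular it does not contain the normal line $N_{Y/X,[V]}\otimes V=\bar\ell$, the image of the $(2g+1)$-st coordinate line in $U/V$ (cf. Fact \ref{fact.Annihilator} and \eqref{eq.tangent-splits}). By Theorem \ref{mainthm.Odd-dimension}, $h_{\cM}(\theta)=\Phi_X(v^*)$, and under the identification of the target with $|\sO_{\PP^1}(2g-2)|$ formula \eqref{formulaPhi} gives $\Phi_X(v^*)=s_H$, the unique member vanishing exactly at the $t$ for which $q_t|_H$ is degenerate. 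Since $\pi(p_{2g+1})=r_{2g+1}$, it remains to prove that $q_{r_{2g+1}}|_H$ is nondegenerate. Now $\bar\ell$ lies in the radical of $q_{r_{2g+1}}$ restricted to $T_{X,[V]}\otimes V=(T_{Y,[V]}\otimes V)\oplus\bar\ell$, because $\bar\ell$ is the image of the vertex $\ell_{2g+1}$ of $q_{r_{2g+1}}$ (Remark \ref{rem.Trivial-factor}). Since $\lambda_{2g+1}$ is general, $r_{2g+1}$ is not a branch point of the hyperelliptic curve of $Y$, so $Q'_{r_{2g+1}}=\{\lambda_{2g+1}q'_1-q'_2=0\}\subset\PP^{2g}$ is a smooth quadric through $[V]$, and genericity of $\lambda_{2g+1}$ ensures that the restriction of $q'_{r_{2g+1}}$ to the hyperplane $T_{Y,[V]}\otimes V\subset V^{\perp_{q'_{r_{2g+1}}}}/V$ is nondegenerate (i.e.\ $r_{2g+1}$ avoids the discriminant of the pencil $\{q'_t|_{T_{Y,[V]}\otimes V}\}_t$). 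As $q_{r_{2g+1}}$ restricts to $q'_{r_{2g+1}}$ on $\{x_{2g+1}=0\}$, the elementary fact that $\operatorname{rad}(q|_{W_0})=0$ together with $\bar\ell\subset\operatorname{rad}(q|_{W_0\oplus\bar\ell})$ forces $\operatorname{rad}(q|_{W_0\oplus\bar\ell})=\bar\ell$ shows that $\operatorname{rad}(q_{r_{2g+1}}|_{T_{X,[V]}\otimes V})=\bar\ell$. Hence the hyperplane $H$, being transverse to this one-dimensional radical, carries a nondegenerate form $q_{r_{2g+1}}|_H$, as required.

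The main obstacle is exactly the nondegeneracy of $q'_{r_{2g+1}}|_{T_{Y,[V]}\otimes V}$: to have it for every $[V]\in Y$ one must choose the general parameter $\lambda_{2g+1}$ outside the image in $\PP^1$ of the relative discriminant locus $\{([V],t):q'_t|_{T_{Y,[V]}\otimes V}\ \text{degenerate}\}\subset Y\times\PP^1$, and for the finitely many $[V]$ lying over a branch value $r_j$ (where $q'_{r_j}$ is already singular) a separate direct check is needed; alternatively one propagates the resulting equality of cotangent spaces to all of $\cM^i$ by semicontinuity, using that $\dim T^*_{\cM^i,[\cF]}=\dim Y=2g-2$ is constant. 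A further routine point of care is the bookkeeping of the $i$-linearizations on $K_C$, on $\alpha$ and on $\sO_C(-p_{2g+1})$ in the identifications above.
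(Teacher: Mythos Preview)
Your reduction in the first paragraph is correct, but the strengthening to $h_{\cM}(\theta)(p_{2g+1})\neq 0$ introduces a genuine gap: this stronger assertion actually fails on a hypersurface in $Y$, so it cannot establish the lemma at every $[V]$. Concretely, combining Theorem \ref{mainthm.Odd-dimension} with Fact \ref{fact.Defining-annihilator} one finds, up to a nonzero scalar, $h_{\cM}(\theta)(p_{2g+1})=\langle\Phi_X(v^*),s_{2g+1}\rangle=s_{2g+1}|_{[V]}(v^*,v^*)$. Restricting the explicit formula for $s_{2g+1}$ to $Y=\{x_{2g+1}=0\}$ gives
\[
s_{2g+1}|_Y=\Bigl(\sum_{k=0}^{2g}\tfrac{x_k^2}{\lambda_k-\lambda_{2g+1}}\Bigr)\,\partial_{2g+1}^2,
\]
and the scalar factor is a quadric that does not lie in the pencil $\langle q'_1,q'_2\rangle$. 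Its zero locus therefore meets $Y$ in a nonempty hypersurface, and at any $[V]$ on that hypersurface $s_{2g+1}|_{[V]}=0$, forcing $h_{\cM}(\theta)(p_{2g+1})=0$ for \emph{every} $\theta$. In particular the projection of your relative discriminant locus to $\PP^1$ is all of $\PP^1$, so no choice of $\lambda_{2g+1}$ avoids it. Your semicontinuity alternative does not rescue the argument either: $\dim H^0(C,\wedge^2 F\otimes K_C(-p_{2g+1}))^+$ is \emph{upper} semicontinuous in $[V]$, and its value $2g-2$ at a generic point does not preclude a jump to $2g-1$ elsewhere, which is exactly what you must exclude.

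The paper's proof avoids $h_{\cM}$ entirely and stays internal to the bundle. After your own reduction one must show that the unique (up to scaling) nonzero $\eta\in H^0(C,F'\otimes K_C)^+$ satisfies $\eta(p_{2g+1})\neq 0$. Since $F'\otimes K_C$ is $i$-invariant one has $H^0(C,F'\otimes K_C)^+=H^0(\PP^1,\pi_*(F'\otimes K_C)^+)$; a vector bundle on $\PP^1$ whose space of global sections is one-dimensional necessarily has an $\sO_{\PP^1}$ summand, so this section is nowhere vanishing on $\PP^1$. Finally, because $i$ acts as $+1$ on $(F'\otimes K_C)_{p_{2g+1}}$, the map $\pi^*\bigl(\pi_*(F'\otimes K_C)^+\bigr)\to F'\otimes K_C$ is an isomorphism over $p_{2g+1}$, whence $\eta(p_{2g+1})\neq 0$. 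This argument is uniform in $[V]\in Y$.
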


\begin{proof}
    By the decompositions \eqref{eq.decomp-F} and \eqref{eq.Decomp-wedge2F}, it is enough to show that there exists an element $\theta\in H^0(C,F'\otimes K_C)^+$ such that $\theta(p_{2g+1})\not=0$. Since $F'\otimes K_C$ is $i$-invariant, we consider the following decomposition of vector bundles:
    \[
    \pi_*(F'\otimes K_C) = \pi_*(F'\otimes K_C)^+ \oplus \pi_*(F'\otimes K_C)^-.
    \]
    Then we get
    \[
    \dim H^0(C,F'\otimes K_C)^+ = \dim H^0(\PP^1, \pi_*(F'\otimes K_C)^+) = 1.
    \]
    Since $\pi_*(F'\otimes K_C)^+$ is a vector bundle on $\PP^1$ with a unique section, it has a direct factor $\sO_{\PP^1}$. In particular the section does not vanish.
     Finally note that $i$ acts on $(F'\otimes K_C)_{p_{2g+1}}$ as $+1$, so the natural map
    \[
    \pi^*\left(\pi_*(F'\otimes K_C)^+\right) \longrightarrow F'\otimes K_C
    \]
    is an isomorphism at $p_{2g+1}$ and hence the unique (up to scaling) non-zero element $\theta\in H^0(C,F'\otimes K_C)^+$ does not vanish at $p_{2g+1}$.
\end{proof}

\begin{proposition}
\label{prop.Cotangent-cMi}
    There exists a canonical isomorphism
    \[
    T^*_{\cM^i,[\cF]} \cong H^0(C,\wedge^2 F\otimes K_C\otimes \sO_C(-p_{2g+1}))^+.
    \]
\end{proposition}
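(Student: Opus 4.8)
The plan is to read off $T^*_{\cM^i,[\cF]}$ from what has already been established and then rewrite it using the orthogonal splitting $F=F'\oplus\sO_C$ of \eqref{eq.decomp-F}. By \eqref{eq.Cotangent-Mi} the cotangent space is identified with $L^\perp$, which by the discussion around \eqref{eq.proj-annil} together with the decomposition \eqref{eq.Decomp-wedge2F} is precisely the kernel of the projection onto $H^0(C,F'\otimes K_C)^+$, namely $H^0(C,\wedge^2 F'\otimes K_C)^+$. So it suffices to produce a canonical isomorphism $H^0(C,\wedge^2 F'\otimes K_C)^+\cong H^0(C,\wedge^2 F\otimes K_C\otimes\sO_C(-p_{2g+1}))^+$.

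First I would decompose the right-hand side using $\wedge^2 F=\wedge^2 F'\oplus(F'\otimes\sO_C)$, which is $i$-equivariant since $F=F'\oplus\sO_C$ is, so that
\[
H^0(C,\wedge^2 F\otimes K_C\otimes\sO_C(-p_{2g+1}))^+=H^0(C,\wedge^2 F'\otimes K_C\otimes\sO_C(-p_{2g+1}))^+\oplus H^0(C,F'\otimes K_C\otimes\sO_C(-p_{2g+1}))^+.
\]
The second summand vanishes: it embeds into the one-dimensional space $H^0(C,F'\otimes K_C)^+$ as the subspace of sections vanishing at $p_{2g+1}$, and by (the proof of) Lemma \ref{lem.nonvanishing} the generator of that space is nonzero at $p_{2g+1}$. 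For the first summand I would show it equals $H^0(C,\wedge^2 F'\otimes K_C)^+$, that is, every $i$-invariant section of $\wedge^2 F'\otimes K_C$ vanishes at $p_{2g+1}$ automatically. This is the one genuinely new computation, and it is where the type enters: since $\sO_C\otimes\sO_{p_{2g+1}}=F^+_{p_{2g+1}}$ and $(F,q)$ has type $\tau=(1^{2g+1},2g-1)$, the complementary fibre $F'_{p_{2g+1}}$ is precisely $F^-_{p_{2g+1}}$ (being $i$-invariant and complementary to $F^+_{p_{2g+1}}$), so $i$ acts by $+1$ on $\wedge^2 F'_{p_{2g+1}}$; tensoring with $K_C|_{p_{2g+1}}$, on which $i$ acts by $-1$ because $p_{2g+1}$ is a Weierstra\ss\ point, shows $i$ acts by $-1$ on the whole fibre $(\wedge^2 F'\otimes K_C)|_{p_{2g+1}}$, whence no nonzero $i$-invariant section can be nonzero there.

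Combining the two points gives $H^0(C,\wedge^2 F\otimes K_C\otimes\sO_C(-p_{2g+1}))^+=H^0(C,\wedge^2 F'\otimes K_C)^+\cong T^*_{\cM^i,[\cF]}$, and the identification is canonical because the line subbundle $\sO_C\subset F$, hence $F'$, is intrinsically determined by $[\cF]$ through the $(2g+1)$-th coordinate line. The only step requiring real care is the fibrewise $i$-eigenvalue computation at $p_{2g+1}$; the rest is bookkeeping with splittings already in place, and one may check for reassurance that both sides have dimension $2g-2=\dim Y$.
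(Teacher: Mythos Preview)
Your proof is correct, but the route is genuinely different from the paper's. The paper does not exploit the splitting $\wedge^2 F=\wedge^2 F'\oplus F'$ any further; instead it argues directly that every $\theta\in L^\perp$ vanishes at $p_{2g+1}$ by a rank argument in the spirit of Lemma \ref{lem.rank-Higgs}: for $\theta\in L^\perp$ the fibre $\bar L_{p_{2g+1}}$ lies in $\ker\theta(p_{2g+1})$, and since $y\in\{x_{2g+1}=0\}$ the image $\iota(N_{p_{2g+1}})$ (a $(2g-2)$-dimensional subspace transverse to $\bar L_{p_{2g+1}}$) is also contained in this kernel, whence $\theta(p_{2g+1})$ has rank at most one and therefore zero by parity. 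This gives $L^\perp\subset H^0(C,\wedge^2 F\otimes K_C(-p_{2g+1}))^+$, and equality follows from the dimension count using Lemma \ref{lem.nonvanishing}.

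Your approach bypasses the rank trick entirely in favour of a fibrewise $i$-eigenvalue computation at $p_{2g+1}$: from $F'_{p_{2g+1}}=F^-_{p_{2g+1}}$ (which you justify correctly, since $F^-_{p_{2g+1}}$ is the $q$-orthogonal of $F^+_{p_{2g+1}}$) you read off that $i$ acts by $-1$ on $(\wedge^2 F'\otimes K_C)_{p_{2g+1}}$, forcing the vanishing of $+$-invariant sections there. This is a cleaner, more representation-theoretic argument that does not appeal to the geometry of $\iota$ or to the evaluation \eqref{eq.Embedding-H0(C,E)-}. The price is that you must also dispose of the cross term $H^0(C,F'\otimes K_C(-p_{2g+1}))^+$, which you do via Lemma \ref{lem.nonvanishing}; the paper uses that lemma only implicitly for the final dimension match. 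Both proofs therefore rely on Lemma \ref{lem.nonvanishing}, but for different purposes.
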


\begin{proof}
    By \eqref{eq.Cotangent-Mi}, the cotangent space $T_{\cM^i,[\cF]}^*$ is isomorphic to $L^{\perp}$, which is a codimension one subspace of $H^0(C,\wedge^2 F\otimes K_C)^+$. 
    Thus we have to show that if $\theta\in L^{\perp}$ is a non-zero element, then $\theta$ vanishes at $p_{2g+1}$:
since $\theta \in L^\perp$ the composition
    \[
    \bar{L} \longrightarrow E\otimes h^{-(g-1)} \xrightarrow{\theta} E
    \]
    vanishes identically, thus $\bar{L}_{p_{2g+1}}$ is contained in the kernel of $\theta(p_{2g+1})$. 

    On the other hand, since $y=[V]$ is contained in $\{x_{2g+1}=0\}$, it follows from \eqref{eq.Embedding-H0(C,E)-} that the evaluation map
    \[
    H^0(C,E)^- \longrightarrow E_{p_{2g+1}}^-
    \]
    is zero. In particular, the image $\iota(N_{p_{2g+1}})$ is a $2g-2$ subspace 
    of the kernel of $\theta(p_{2g+1})$.
    
    Recall that $\bar{L}$ is the saturation of $\iota(L)$ in $E\otimes h^{-(g-1)}$.
    Then \eqref{iota2g+1} shows that $\bar{L}_{p_{2g+1}}$ is not contained in  $\iota(N_{p_{2g+1}})$.
    Therefore the kernel of $\theta(p_{2g+1})$ has dimension at least $2g-1$, i.e.
    $\theta(p_{2g+1})$ has rank at most one.
     Yet the rank of $\theta(p_{2g+1})$ is even, so we have $\theta(p_{2g+1})=0$.
\end{proof}

\begin{proof}[Proof of Theorem \ref{mainthm.Evendim}]
Consider the following commutative diagram given by Theorem \ref{mainthm.Odd-dimension}:
\begin{equation}
\label{eq.MainThm-Odd-dim-comm-diag}
    \begin{tikzcd}[row sep=large, column sep=large]
        T^*X \arrow[r,  "\Psi"] \arrow[d,"{\Phi_X}" left] 
            & T^*\cM \arrow[d,"{h_{\cM}}"] \\
        H^0(X,S^2 T_X)^* \arrow[r,"\cong"]
            & H^0(\PP^1,K_{\PP^1}^2\otimes \sO_{\PP^1}(\Delta))
    \end{tikzcd}
\end{equation}
    The splitting \eqref{eq.tangent-splits} and the surjective map \eqref{eq.Restriction-h-Phi} yields a restriction $\Phi_X$ to $T^*Y$ such that
    \[
    \Phi_Y=\Phi_X|_{T^*Y}\colon T^*Y \longrightarrow H^0(Y,S^2 T_Y)^* \subset H^0(X,S^2 T_X)^*.
    \]
    Similarly, it follows from \eqref{eq.Cotangent-Mi} and Proposition \ref{prop.Cotangent-cMi} that the restriction of $h_{\cM}$ to $T^*\cM^i$ gives a map
    \[
    h_{\cM^i} = h_{\cM}|_{T^*\cM^i}\colon T^*\cM^i \longrightarrow H^0(C,K_C^2\otimes \sO_C(-p_{2g+1}))^+,
    \]
    where
    \[
    H^0(C,K_C^2\otimes \sO_C(-p_{2g+1}))^+\cong H^0(\PP^1,K_{\PP^1}^2\otimes \sO_{\PP^1}(\Delta-r_{2g+1}))\cong \CC^{2g-2}.
    \]
We summarise the construction as follows (see Fact \ref{fact.Defining-annihilator}):
    \begin{equation}
    \label{eq.Restriction-inv-locus}
        \adjustbox{scale=0.73,left}{%
\begin{tikzcd}[row sep=large, column sep=large]
    T^*X  \arrow[d,"{\Phi_X}" left] \arrow[rrr,bend left = 20,"\Psi"]
        & T^*Y \arrow[l] \arrow[r,"{\Psi|_{T^*Y}}"] \arrow[d,"{\Phi_Y}" left]
            & T^*\cM^i \arrow[r] \arrow[d,"{h_{\cM^i}}"]
                & T^*\cM \arrow[d,"{h_{\cM}}"] \\
    H^0(X,S^2 T_X)^* \arrow[rrr,bend right = 20, "\cong"]
        & H^0(Y,S^2 T_Y)^* \arrow[l] \arrow[r,"\cong"]
           & H^0(\PP^1,K_{\PP^1}^2\otimes \sO_{\PP^1}(\Delta-r_{2g+1})) \arrow[r]
               & H^0(\PP^1,K_{\PP^1}^2\otimes \sO_{\PP^1}(\Delta))
\end{tikzcd}
}
    \end{equation}
Since \eqref{eq.MainThm-Odd-dim-comm-diag} commutes, so does its restriction
\eqref{eq.Restriction-inv-locus}.
\end{proof}


\begin{thebibliography}{BEH{\etalchar{+}}24}

\bibitem[BEH{\etalchar{+}}24]{BeauvilleEtesseHoeringLiuVoisin2024}
Arnaud Beauville, Antoine Etesse, Andreas H{\"o}ring, Jie Liu, and Claire
  Voisin.
\newblock Symmetric tensors on the intersection of two quadrics and
  {L}agrangian fibration.
\newblock {\em Moduli}, 1:e4, 2024.

\bibitem[BHK10]{BiswasHollaKumar2010}
Indranil Biswas, Yogish~I. Holla, and Chanchal Kumar.
\newblock On moduli spaces of parabolic vector bundles of rank 2 over
  {$\mathbb{P}^1$}.
\newblock {\em Michigan Math. J.}, 59(2):467--475, 2010.

\bibitem[Bho84]{Bhosle1984}
Usha~N. Bhosle.
\newblock Moduli of orthogonal and spin bundles over hyperelliptic curves.
\newblock {\em Compositio Math.}, 51(1):15--40, 1984.

\bibitem[Cas15]{Casagrande2015}
Cinzia Casagrande.
\newblock Rank {$2$} quasiparabolic vector bundles on {$\mathbb{P}^1$} and the
  variety of linear subspaces contained in two odd-dimensional quadrics.
\newblock {\em Math. Z.}, 280(3-4):981--988, 2015.

\bibitem[DPS24]{DPS24}
Ron Donagi, Tony Pantev, and Carlos Simpson.
\newblock Twistor hecke eigensheaves in genus 2.
\newblock {\em arXiv preprint arXiv:2403.17045}, 2024.

\bibitem[DR76]{DesaleRamanan1976}
Usha~V. Desale and Sundararaman Ramanan.
\newblock Classification of vector bundles of rank {$2$} on hyperelliptic
  curves.
\newblock {\em Invent. Math.}, 38(2):161--185, 1976.

\bibitem[Fal93]{Faltings1993}
Gerd Faltings.
\newblock Stable {$G$}-bundles and projective connections.
\newblock {\em J. Algebraic Geom.}, 2(3):507--568, 1993.

\bibitem[FH91]{FH91}
William Fulton and Joe Harris.
\newblock {\em Representation theory}, volume 129 of {\em Graduate Texts in
  Mathematics}.
\newblock Springer-Verlag, New York, 1991.
\newblock A first course, Readings in Mathematics.

\bibitem[Har77]{Har77}
Robin Hartshorne.
\newblock {\em Algebraic geometry}.
\newblock Springer-Verlag, New York, 1977.
\newblock Graduate Texts in Mathematics, No. 52.

\bibitem[Hit87]{Hit87}
Nigel Hitchin.
\newblock Stable bundles and integrable systems.
\newblock {\em Duke Math. J.}, 54:91--114, 1987.

\bibitem[Hit24]{Hit24}
Nigel Hitchin.
\newblock Spinor-valued {H}iggs fields.
\newblock {\em arXiv preprint arXiv:2404.12981}, 2024.

\bibitem[Hit25]{HitLaum}
Nigel Hitchin.
\newblock Remarks on the intersection of two quadrics.
\newblock {\em preprint, communicated to the authors by N. Hitchin}, 2025.

\bibitem[KL22]{KL22}
Hosung Kim and Yongnam Lee.
\newblock Lagrangian fibration structure on the cotangent bundle of a del
  {P}ezzo surface of degree 4.
\newblock {\em arXiv preprint}, 2022.

\bibitem[LZ24]{LiZhang2024}
Yanjie Li and Shizhuo Zhang.
\newblock Linear subspaces of the intersection of two quadrics via {K}uznetsov
  component.
\newblock {\em arXiv preprint arXiv:2401.00677}, 2024.

\bibitem[Mei13]{Mei13}
Eckhard Meinrenken.
\newblock {\em Clifford algebras and {Lie} theory}, volume~58 of {\em Ergeb.
  Math. Grenzgeb., 3. Folge}.
\newblock Berlin: Springer, 2013.

\bibitem[New68]{Newstead1968}
Peter~E. Newstead.
\newblock Stable bundles of rank {$2$} and odd degree over a curve of genus
  {$2$}.
\newblock {\em Topology}, 7:205--215, 1968.

\bibitem[Oxb98]{Oxbury1998}
William Oxbury.
\newblock Prym varieties and the moduli of spin bundles.
\newblock In {\em Algebraic geometry ({C}atania, 1993/{B}arcelona, 1994)},
  volume 200 of {\em Lecture Notes in Pure and Appl. Math.}, pages 351--376.
  Dekker, New York, 1998.

\bibitem[Oxb99]{Oxbury1999}
William Oxbury.
\newblock Spin {V}erlinde spaces and {P}rym theta functions.
\newblock {\em Proc. London Math. Soc. (3)}, 78(1):52--76, 1999.

\bibitem[Ram81]{Ram1981}
Sundararaman Ramanan.
\newblock Orthogonal and spin bundles over hyperelliptic curves.
\newblock {\em Proc. Indian Acad. Sci. Math. Sci.}, 90(2):151--166, 1981.

\bibitem[Ram96a]{Ramanathan1996}
Annamalai Ramanathan.
\newblock Moduli for principal bundles over algebraic curves. {I}.
\newblock {\em Proc. Indian Acad. Sci. Math. Sci.}, 106(3):301--328, 1996.

\bibitem[Ram96b]{Ramanathan1996a}
Annamalai Ramanathan.
\newblock Moduli for principal bundles over algebraic curves. {II}.
\newblock {\em Proc. Indian Acad. Sci. Math. Sci.}, 106(4):421--449, 1996.

\bibitem[Ser12]{Serman2012}
Olivier Serman.
\newblock Orthogonal and symplectic bundles on curves and quiver
  representations.
\newblock In {\em Geometric methods in representation theory. {II}}, volume
  24-II of {\em S\'emin. Congr.}, pages 393--418. Soc. Math. France, Paris,
  2012.

\bibitem[Zel22]{Zelaci2022}
Hacen Zelaci.
\newblock Hitchin systems for invariant and anti-invariant vector bundles.
\newblock {\em Trans. Amer. Math. Soc.}, 375(5):3665--3711, 2022.

\end{thebibliography}

\newcommand{\etalchar}[1]{$^{#1}$}

\end{document}